\documentclass[reqno,11pt]{amsart}

\usepackage{amsmath, amssymb}
\usepackage[utf8]{inputenc}
\usepackage[T1]{fontenc}

\usepackage{xcolor}
\usepackage{graphicx}
\usepackage[hidelinks]{hyperref}
\usepackage{amscd}

\usepackage{amsthm}

\usepackage{latexsym,array}
\usepackage{amsfonts}
\usepackage{amsbsy}
\usepackage{amssymb}
\usepackage{dsfont}
\usepackage{mathrsfs}
\usepackage{mathtools}
\usepackage{enumitem}
\usepackage{float}
\usepackage{tikz}
\usepackage{tikz-cd}

\usepackage{caption}
\usepackage{subcaption}

\usepackage{color}
\usepackage{graphicx}
\usepackage[hidelinks]{hyperref}
\usepackage{cleveref}
\usepackage{fullpage}
\usepackage{comment}

\usepackage{dsfont}

\usepackage{cleveref}

\numberwithin{equation}{section}

\newcommand{\dom}{\text{dom}}

\newcommand{\BR}{\mathbb{R}}
\newcommand{\E}{\mathcal{E}}

\newtheorem{theorem}{Theorem}[section]
\newtheorem{lemma}[theorem]{Lemma}

\newtheorem{proposition}[theorem]{Proposition}

\theoremstyle{definition}
\newtheorem{remark}[theorem]{{\bf Remark}}
\newtheorem{definition}[theorem]{Definition}
\newtheorem{problem}[theorem]{Problem}

\newcommand{\midbar}{ \ \big\vert \ }
\newcommand{\p}[1]{\left( #1 \right)}

\newcommand{\set}[1]{\left\{ #1 \right\}}
\newcommand{\modulo}[1]{\left| #1 \right|}
\newcommand{\norm}[1]{\left\| #1 \right\|}
\newcommand{\dif}{\hspace*{0.5mm}d\hspace*{0.25mm}}

\newcommand{\Spin}{\textup{Spin}}
\newcommand{\spin}{\mathfrak{spin}}

\newcommand{\Sc}[1]{\textup{Sc}\p{#1}}

\newcommand{\dvol}{\: d\textup{vol}}
\newcommand{\inner}[1]{\left\langle  #1 \right\rangle }

\renewcommand{\Im}{\mathrm{Im}}

\usepackage{amscd}
\usepackage{tikz-cd}
\usepackage{tikz}

\crefname{enumi}{}{}
\crefname{enumii}{}{}

\title[]{The $S$-resolvent estimates for the Spinor Dirac operator on manifolds with boundary conditions}

\author[I. Beschastnyi]{Ivan Beschastnyi}
\address{(IB)
	Charge de Recherche, Centre Inria d'Université Côte d'Azur
} \email{ ivan.beschastnyi@inria.fr}

\author[F. Colombo]{Fabrizio Colombo}
\address{(FC)
	Politecnico di Milano\\Dipartimento di Matematica\\Via E. Bonardi, 9\\20133
	Milano, Italy}
\email{fabrizio.colombo@polimi.it}

\author[Simão Andrade Lucas]{Simão Andrade Lucas}
\address{(SL)
	Politecnico di Milano\\Dipartimento di Matematica\\Via E. Bonardi, 9\\20133
	Milano, Italy
} \email{simao.lucas@polimi.it}

\author[I. Sabadini]{Irene Sabadini}
\address{(IS)
	Politecnico di Milano\\Dipartimento di Matematica\\Via E. Bonardi, 9\\20133
	Milano, Italy
} \email{irene.sabadini@polimi.it}

\date{}

\begin{document}
\maketitle

\begin{abstract}
The aim of this paper is to show that the spectral theory based on the S-spectrum is particularly well suited for the Dirac operator on manifolds, even in cases where the operator is not self adjoint.
Traditionally, for non-self adjoint operators in the Clifford setting, the literature has often referred to the right spectrum. However, a more comprehensive approach is provided by the theory of the $S$-spectrum, which is the appropriate notion for general operators on Clifford modules. In this work, we show that this theory is particularly well suited for bisectorial Clifford operators.
By using the $S$-spectrum, which naturally contains the right eigenvalues, we prove bisectorial estimates for the $S$-resolvent associated with the spinor Dirac operator under various boundary conditions.
\end{abstract}

\vskip 1cm
\par\noindent
 AMS Classification: 47A10, 58J50.
\par\noindent
\noindent {\em Key words}: $S$-spectrum, spinor Dirac operator on manifold, Spectral properties depending on scalar curvature,
Schr\"odinger-Bochner-Weitzenb\"ock theorem.

\medskip
\textbf{Acknowledgements:} F. Colombo and I. Sabadini are supported by MUR grant Dipartimento di Eccellenza 2023-2027. I. Beschastnyi was supported by the French government through the France 2030 investment plan managed by the National Research Agency (ANR), as part of the Initiative of Excellence Université Côte d’Azur under reference number ANR-15-IDEX-01


\section{Introduction}\label{INTROD}

In this paper we present some recent results related to the spectral theory
based on the $S$-spectrum when applied to the Dirac operator on manifolds with homogeneous boundary conditions
of Dirichlet and Robin-like type.
The spectral theory on the $S$-spectrum works in general Clifford Banach modules, though we specifically formulate our problems
 within Clifford Hilbert modules, but we do not assume that the Dirac operator is necessarily self adjoint.
 This work extends the results we recently proved for the Dirac operator in hyperbolic and spherical spaces, see \cite{beschastnyi2025s}.

\medskip
The spectral theory on the $S$-spectrum represents a significant development in operator theory,
with its discovery  described in the introduction of \cite{CGK}.
Its development is associated with the theory of slice hyperholomorphic functions.
These theories are collected in numerous works and organized in the books
\cite{ACS2016,AlpayColSab2020,AlpayColSab2024,FJBOOK,CGK,ColomboSabadiniStruppa2011}.
We remark that the spectral theorems for quaternionic and Clifford operators are also based on the notion of $S$-spectrum, see \cite{ACK,ColKim}. Moreover, as described in \cite{ADVCGKS}, the $S$-spectrum generalizes far beyond the conventional quaternionic and Clifford frameworks.

\medskip
An important application of this spectral theory is within
differential geometry and the spectral theory of the Dirac operator on manifolds.
This is well studied topic in the literature were there are several contributions from different authors
and without claim completeness we referee the reader to the books \cite{Bar91,Bbw93,Cno02,Esp98,Fri00,Gin09,Hua06,Rod16} and the references therein.
In this paper we offer the point of view of the new spectral theory on the $S$-spectrum continuing
what we started in \cite{beschastnyi2025s} regarding the spectral properties in terms
 of the $S$-spectrum of the Spherical Dirac operator $\mathcal{D}_S$  and of the Hyperbolic Dirac operator $\mathcal{D}_H$.

The Dirac operator $\mathcal{D}_S$ on the spherical space, which is computed  for example in \cite{gilbert1991clifford},
and by choosing the representation given by the left multiplication in the Clifford algebra, it terns out to be
$$
\mathcal{D}_S=(1+|x|^2)\sum_{i=1}^ne_i\partial_{x_i}-nx,
$$
while the Dirac operator on the hyperbolic space, with the same representation, becomes:
\begin{equation}
		\mathcal{D}_H = \sum_{i=1}^{n-1} e_iy\partial_{x_i} - \alpha_ne_{n} + e_{n}y\partial_y,\ \ \ {\rm where} \ \ \   \alpha_n:= \frac{n-1}{2}.
	\end{equation}
In \cite{beschastnyi2025s} we study the spectral properties of $\mathcal{D}_S$ and $\mathcal{D}_H$
homogeneous Dirichlet or Robin-like boundary conditions.

\medskip
In this paper we investigate some spectral properties of the Dirac operator
on a general manifold with boundary and we associate homogeneous Dirichlet or
Robin-like boundary conditions.
Specifically, we revisit the essential concepts related to the description of Dirac operators on manifolds
and the fact that one can  write the Dirac operator in terms of the covariant derivative $\nabla_{ X}^\tau$.  Let $U$ be a coordinate neighborhood of a Riemannian manifold $(M,g)$ of dimension $n$. In this neighborhood we can find an orthonormal frame of vector fields $E_1,\dots,E_n$. Then the Dirac operator on $(U,g)$ can be written as a differential operator on
the space of sections as
\begin{equation}
\mathcal{D} = \sum_{i=1}^ne_i\nabla^\tau_{E_i}.
\end{equation}

In order to apply the spectral theory on the $S$-spectrum we need
to recall that $\mathcal{D}^2$ is given as the sum of a second-order Laplacian and a curvature operator, in accordance with the
Schr\"odinger-Bochner-Weitzenb\"ock theorem.

 \medskip
The spectral theory on the $S$-spectrum has fundamental differences compared to classical complex spectral theory, because it is derived from the classical resolvent series expansion, without assuming that the operator and the spectral parameter commute. Specifically, for bounded operators $\mathcal{T}:V\rightarrow V$ acting in a Clifford module $V$, the series expansion of the $S$-resolvent becomes
\begin{equation}\label{Eq_Resolvent_sum}
\sum_{n=0}^\infty \mathcal{T}^ns^{-n-1}=(\mathcal{T}^2-2s_0\mathcal{T}+|s|^2\mathcal{I})^{-1}
(\overline{s}\mathcal{I}-\mathcal{T}),\qquad|s|>\Vert \mathcal{T}\Vert,
\end{equation}
where $\mathcal{I}$ denotes the identity operator, $s=s_0+s_1e_1+\dots+s_ne_n$ is a paravector, $|s|$ is its modulus and $\overline{s}$ the conjugate of $s$. The reason why the value of the series \eqref{Eq_Resolvent_sum} does not simplify to the classical resolvent operator $(s\mathcal{I}-\mathcal{T})^{-1}$ is due to the noncommutativity $s\mathcal{T}\neq \mathcal{T}s$. The explicit value of the sum in equation \eqref{Eq_Resolvent_sum} motivates that, even for unbounded, closed operators $\mathcal{T}$, the spectrum must be associated with the invertibility of the operator
\begin{equation*}
Q_s[\mathcal{T}]:=\mathcal{T}^2-2s_0\mathcal{T}+|s|^2\mathcal{I},\qquad\text{with }\dom(Q_s[\mathcal{T}])=\dom(\mathcal{T}^2).
\end{equation*}
This leads us to the definitions of the \textit{$S$-resolvent set} and the \textit{$S$-spectrum}
\begin{equation*}
\rho_S(\mathcal{T}):=\big\{s\in\mathbb{R}^{n+1}\;\big|\;Q_s[\mathcal{T}]^{-1}\in
\mathcal{B}(V)\big\}\qquad\text{and}\qquad\sigma_S(\mathcal{T}):=\mathbb{R}^{n+1}\setminus\rho_S(\mathcal{T}),
\end{equation*}
where $\mathcal{B}(V)$ is the space of all bounded operators from $V$ into itself (see \cite{CGK,ColomboSabadiniStruppa2011} for more details). Motivated by \eqref{Eq_Resolvent_sum}, we define for every $s\in\rho_S(\mathcal{T})$ the \textit{left} and the \textit{right $S$-resolvent operators}
\begin{equation}\label{Eq_SL_SR}
S_L^{-1}(s,\mathcal{T}):=Q_s[\mathcal{T}]^{-1}\overline{s}-\mathcal{T}Q_s[\mathcal{T}]^{-1}
\qquad\text{and}\qquad S_R^{-1}(s,\mathcal{T}):=(\overline{s}\mathcal{I}-\mathcal{T})Q_s[\mathcal{T}]^{-1},
\end{equation}
which are then be used in the definition of the $S$-functional calculus
\begin{equation}\label{Eq_S_functional_calculus}
f(\mathcal{T}):=\frac{1}{2\pi}\int_{\partial (U\cap\mathbb{C}_J)}S_L^{-1}(s,\mathcal{T})ds_Jf(s)\quad\text{and}\quad f(\mathcal{T}):=\frac{1}{2\pi}\int_{\partial (U\cap\mathbb{C}_J)}f(s)ds_JS_R^{-1}(s,\mathcal{T}),
\end{equation}
for left, (resp. right) slice holomorphic functions $f$. This is the Clifford analogue of the Riesz-Dunford functional calculus.
 Observe that for unbounded operators $\mathcal{T}: {\rm dom}(\mathcal{T})\subset V\to V$ we have that
 $ Q_s(\mathcal{T}):  {\rm dom}(Q_s(\mathcal{T}))\subset {\rm dom}(\mathcal{T})\to V$ so that the $S$-resolvent operators are defined on $V$.

\medskip
 Sectorial, bi-sectorial or strip-operators in the Clifford setting can be defined in analogy to complex operators,
 but in this case the estimates are associated with the $S$-resolvent operators:
\begin{equation*}
\Vert S_L^{-1}(s,\mathcal{T})\Vert\leq\frac{C}{|s|}\qquad\text{and}\qquad\Vert S_R^{-1}(s,\mathcal{T})\Vert\leq\frac{C}{|s|},
\end{equation*}
where the parameter $s$ belongs to suitable sectorial, bi-sectorial or strip-type subsets of $\rho_S(\mathcal{T})$.

\medskip
{\em The content of the paper and description of the main results.}
This paper is intended for a dual audience of researchers, those working in spectral theory on the $S$-spectrum and those focused
on differential geometry and the spectral theory of the Dirac operator.
In Section \ref{Clifford algebras} we give the preliminaries on Clifford algebras
while Section \ref{DIRAC-Manifold} provides the necessary background on the Dirac operator on manifolds.
This foundational material is then used to formulate the spectral problems addressed in the subsequent sections.

In Section \ref{RESOLV-hom-dir}, we establish the $S$-resolvent estimates for the spinor Dirac operator subject to homogeneous Dirichlet boundary conditions. We investigate the spectral properties of the Dirac operator $D$ on a compact spin manifold $(M, g)$ with boundary $\partial M$, where the operator acting on the space of smooth sections $C^\infty(M,\mathcal{E})$ is defined locally by \eqref{Eq:General_Dirac} where $\mathcal{E}$  is the $\mathbb{R}_n$-module where the functions of interest takes values.
Using the Schr\"odinger-Bochner-Weitzenb\"ock formula
$$
D^2 = -\Delta_\tau + \frac{1}{4}\kappa(x),
$$
 which relates the square of the Dirac operator to the connection Laplacian $\Delta_\tau$ and the scalar curvature $\kappa(x)$, we define the spectral problem with homogeneous Dirichlet boundary conditions as follows:
\begin{equation*}
    \begin{cases}
        (D^2 - 2s_0 D + |s|^2 \mathcal{I})F = f & \text{in } M, \\
        F|_{\partial M} = 0,
    \end{cases}
\end{equation*}
for a given $f \in L^2(M,\mathcal{E})$. By applying a Green's-like formula to the Laplacian component, we derive a weak formulation associated with the bilinear form:
\begin{equation*}
    q_s(F,G) = \inner{\nabla^\tau F, \nabla^\tau G}_2 + \frac{1}{4}\inner{\kappa F, G}_2 - 2s_0\inner{DF, G}_2 + |s|^2\inner{F, G}_2,
\end{equation*}
defined on $\dom (q_s) := H_0^1(M,\mathcal E)\times H_0^1(M,\mathcal E)$. The central result of this section establishes the well-posedness of this problem and provides the $S$-resolvent estimates.

In Section \ref{RESOLV-hom-dir}
we establish the $S$-resolvent estimates with Dirichlet boundary conditions
with non-negative scalar curvature. Precisely, in Theorem \ref{Th:Main_general}
we assume that $(M,g)$ is a compact spin manifold with boundary with scalar curvature $\kappa$,
with
$$
k_{\min}:=\frac{1}{4}\min_{x\in M}\kappa(x).
$$  Under the hypothesis
\begin{equation*}
   |s|^{2}+k_{\min}-|s_{0}|^{2}C_{n}^{2} \;> 0,
\end{equation*}
 for every $f\in L^2(M,\mathcal{E})$ we show that there exists a unique weak solution $F_f\in H_0^{1}(M,\mathcal{E})$, and the $S$-resolvent satisfies the estimate
\begin{equation*}
   \norm{S_{R}^{-1}(s,D)f}_{2}\leq\frac{1}{\sqrt{|s|^{2}+k_{\min}}-|s_{0}|\sqrt{n}}\Bigl(\sqrt{n}+\frac{|s|}{\sqrt{|s|^{2}+k_{\min}}}\Bigr)\norm{f}_{2}.
\end{equation*}
In Section \ref{POINCARESECT} we consider the $S$-resolvent estimates using Poincar\'e inequality and
we distinguish the nonnegative curvature case in Theorem \ref{Th:Main_general_nonneg_POIN}, while
 the case with negative scalar curvature  is treated in Theorem \ref{Th:Main_general_neg_POIN}.

Finally, in Section \ref{ROBINSECTION}, we study the elliptic boundary value problem subject to homogeneous Robin-type boundary conditions in (\ref{EQ:BVP_ROBIN}), that is:
\begin{equation*}
    \begin{cases}
        ( D^2 - 2s_0 D + \modulo{s}^2 \mathcal{I})F = f, \quad \text{in } M \\
        (\nabla_N^\tau F + bF)\vert_{\partial M} = 0,
    \end{cases}
\end{equation*}
where $N$ is the unit normal vector field to the boundary, and $b$ is a given bounded real-valued function. Our objective is to investigate the invertibility of the operator $Q_s(D)$ in the weak sense. Specifically, we determine the conditions for which the problem \eqref{EQ:BVP_ROBIN} admits a unique weak solution for every $f \in L^2(M, \mathcal{E})$ and we show the estimates of the $S$-resolvent operator.

\medskip
We conclude by observing that Theorem~\ref{Th:Main_general}
remains valid if $(M,g)$ is taken to be a compact spin manifold without boundary (i.e., a closed manifold). In this setting, the Sobolev space $H_0^1(M,\mathcal{E})$ naturally identifies with the standard Sobolev space $H^1(M,\mathcal{E})$ since the boundary trace contributions vanish identically. Consequently, all the corresponding arguments involving the coercivity of the bilinear form $q_s$ apply directly, yielding identical $S$-resolvent estimates to those obtained in~\eqref{quatSresorig_DIR_EST_new}.

\section{Clifford algebras}\label{Clifford algebras}

To fix the notations of this paper, let us discuss the definition and main properties of Clifford algebras over the reals. They form a class of algebras that generalize the concept of complex numbers and quaternions. They are formed from a vector space equipped with a quadratic form, where the key feature is the relation between the basis vectors, typically expressed through anticommutation relations. In the following we denote by  $\mathbb{R}_n$, for $n\in\mathbb{N}$, $n\geq 2$ the
Clifford algebra generated by $n$ imaginary units $e_1,\dots,e_n$ which satisfy the relations
\begin{equation*}
e_i^2=-1\qquad\text{and}\qquad e_ie_j=-e_je_i,\qquad i\neq j\in\{1,\dots,n\}.
\end{equation*}
More precisely, $\mathbb{R}_n$ is given by
\begin{equation}
\mathbb{R}_n:=\left\{\sum\nolimits_{A\in\mathcal{A}}x_Ae_A\ \  |\ \  x_A\in\mathbb{R},\,A\in\mathcal{A}\right\},
\end{equation}
using the index set
\begin{equation*}
\mathcal{A}:=\{(i_1,\dots,i_r)\  | \ r\in\{0,\dots,n\},\ \ 1\leq i_1<\dots<i_r\leq n\},
\end{equation*}
and the basis vectors $e_A:=e_{i_1}\dots e_{i_r}$. Note, for $A=\emptyset$ the empty product of imaginary units is the real number $e_\emptyset:=1$. Moreover, we will consider the set of all paravectors
\begin{equation*}
\mathbb{R}^{n+1}:=\left\{x_0+\sum\nolimits_{i=1}^nx_ie_i \ \ |\ \  x_0,x_1,\dots,x_n\in\mathbb{R}\right\}.
\end{equation*}
For any Clifford number $x\in\mathbb{R}_n$, we define
\begin{align*}
{\rm Sc}(x)&:=x_\emptyset=x_0, && \textit{(scalar part)} \\
\overline{x}&:=\sum\nolimits_{A\in\mathcal{A}}x_A\overline{e_A}, && \textit{(conjugate)} \\
|x|:=|x|_2&:=\Big(\sum\nolimits_{A\in\mathcal{A}}|x_A|^2\Big)^{\frac{1}{2}}=({\rm Sc}(x\overline{x}))^{\frac{1}{2}}=({\rm Sc}(\overline{x}x))^{\frac{1}{2}}, && \textit{(norm)}
\end{align*}
where $\overline{e_A}=\overline{e_{i_r}}\dots\overline{e_{i_1}}$ and $\overline{e_i}=-e_i$.
The sphere of purely imaginary paravectors with modulus $1$ is defined by
\begin{equation}\label{Eq_S}
\mathbb{S}:=\big\{s\in\mathbb{R}^{n+1}\;\big|\;s_0=0,\,|s|=1\big\}.
\end{equation}
Any element $J\in\mathbb{S}$ satisfies $J^2=-1$ and hence the corresponding hyperplane
\begin{equation*}
\mathbb{C}_J:=\big\{x+Jy\;\big|\;x,y\in\mathbb{R}\big\}
\end{equation*}
is an isomorphic copy of the complex numbers. Moreover, for every paravector $s\in\mathbb{R}^{n+1}$ we consider the corresponding  $(n-1)$-sphere
\begin{equation*}
[s]:=\big\{x_0+J|\Im(s)|\;\big|\;J\in\mathbb{S}\big\}.
\end{equation*}
A subset $U\subseteq\mathbb{R}^{n+1}$ is called axially symmetric, if $[s]\subseteq U$ for every $s\in U$. \medskip

\begin{remark}
We remark that in some computations in order to avoid confusion the Euclidean norm $|x|$ of $x\in \mathbb{R}_n$ will be denoted by $|x|_2$.
\end{remark}
It is now obvious that for any Clifford number $x\in\mathbb{R}_n$ one can calculate its coefficients $x_A$ by
\begin{equation}\label{Eq_xA}
{\rm Sc}(x\overline{e_A})={\rm Sc}\Big(\sum\nolimits_{B\in\mathcal{A}}x_Be_B\overline{e_A}\Big)=\sum\nolimits_{B\in\mathcal{A}}x_B{\rm Sc}(e_B\overline{e_A})=x_A,
\end{equation}
where in the last equation we used that
\begin{equation}\label{Eq_SceBeA}
{\rm Sc}(e_B\overline{e_A})=\begin{cases} 1, & \text{if }B=A, \\ 0, & \text{if }B\neq A. \end{cases}
\end{equation}

Clifford (Hilbert) modules arise in the context of Clifford algebras. Specifically, a Clifford module is a vector space equipped with a linear action of a Clifford algebra.
This action is typically defined such that the elements of the Clifford algebra act on the module in a way that respects the algebra's multiplication rules.
Clifford modules are particularly important to provide a framework for studying the representations of the Dirac operator.
For any real Hilbert space $V_\mathbb{R}$ with inner product $\langle\cdot,\cdot\rangle_\mathbb{R}$ and norm $\Vert\cdot\Vert_\mathbb{R}^2=\langle\cdot,\cdot\rangle_\mathbb{R}$, we define the Clifford- Hilbert module
\begin{equation*}
V:=\left\{\sum\nolimits_{A\in\mathcal{A}}F_A\otimes e_A\ \  |\ \  F_A\in V_\mathbb{R}\right\}.
\end{equation*}
For any vector $F=\sum_{A\in\mathcal{A}}F_A\otimes e_A\in V$ and any Clifford number $x=\sum_{A\in\mathcal{A}}x_Ae_A\in\mathbb{R}_n$, we equip the module $V$ with a left and a right scalar multiplication
\begin{subequations}
\begin{align}
xF:=&\sum\nolimits_{A,B\in\mathcal{A}}(x_BF_A)\otimes(e_Be_A), && \textit{(left-multiplication)} \\
Fx:=&\sum\nolimits_{A,B\in\mathcal{A}}(F_Ax_B)\otimes(e_Ae_B). && \textit{(right-multiplication)} \label{Eq_Right_multiplication}
\end{align}
\end{subequations}
The element $F=\sum_{A\in\mathcal{A}}F_A\otimes e_A\in V$ is usually
denoted by
$
F=\sum_{A\in\mathcal{A}}F_A e_A
$
for simplicity.
Moreover, we define the \textit{inner product}
\begin{equation}\label{Eq_Inner_product}
\langle F,G\rangle :=\overline{F}G=\sum\nolimits_{A,B\in\mathcal{A}}\langle F_A,G_B\rangle_\mathbb{R}\,\overline{e_A}e_B,\qquad F,G\in V,
\end{equation}
The sesquilinear form \eqref{Eq_Inner_product} is clearly right-linear in the second, and right-antilinear in the first argument, i.e. for every $F,G,H\in V$, $x\in\mathbb{R}_n$, there holds
\begin{align*}
\langle F,G+H\rangle &=\langle F,G\rangle +\langle F,H\rangle , && \langle G,Hx\rangle =\langle G,H\rangle  x, \\
\langle F+H,G\rangle &=\langle F,G\rangle +\langle H,G\rangle , && \langle Gx,H\rangle =\overline{x}\langle G,H\rangle .
\end{align*}
Moreover, the following property also holds
\begin{equation}\label{Eq_Inner_product_property}
\langle G,xH\rangle =\langle\overline{x}G,H\rangle .
\end{equation}
From the inner product (\ref{Eq_Inner_product}),
taking the scalar part gives
\begin{equation*}
{\rm Sc}\langle F,G\rangle :={\rm Sc}(\overline{F}G)={\rm Sc}\Big(\sum\nolimits_{A,B\in\mathcal{A}}\langle  F_A,G_B\rangle_\mathbb{R}\,\overline{e_A}e_B\Big),\qquad F,G\in V,
\end{equation*}
and setting $G=F$ we get the norm
\begin{equation}\label{Eq_Norm}
\Vert F\Vert :=\big({\rm Sc}\langle F,F\rangle \big)^{\frac{1}{2}}=
\Big(\sum\nolimits_{A\in\mathcal{A}}\Vert F_A\Vert_\mathbb{R}^2\Big)^{\frac{1}{2}}
,\qquad F\in V.
\end{equation}

\begin{remark}
In the vector space $V$, we consider two inner products that serve different purposes:
\begin{itemize}
\item[(I)]
 $\langle \cdot,\cdot\rangle : V \times V \to \mathbb{R}_n $ is the one in (\ref{Eq_Inner_product}),
 \item[(II)]
   $\mathrm{Sc}\langle \cdot,\cdot\rangle : V \times V \to \mathbb{R}$  is the scalar part of
   $ \langle \cdot,\cdot\rangle $.
  \end{itemize}
 These two inner products serve for different purposes.
The inner product $\langle \cdot,\cdot\rangle $ is used in the
 Riesz's representation theorem:
 every linear and continuous functional $\varphi: V \to \mathbb{R}_n$  is
  represented by an element $F_\varphi \in V$ such that
  $\varphi(G) = \langle F_\varphi, G \rangle $ for every $G\in V$.
The inner product $\mathrm{Sc}\langle \cdot,\cdot\rangle : V \times V \to \mathbb{R}$ is important since, unlike  $\langle \cdot,\cdot\rangle : V \times V \to \mathbb{R}_n $, it gives rise to a norm in the classical sense.
\end{remark}

Next we recall some well known properties of the inner product \eqref{Eq_Inner_product} and the norm \eqref{Eq_Norm} in the following:

\begin{lemma}\label{lem_Properties}
For every $F,G\in V$, $x\in\mathbb{R}_n$, we have:

\begin{enumerate}
\item[i)] $\Vert Fx\Vert \leq 2^{\frac{n}{2}}|x| \Vert F\Vert $
\qquad and\qquad$\Vert xF\Vert \leq 2^{\frac{n}{2}}|x| \Vert F\Vert $,
\item[ii)] $\Vert Fx\Vert =\Vert xF\Vert =|x| \Vert F\Vert $
 for $x\in\mathbb{R}^{n+1}$,
\item[iii)] $|\langle F,G\rangle |\leq 2^{\frac{n}{2}}\Vert F\Vert \,\Vert G\Vert $,
\item[iv)] $|{\rm Sc}\langle F,G\rangle |\leq\Vert F\Vert \,\Vert G\Vert $.
\end{enumerate}
\end{lemma}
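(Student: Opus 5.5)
The plan is to reduce everything to finite-dimensional Clifford algebra estimates applied componentwise, using the expansion $F=\sum_{A}F_Ae_A$ with $F_A\in V_\mathbb{R}$. For i), write $x=\sum_B x_Be_B$. Then $Fx=\sum_B (Fe_B)x_B$ and the triangle inequality gives $\Vert Fx\Vert\le\sum_B|x_B|\,\Vert Fe_B\Vert$. Right multiplication by a basis element $e_B$ permutes the basis $\{e_A\}$ up to signs, since $e_Ae_B=\pm e_{C(A,B)}$ with $A\mapsto C(A,B)$ a bijection (symmetric difference). Hence, by \eqref{Eq_Norm}, $\Vert Fe_B\Vert=\Vert F\Vert$. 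Cauchy--Schwarz in $\mathbb{R}^{2^n}$ then gives $\sum_B|x_B|\le 2^{n/2}|x|$, which proves the first bound. The same argument works for left multiplication.

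For ii), let $x=x_0+\sum_i x_ie_i$ be a paravector. I will compute $\Vert Fx\Vert^2={\rm Sc}\langle Fx,Fx\rangle={\rm Sc}(\overline{x}\,\overline{F}F\,x)$ using the sesquilinearity rules. The key fact is that $x\overline{x}=\overline{x}x=|x|^2$ is real for paravectors. I would argue via scalar-part cyclicity, ${\rm Sc}(ab)={\rm Sc}(ba)$ in $\mathbb{R}_n$, which extends to $\langle\cdot,\cdot\rangle$-valued expressions because the coefficients $\langle F_A,F_B\rangle_\mathbb{R}$ are real. This gives ${\rm Sc}(\overline{x}\,\overline{F}F\,x)={\rm Sc}(x\overline{x}\,\overline{F}F)=|x|^2\Vert F\Vert^2$. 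For left multiplication, the same computation with $\overline{xF}=\overline{F}\,\overline{x}$ gives ${\rm Sc}(\overline{F}\,\overline{x}xF)=|x|^2{\rm Sc}(\overline{F}F)$ directly, again because $\overline{x}x$ is real. The main point of care here is justifying cyclicity of ${\rm Sc}$ in the module setting. I will do this by expanding in the basis and using \eqref{Eq_SceBeA}: the quantity ${\rm Sc}(e_Ae_B)$ is symmetric in $A,B$, since $e_Ae_B$ and $e_Be_A$ differ by a sign, and that sign is $+1$ whenever the product is scalar, i.e.\ when $A=B$.

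For iv), expand ${\rm Sc}\langle F,G\rangle=\sum_{A,B}\langle F_A,G_B\rangle_\mathbb{R}{\rm Sc}(\overline{e_A}e_B)=\sum_A\langle F_A,G_A\rangle_\mathbb{R}$ by \eqref{Eq_SceBeA}. Cauchy--Schwarz, first in $V_\mathbb{R}$ and then in $\ell^2$, gives the bound $\Vert F\Vert\Vert G\Vert$.

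For iii), use \eqref{Eq_xA}: the coefficient of $e_C$ in $\langle F,G\rangle$ is ${\rm Sc}(\langle F,G\rangle\overline{e_C})={\rm Sc}\langle F,G\overline{e_C}\rangle$ by right-linearity. By iv) and ii), this is bounded by $\Vert F\Vert\,\Vert G\overline{e_C}\Vert=\Vert F\Vert\Vert G\Vert$. Summing the squares over the $2^n$ indices $C$ gives $|\langle F,G\rangle|\le 2^{n/2}\Vert F\Vert\Vert G\Vert$.

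I expect no real obstacle in this argument; the only subtle step is the cyclicity/reality argument needed in ii).
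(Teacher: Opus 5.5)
Your proof is correct. The paper does not prove this lemma; it recalls it as a well-known property of Clifford--Hilbert modules, so your componentwise verification supplies what the paper takes for granted. Parts i), ii) and iv) are sound as written. In particular, the cyclicity ${\rm Sc}(ab)={\rm Sc}(ba)$ in $\mathbb{R}_n$ is all you need in ii), since $\langle F,F\rangle$ is simply an element of $\mathbb{R}_n$.

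There is one mis-citation in iii). You bound $\Vert G\overline{e_C}\Vert$ by invoking ii), but ii) is stated only for paravectors. The element $\overline{e_C}$ is not a paravector once $C$ contains at least two indices (e.g. $e_1e_2$).

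The fact you actually need is $\Vert Ge_C\Vert=\Vert G\Vert$, and you already proved it in i). There you showed that right multiplication by a basis element permutes $\{e_A\}$ up to signs, and $\overline{e_C}=\pm e_C$. Alternatively, your cyclicity argument from ii) works for any $x$ with $x\overline{x}=\overline{x}x\in\mathbb{R}$, and this includes every $e_C$, since $e_C\overline{e_C}=\overline{e_C}e_C=1$.

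A smaller point arises in iv). You need ${\rm Sc}(\overline{e_A}e_B)=\delta_{AB}$, whereas the paper's formula is stated as ${\rm Sc}(e_B\overline{e_A})=\delta_{AB}$. The two coincide by the cyclicity you established in ii), so nothing breaks.

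With these references adjusted, all four items follow as you describe.
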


\section{Preliminaries on Dirac operator on manifolds}\label{DIRAC-Manifold}

In this section we collect the necessary concepts for the local description of Dirac operators on manifolds. To give a precise definition of the spectrum of the Dirac operator on $M$, precisely the $S$-spectrum, we first need the
definition of the Dirac operator $D$ on $M$ as well as
the  identification of $D^2$ as the sum of a second-order Laplacian and a curvature operator (Schr\"odinger-Bochner-Weitzenb\"ock theorem). The precise  expression of the operator $D^2$ is of crucial importance in order to define the $S$-spectrum because it is associated with the operator
$$
\mathcal{Q}_s(D):=D^2-2s_0D+|s|^2\mathcal{I}.
$$

Let us recall for completeness the main definitions and constructions needed to define a Dirac operator on $M$. First recall that the standard Dirac operator on the Euclidean space $\mathbb{R}^n$ is usually written in the form:
\begin{equation}
    \label{eq:Dirac_Euclidean}
    DF(x) = \sum_{i=1}^n e_i \partial_i F (x)
\end{equation}
In order to generalize this setting to manifolds we need to observe what are the main ingredients in this definition:
\begin{itemize}
    \item A Clifford algebra $\mathbb{R}_n$ with generators $e_i$, $1\leq i \leq n$;
    \item An $\mathbb{R}_n$-module $\mathcal{E}$ where the function $F$ of interest takes values;
    \item A directional derivative operator $\partial_i$.
\end{itemize}
On a Riemannian manifold $(M,g)$ we must find a counterpart for each of the ingredients. To do this we must work with sections of various vector bundles. More precisely, we will need:
\begin{itemize}
    \item A Clifford algebra bundle $Cl(TM)$ over $M$, where $TM$ is the tangent bundle. Local generators $e_i$ will be sections of this bundle and their linear span will give rise to a sub-bundle of $Cl(TM)$ isomorphic to $TM$;
    \item A vector bundle $\mathcal{E}$ for which each fiber $S_x$ is a $Cl(TM)_x$ module. Sections of this bundle will play the role of the function $F$ in~\eqref{eq:Dirac_Euclidean};
    \item A covariant derivative on $Cl(TM)$ and a covariant derivative on $\mathcal{E}$, which satisfy certain compatibility conditions. We can see that in the formula~\eqref{eq:Dirac_Euclidean} we only have derivatives of $F$. However, when we write down $D^2$ we also need a way to differentiate $e_i$'s, which will be sections of $Cl(TM)$.
\end{itemize}
When all of the ingredients are assembled together an invariant definition of the Dirac operator can be given.

         We start with the construction of relevant vector bundles using spin structures. Consider an orientable Riemannian manifold $(M,g)$.
         Let $P_{SO}(TM)$ be the bundle of oriented orthonormal frames of $TM$. Recall that for $n\geq 3$ we have the universal covering homomorphism $\xi_0:\Spin(n)\longrightarrow SO_n$ with kernel $\set{1,-1}\cong \mathbb{Z}_2$. Therefore, we use this fact to build a principal $\Spin(n)$-bundle.

        \begin{definition}
            Suppose $n\geq 3$. Then a spin structure on $TM$ is a principal $\Spin(n)$-bundle $P_{\Spin}(TM)$ together with a $2$-sheeted covering
            \begin{equation*}
                \xi: P_{\Spin}(TM)\longrightarrow P_{SO}(TM)
            \end{equation*}
            such that $\xi(pg)=\xi(p)\xi_0(g)$ for all $p\in P_{\Spin}(TM)$ and all $g\in\Spin(n)$.
        \end{definition}

        The spin structure might not be unique, or not even exist. In fact, if it exists, the second Stiefel-Whitney class of $TM$, denoted by $w_2(TM)$, vanishes, i.e., $w_2(TM) = 0$~\cite[Chapter 2, Theorem 1.7]{lawsonmichelson_spingeometry}.
        From now on, we will consider only manifolds that have a spin structure.
Let us now construct the Clifford algebra bundle and the Clifford module bundle that we use in this article. Consider the Clifford algebra $\mathbb R_n$ and the spin group $\Spin(n)$ as a subset of $\mathbb R_n$. As a result we have two natural representations. The first one is the so called left regular representation
    	\begin{equation} \label{Eq:Spin_Rep}
    		\tau: \Spin(n) \longrightarrow \text{End}(\BR_n)
    	\end{equation}
of $\Spin(n)$ on $\BR_n$ given by the left multiplication on $\BR_n$, i.e. defined as  $$\tau(a)(u):=au$$ for any $a\in\Spin(n)$ and $u\in\BR_n$. The second one, that we will call conjugation, is
$$\sigma:\Spin(n) \longrightarrow \text{End}(\BR_n)$$ given by
$$
\sigma(a)u := aua^{-1}.
$$
For these two representations we can construct the corresponding associated bundles
\begin{align*}
    Cl(TM) &= P_{Spin}(TM)\times_\sigma \BR_n,\\
    \mathcal{E} &= P_{Spin}(TM)\times_\tau \BR_n.
\end{align*}
Fibers $\mathcal{E}_x$ are natural left $Cl(TM)_x$-modules. This gives us a Clifford multiplication map
\begin{equation}\label{cimutiplic}
c: Cl(TM) \otimes \mathcal{E} \to \mathcal{E}.
\end{equation}
Indeed, recall that sections of an associated bundle are in one-to-one correspondence with equiregular maps from the principal bundle to the representation spaces. For example, the sections of $\mathcal{E}$ are in one-to-one correspondence with functions in $C^\infty(P_{Spin}(TM),\BR_n)$, which satisfy
$$
f(a\cdot x) = \tau(a^{-1}) f(x) = a^{-1}f(x),
$$
while sections of $Cl(TM)$ are represented by functions in $C^\infty(P_{Spin}(TM),\BR_n)$ which satisfy
$$
g(a\cdot x) =\sigma(a^{-1}) g(x) = a^{-1} g(x) a.
$$
Therefore, $g(x)f(x)$ is equivariant with respect to the representation $\tau$.
The next step is to construct connections on those two bundles.
	\begin{definition}\label{1.8}
		A connection on a vector bundle $\mathcal{E}\longrightarrow M$ is a bilinear map that assigns to a vector field $X\in C^\infty(M,TM)$ and a section $Y \in C^\infty(M,\mathcal{E})$  a section $\nabla_X(Y)$, and which satisfies the conditions
		\begin{itemize}
			\item[(i)]
			$\nabla_{f_1 X_1 + f_2 X_2}(Y)=f_1\nabla_{ X_1}(Y)+f_2\nabla_{X_2}(Y)$,
			\item[(ii)]
			$\nabla_X(fY)=(Xf)Y+f\nabla_{X}(Y)$,
		\end{itemize}
		for all $X_1,X_2\in C^\infty(M,TM)$, $Y,\in \mathcal{C}^\infty(M,\mathcal{E})$ and for all $f,g\in \mathcal{C}^\infty(M)$. Section $\nabla_X(Y)$ is called the covariant derivative of $Y$ along $X$.
	\end{definition}

Recall that every Riemannian manifold $(M,g)$ has a canonical connection, namely the Levi-Civita connection.

\begin{definition}
    A connection $\nabla$ on $TM$ is called torsion free if it satisfies
    $$
    \nabla_X Y - \nabla_Y X = [X,Y], \qquad \forall X,Y \in C^\infty(M,TM).
    $$
    A connection $\nabla$ on $TM$ is called metric compatible if
    $$
    X(g(Y,Z)) = g(\nabla_X(Y),Z)+g(Y,\nabla_X Z).
    $$
\end{definition}

\begin{theorem}[Theorem 1.10,~\cite{gilbert1991clifford}]
    Every Riemannian manifold admits a unique torsion free metric compatible connection $\nabla$. This connection is called the Levi-Civita connection.
\end{theorem}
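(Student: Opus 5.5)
We prove existence and uniqueness separately, both via the Koszul formula.

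\emph{Uniqueness.} Suppose $\nabla$ is torsion free and metric compatible. Apply the compatibility identity to the three cyclic permutations of $(X,Y,Z)$:
\begin{align*}
&X(g(Y,Z)) = g(\nabla_X Y,Z)+g(Y,\nabla_X Z),\\
&Y(g(Z,X)) = g(\nabla_Y Z,X)+g(Z,\nabla_Y X),\\
&Z(g(X,Y)) = g(\nabla_Z X,Y)+g(X,\nabla_Z Y).
\end{align*}
Add the first two identities and subtract the third. Then use the torsion-free condition to replace each difference $\nabla_A B-\nabla_B A$ by the bracket $[A,B]$. This yields the Koszul formula
\begin{equation*}
2g(\nabla_X Y,Z)=X g(Y,Z)+Y g(Z,X)-Z g(X,Y)+g([X,Y],Z)-g([Y,Z],X)+g([Z,X],Y).
\end{equation*}
The right-hand side involves only $g$ and Lie brackets. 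Since $g$ is nondegenerate, $\nabla_X Y$ is determined by its pairings with all $Z$, so $\nabla$ is unique.

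\emph{Existence.} Fix $X,Y$ and denote the right-hand side of the Koszul formula by $K(X,Y,Z)$.
\begin{enumerate}
\item[(a)] Show that $Z\mapsto K(X,Y,Z)$ is $C^\infty(M)$-linear. The terms containing derivatives of $f$ when $Z$ is replaced by $fZ$ cancel among the bracket and derivative terms.
\item[(b)] Conclude that this map is a tensorial one-form, and define $\nabla_X Y$ as the vector field that is $g$-dual to $\tfrac12 K(X,Y,\cdot)$.
\item[(c)] Verify the connection axioms of Definition~\ref{1.8}. Check $C^\infty(M)$-linearity in $X$, and the Leibniz rule $\nabla_X(fY)=(Xf)Y+f\nabla_X Y$ in $Y$, again by tracking the derivative terms of $f$.
\item[(d)] Verify the two defining properties. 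For torsion freeness, compute $K(X,Y,Z)-K(Y,X,Z)=2g([X,Y],Z)$. For metric compatibility, compute $K(X,Y,Z)+K(X,Z,Y)=2X g(Y,Z)$.
\end{enumerate}

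The construction is local but canonical. Since uniqueness holds on each open set, the locally defined connections agree on overlaps and glue to a global connection. Alternatively, tensoriality lets us work globally from the start.

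The main obstacle is the bookkeeping in steps (a) and (c): each of the six terms must be tracked carefully so that the derivative-of-$f$ terms cancel exactly. Everything else is direct substitution.
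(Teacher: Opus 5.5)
Your proof is correct; it is the standard Koszul-formula argument. I checked the following:
\begin{itemize}
\item The formula itself: your $+g([Z,X],Y)$ is the usual $-g([X,Z],Y)$.
\item The cancellations giving $K(fX,Y,Z)=fK(X,Y,Z)$ and $K(X,Y,fZ)=fK(X,Y,Z)$.
\item The Leibniz identity $K(X,fY,Z)=fK(X,Y,Z)+2(Xf)g(Y,Z)$, which yields $\nabla_X(fY)=(Xf)Y+f\nabla_XY$.
\item The two identities in (d).
\end{itemize}

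The paper gives no proof of its own, only the citation to Gilbert--Murray. Its next result, Theorem~\ref{Th:ChrisSymb}, is exactly your Koszul formula evaluated on an orthonormal frame $X=E_i$, $Y=E_j$, $Z=E_k$. There the three derivative terms vanish because $g(E_a,E_b)$ is constant, leaving $2\nu_{ij}^k=c_{ij}^k-c_{jk}^i+c_{ki}^j$. So your route is fully consistent with how the paper uses the result.

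Of your two ways to globalize, prefer the tensorial one. The gluing alternative silently assumes that connections are local operators, so that uniqueness can be applied on overlaps. That is true by a bump-function argument, but it would have to be stated.

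Two standard facts are used implicitly and are fine:
\begin{itemize}
\item A $C^\infty(M)$-linear map from vector fields to functions is given by a smooth one-form.
\item Every tangent vector extends to a global vector field; this is what makes "$g(W,Z)=0$ for all $Z$ implies $W=0$" valid.
\end{itemize}
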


Locally in a chart $U \subset M$, we can choose a basis $E_i \in C^\infty(M,TM)$, $i=1,\dots,n$ of vector fields. Then there exist functions $\nu_{ij}^k\in C^\infty(M)$ such that
\begin{equation}
    \label{eq:connection_coefficients}
    \nabla_{E_i}E_j(x) = \sum_{k=1}^n \nu_{ij}^k(x) E_k(x).
\end{equation}
These functions are called connection coefficients or Christoffel symbols associated to the connection $\nabla$ and the frame $E_1,\dots E_n$. Knowing the connection coefficients, allows us to compute the covariant derivative of any section of $C^\infty(M,TM)$ along any other section using the rules from Definition~\ref{1.8}.

Let us make the description of the Levi-Civita connection $\nabla$ more explicit. Fix a chart $U\subset M$ and choose an orthonormal basis of vector fields $E_1,\dots,E_n$ defined on $U$. Since the commutator of vector fields is again a vector field, we have
$$
[E_i,E_j](x) = (E_i \circ E_j - E_j \circ E_i)(x) =: \sum_{k=1}^n c_{ij}^k(x) E_k(x).
$$
Locally defined functions $c^k_{ij}\in C^\infty(U)$ are called structure functions and they can be used to compute the connection coefficients~\eqref{eq:connection_coefficients} of the Levi-Civita connection.

	\begin{theorem}[ ] \label{Th:ChrisSymb}
		The connection coefficients associated to an orthonormal frame $E_1,\dots,E_n$ are given by
		\begin{equation*}
			\nu_{ij}^k = \frac{1}{2}\p{c_{ij}^k - c_{jk}^i + c_{ki}^j}, \qquad i,j,k = 1,\dots,n,
		\end{equation*}
		where $c_{ij}^k$ are the structure functions of the frame $E_1,\dots,E_n$.
	\end{theorem}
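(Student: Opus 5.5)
We derive the formula from the Koszul formula applied to the orthonormal frame. Start from metric compatibility and torsion freeness of the Levi-Civita connection $\nabla$. For vector fields $X,Y,Z$ these two properties give the standard Koszul identity
\begin{align*}
2g(\nabla_X Y,Z) ={}& X g(Y,Z) + Y g(Z,X) - Z g(X,Y) \\
&+ g([X,Y],Z) - g([Y,Z],X) + g([Z,X],Y).
\end{align*}
To obtain it, write out $X g(Y,Z)$, $Y g(Z,X)$ and $Z g(X,Y)$ using metric compatibility. Add the first two expressions and subtract the third. Then use torsion freeness, $\nabla_X Y-\nabla_Y X=[X,Y]$, to replace every difference of covariant derivatives by a bracket.

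Next, specialize to $X=E_i$, $Y=E_j$, $Z=E_k$. Since the frame is orthonormal, $g(E_a,E_b)=\delta_{ab}$ is constant on $U$, so the three derivative terms vanish. By \eqref{eq:connection_coefficients} and orthonormality, the left-hand side is $2\nu_{ij}^k$. The bracket terms are read off from the definition of the structure functions: $g([E_i,E_j],E_k)=c_{ij}^k$, $g([E_j,E_k],E_i)=c_{jk}^i$ and $g([E_k,E_i],E_j)=c_{ki}^j$. Substituting these gives
$$2\nu_{ij}^k=c_{ij}^k-c_{jk}^i+c_{ki}^j,$$
which is the claimed formula.

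The only real work is verifying the Koszul identity, and in particular the sign bookkeeping when torsion freeness is used to convert the differences $\nabla_XY-\nabla_YX$ into brackets. Uniqueness of $\nabla$ (cited from \cite{gilbert1991clifford}) is not needed for the computation itself. It only guarantees that the $\nu_{ij}^k$ so obtained are the coefficients of the Levi-Civita connection, and conversely that this formula determines that connection on $U$.
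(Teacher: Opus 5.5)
Your argument is correct. The paper gives no proof to compare it with: the formula is quoted there as a standard fact, and the citation bracket in the theorem header is left empty. Your Koszul derivation is the standard argument that fills this gap. The signs check out: the usual form of the Koszul identity contains $-g([X,Z],Y)$, which is your $+g([Z,X],Y)$. With $g(\nabla_{E_i}E_j,E_k)=\nu_{ij}^k$ and $g([E_a,E_b],E_c)=c_{ab}^c$, you obtain precisely $2\nu_{ij}^k=c_{ij}^k-c_{jk}^i+c_{ki}^j$.

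You can avoid the sign bookkeeping you flagged by working in the frame from the outset. Metric compatibility applied to $g(E_j,E_k)=\delta_{jk}$ gives $\nu_{ij}^k=-\nu_{ik}^j$, and torsion freeness gives $\nu_{ij}^k-\nu_{ji}^k=c_{ij}^k$. Expanding $c_{ij}^k-c_{jk}^i+c_{ki}^j$ in the $\nu$'s, four of the six terms cancel in pairs by antisymmetry, and the remaining two give $2\nu_{ij}^k$.

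One small correction to your last paragraph. You compute with the Levi-Civita connection using only its two defining properties, so only its existence is used. The computation itself proves uniqueness on $U$: any metric, torsion-free connection must have these coefficients, and a connection on $U$ is determined by its values on a frame. So uniqueness is not what ``guarantees'' that the $\nu_{ij}^k$ are the Levi-Civita coefficients.
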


The Levi-Civita connection induces a natural connection on the orthonormal frame bundle $P_{SO}(TM)$ see \cite{lawsonmichelson_spingeometry}, that can be further lifted to the spin bundle $P_{Spin}(TM)$. Since $Cl(TM)$ and $\mathcal{E}$ are associated bundles, a connection on $P_{Spin}(TM)$ further induces connections $\nabla^\tau$ and $\nabla^\sigma$ on $\mathcal{E}$ and $Cl(TM)$ correspondingly. The two connections are compatible in the following sense.
\begin{proposition}[\cite{lawsonmichelson_spingeometry}, Proposition 4.11]
Given a section $F\in C^\infty(M,\mathcal{E})$ and a section $G\in C^\infty(M,Cl(TM))$, one has
$$
\nabla^\tau (G\cdot F) = (\nabla^\sigma G)\cdot F+ G\cdot\nabla^\tau ( F).
$$
\end{proposition}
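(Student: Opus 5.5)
The plan is to reduce the statement to a compatibility property between principal-bundle data and the two representations $\tau$ and $\sigma$. Sections of associated bundles are identified with equivariant maps on $P_{\Spin}(TM)$, and the induced connections $\nabla^\tau$, $\nabla^\sigma$ both come from one connection form $\omega$ on $P_{\Spin}(TM)$. So we will work upstairs, on the principal bundle. There the covariant derivative of a section represented by an equivariant $f$ is the derivative of $f$ along the horizontal lift of $X$. The Leibniz rule should then follow from the ordinary Leibniz rule for $\BR_n$-valued functions.

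\textbf{Step 1.} Let $G$ and $F$ be represented by functions $g,f\in C^\infty(P_{\Spin}(TM),\BR_n)$ satisfying
\[
g(p a)=a^{-1}g(p)a, \qquad f(pa)=a^{-1}f(p).
\]
The product $gf$ then satisfies $(gf)(pa)=a^{-1}g(p)f(p)$. As already observed before the statement, this means $gf$ is $\tau$-equivariant and represents the Clifford product $G\cdot F$ given by the map $c$ in \eqref{cimutiplic}.

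\textbf{Step 2.} For a vector field $X$ on $M$, let $\tilde X$ be its $\omega$-horizontal lift. By the standard construction of the induced connection, $\nabla^\tau_X F$ is represented by $\tilde X f$ and $\nabla^\sigma_X G$ is represented by $\tilde X g$. Since $\tilde X$ is a derivation on $\BR_n$-valued functions and the algebra multiplication of $\BR_n$ is a fixed bilinear map,
\[
\tilde X(gf)=(\tilde X g)f+g(\tilde X f).
\]
Translating back through Step 1 gives $\nabla^\tau_X(G\cdot F)=(\nabla^\sigma_X G)\cdot F+G\cdot\nabla^\tau_X F$ for every $X$, which is the claim.

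\textbf{Alternative local check.} For a more concrete argument, take a local section $s$ of $P_{\Spin}$ lifting the orthonormal frame $E_i$. Write the connection form as $\omega=\frac14\sum_{j<k}\omega_{jk}e_je_k$, with $\omega_{jk}$ expressed through the coefficients $\nu_{ij}^k$ of Theorem~\ref{Th:ChrisSymb}. In this frame the two connections take the form
\[
\nabla^\tau_X F=X(F)+\omega(X)F, \qquad \nabla^\sigma_X G=X(G)+[\omega(X),G].
\]
The identity then reduces to $[\omega,G]F+G\,\omega F=\omega\, GF$, which holds trivially.

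\textbf{Main obstacle.} The only delicate point is the identification of the induced connections: one must check that the infinitesimal versions of the representations are $d\tau(\xi)=\xi\cdot$ (left multiplication) and $d\sigma(\xi)=[\xi,\cdot]$ (commutator) on $\spin\subset\BR_n$. This follows by differentiating $\tau(a)u=au$ and $\sigma(a)u=aua^{-1}$ at $a=1$. With these derivative formulas established, the rest of the argument is purely algebraic.
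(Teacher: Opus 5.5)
Your proof is correct. The paper gives no argument of its own and only cites Lawson--Michelsohn. Your horizontal-lift argument (Clifford multiplication intertwines $\sigma\otimes\tau$ with $\tau$, and the horizontal derivative satisfies the ordinary Leibniz rule), or equivalently the local cancellation $[\omega,G]F+G\,\omega F=\omega\, GF$, is essentially the standard proof being invoked there.

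Your normalization $\omega=\frac14\sum_{j<k}\omega_{jk}e_je_k$ differs by a factor of $2$ from the paper's $\nu_i=\frac12\sum_{j<k}\nu_{ij}^k e_je_k$. This is harmless, because the cancellation holds for any $\omega(X)\in\BR_n$ provided the same $\omega$ enters both $\nabla^\tau$ and $\nabla^\sigma$.
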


Let us write down explicitly the connection $\nabla^\tau$. Since the structure functions of an orthonormal basis $E_1,\dots,E_n$ have anti-symmetric in the lower indices, i.e., $c^k_{ij} = - c^k_{ji}$, the connection coefficients of a Levi-Civita connection given by Theorem~\ref{Th:ChrisSymb} are anti-symmetric in the upper and lower indices, that is
$$
\nu_{ij}^k(x) = - \nu_{ik}^j(x).
$$
Therefore, if we fix an index $i\in\{1,\dots,n\}$, we can view $\nu_{ij}^k$ as components of a skew-symmetric $n\times n$ matrix. Therefore, we get an $\mathfrak{so}(n)$-valued function that we call $\nu_i\in C^\infty(U,\mathfrak{so}(n))$. Since $\mathfrak{so}(n)$ is isomorphic to $\spin(n)$, we can give a spin representation of the same function:
$$
\nu_i(x) = \frac{1}{2}\sum_{1\leq j < k \leq n} \nu_{ij}^ke_je_k = \frac{1}{4}\sum_{j, k=1}^n \nu_{ij}^k e_je_k.
$$

Now we can write a local expression for the connection operator $\nabla^\tau$. For an orthonormal basis $E_1,\dots,E_n$ of vector fields we have, see \cite{lawsonmichelson_spingeometry}
$$
\nabla^\tau_{E_i} = E_i+d\tau(\nu_i(x)).
$$
A covariant derivative with respect to any other vector field can be computed from the properties in the Definition~\ref{1.8}.
Recall that representation $\tau$ gives rise to a representation
$$
\dif\tau:\spin(n)\to \mathfrak{end}(\BR_n).
$$
 The Lie algebra $\spin(n)$ can be identified with the subspace of $\BR_n$ generated by $e_ie_j$ for $1\leq i <j\leq n$, see \cite{lawsonmichelson_spingeometry}. The representation $d\tau$ is then just
$$
\dif\tau(e_ie_j)u = e_ie_j u.
$$
Which gives
$$
\nabla^\tau_{E_i} = E_i+\frac{1}{2}\sum_{1\leq j<k\leq n}\nu^k_{ij}e_je_k.
$$
 We proceed next in defining the spinor Dirac operator. The expression~\eqref{eq:Dirac_Euclidean} uses derivatives along an orthonormal basis in $\BR^n$. Since on a manifold such a choice can be only made locally, one uses instead the total derivative map
 $$
\nabla^\tau: C^\infty(M,\mathcal{E}) \to C^\infty(M,\mathcal{E}\otimes T^* M).
$$
Assume that we have chosen a local basis $E_1,\dots,E_n$ of sections of $TM$, and a local basis of the dual forms $\theta^1,\dots, \theta^n \in C^\infty(M,T^*M)$, meaning that $\theta^i(E_j)=\delta^i_j$. Then the local expressions for the total derivative map is given by
$$
\nabla^\tau F = \sum_{i=1}^n \nabla_{E_i}^\tau F \otimes \theta^i.
$$
Since $\BR^n \subset \BR_n$ is invariant under conjugation, there is a sub-bundle of $Cl(TM)$ that is isomorphic to $TM$. In addition, since $M$ is endowed with a Riemannian metric, the tangent bundle $TM$ is isomorphic to $T^* M$, where the isomorphism is the usual isomorphism
 (this is Riesz representation theorem). Therefore, we can identify $T^* M$ with a sub-bundle of $Cl(TM)$.
Using these identifications the spinor Dirac operator is defined as
$$
D = c \circ \nabla^\tau,
$$
where $c$ is defined in (\ref{cimutiplic}).
Given a basis of sections $E_1,\dots,E_n$ of $TM$, we can write  the Dirac operator locally as
$$
D = \sum_{i=1}^n e_i \nabla^\tau_{E_i},
$$
where $e_1,\dots e_n$ are sections of $Cl(TM)$ that correspond to sections $E_i$ of $TM\subset Cl(TM)$.
When $E_i$ are orthonormal, then we get an expression of the form
\begin{equation} \label{Eq:General_Dirac}
    		D=\sum_{i=1}^ne_i\nabla^\tau_{E_i}=\sum_{i=1}^ne_i \p{E_i(x)+\frac{1}{2}\sum_{1\leq j<k\leq n}^n\nu_{ij}^k(x)e_je_k},
    	\end{equation}
\begin{remark}[Equivariance and Representation Theory]
The validity of algebraic operations on associated bundles is determined by their equivariance with respect to the underlying representations. An operation is well-defined on the bundle only if it commutes with the representation maps.
Consider the multiplication map $m: \BR_n \otimes \BR_n \to \BR_n$. For the Clifford bundle $Cl(TM)$, which is associated with the representation $\sigma$, the following diagram commutes:
\[
\begin{tikzcd}
\BR_n \otimes \BR_n \arrow[r, "m"] \arrow[d, "\sigma \otimes \sigma"'] & \BR_n \arrow[d, "\sigma"] \\
\BR_n \otimes \BR_n \arrow[r, "m"]                                   & \BR_n
\end{tikzcd}
\]
This confirms that multiplication is a valid operation on $Cl(TM)$. However, for the bundle $\E$, associated with the representation $\tau$, the corresponding diagram is not commutative:
\[
\begin{tikzcd}
\BR_n \otimes \BR_n \arrow[r, "m"] \arrow[d, "\tau \otimes \tau"'] & \BR_n \arrow[d, "\tau"] \\
\BR_n \otimes \BR_n \arrow[r, "m"]                                 & \BR_n
\end{tikzcd}
\]
This failure of commutativity provides the formal justification for the lack of a well-defined internal multiplication on $\E$.
\end{remark}

Next we would like to define a scalar product on $\mathcal{E}$ that would generalize the scalar product given by $Sc(\bar F G)$.
We can construct a bilinear map
$$
C^\infty (M,\mathcal{E}) \otimes C^\infty (M,\mathcal{E}) \to C^\infty(M)
$$
as follows. Since sections of $\mathcal{E}$ are in one-to-one correspondence with smooth equivariant functions $C^{\infty}_{eq}(P_{Spin},\BR_n)$ to a pair of sections $F,G \in C^{\infty} (M,\mathcal{E})$ we can associate a pair of functions $f,g\in C^{\infty}_{eq}(P_{Spin},\BR_n)$. Then we can define a map
\begin{equation}
    \label{eq:scalar_preproduct}
    F \otimes G \mapsto  {\rm Sc}(F(x),G(x)) := {\rm Sc}(\overline{f}(p)g(p)), \qquad \pi(p) = x.
\end{equation}
Since $C^{\infty}_{eq}(P_{Spin},\BR_n) \subset C^\infty(P_{Spin},\BR_n)$, from the definition of the representation $\tau$ it follows that the function on the right is invariant with respect to the action on the spin group. This means that it descends to a function on $M$. It follows that $(F(x),G(x))$ is real-valued, symmetric and non-degenerate, since the same is true for $Sc(\bar f g)$. Thus we get a natural metric on $\mathcal{E}$. Moreover, the connection $\nabla^\tau$ is compatible with this metric. Using the Riemannian volume, this allows us to define an $L^2$-scalar product on $\mathcal{E}$ as
\begin{equation}
    \label{eq:L2_norm_sections}
    \langle F,G\rangle_2 = \int_M  {\rm Sc}(F(x),G(x))d vol
\end{equation}
where $d vol$ is the Riemannian volume form.
In local coordinates it is usually written as $\sqrt{\det g} \, \dif x_1 \wedge \cdots \wedge \dif x_n$.
This allows us to define $L^2$-sections of $\mathcal{E}$ as well as the adjoint for the total derivative operator $\nabla^\tau$.

        \begin{theorem}[Schr\"odinger-Lichnerowicz formula]
            Let $M$ be a spin manifold and suppose $\mathcal{E}$ is a real spinor bundle over $M$ with $\BR_n$ as fiber endowed with the canonical Riemannian connection. Then
            \begin{equation}
            \label{eq:laplace_connection}
                D^2 = \Delta_\tau + \frac{1}{4}\kappa,
            \end{equation}
            where  $\Delta_\tau:=\p{\nabla^\tau}^*\nabla^\tau$ is the connection Laplacian on $\mathcal{E}$ and $\kappa$ is the scalar curvature.
        \end{theorem}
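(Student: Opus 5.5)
We prove the Schr\"odinger--Lichnerowicz formula by a local computation in an orthonormal frame, reducing it to a curvature identity. Fix $x_0\in M$ and choose a synchronous (geodesic) orthonormal frame $E_1,\dots,E_n$ near $x_0$, i.e.\ $\nabla_{E_i}E_j(x_0)=0$. Then all connection coefficients $\nu_{ij}^k$ vanish at $x_0$ by Theorem~\ref{Th:ChrisSymb}, and so do the induced terms $\nu_i(x_0)$ of $\nabla^\tau$ and the covariant derivatives of the sections $e_i$ of $Cl(TM)$. Since both sides of the formula are invariantly defined operators, it suffices to check the identity at $x_0$ in this frame.

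\textbf{Step 1 (expand $D^2$).} Using the compatibility $\nabla^\tau(G\cdot F)=(\nabla^\sigma G)\cdot F+G\cdot\nabla^\tau F$ together with $\nabla^\sigma e_j(x_0)=0$, we get at $x_0$
\begin{equation*}
D^2F=\sum_{i,j}e_ie_j\nabla^\tau_{E_i}\nabla^\tau_{E_j}F
=-\sum_i\nabla^\tau_{E_i}\nabla^\tau_{E_i}F+\sum_{i<j}e_ie_j\bigl(\nabla^\tau_{E_i}\nabla^\tau_{E_j}-\nabla^\tau_{E_j}\nabla^\tau_{E_i}\bigr)F,
\end{equation*}
where we split off the diagonal terms using $e_i^2=-1$ and antisymmetrized the rest using $e_ie_j=-e_je_i$.

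\textbf{Step 2 (identify the connection Laplacian).} We show that $(\nabla^\tau)^*\nabla^\tau F=-\sum_i(\nabla^\tau_{E_i}\nabla^\tau_{E_i}-\nabla^\tau_{\nabla_{E_i}E_i})F$. This follows by integrating $\mathrm{Sc}(\nabla^\tau F,\nabla^\tau G)$ by parts for compactly supported $G$, using the metric compatibility of $\nabla^\tau$ and the divergence theorem. At $x_0$ the term $\nabla_{E_i}E_i$ vanishes, so the first sum in Step 1 equals $\Delta_\tau F$.

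\textbf{Step 3 (curvature term).} Since $[E_i,E_j](x_0)=0$, the commutator in Step 1 is the spinor curvature $R^\tau(E_i,E_j)$. The key input is that $R^\tau$ is the lift of the Riemann tensor through $d\tau$ of the isomorphism $\mathfrak{so}(n)\cong\spin(n)$ described above:
\begin{equation*}
R^\tau(E_i,E_j)=\frac14\sum_{k,l}R_{ijkl}\,e_ke_l .
\end{equation*}
This is obtained by differentiating $\nabla^\tau_{E_i}=E_i+\frac14\sum_{j,k}\nu_{ij}^ke_je_k$ once more at $x_0$. The curvature term therefore becomes
\begin{equation*}
\frac18\sum_{i,j,k,l}R_{ijkl}\,e_ie_je_ke_l .
\end{equation*}

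\textbf{Step 4 (Clifford algebra and Bianchi).} This is the main obstacle, because the sign conventions have to be tracked carefully. We split the quadruple sum according to whether the indices are distinct or not:
\begin{itemize}
\item[(a)] When $i,j,k$ are distinct, the products $e_ie_je_k$ are totally antisymmetric, so these terms cancel by the first Bianchi identity $R_{ijkl}+R_{jkil}+R_{kijl}=0$.
\item[(b)] The remaining terms with $k=i$ or $k=j$ reduce, via $e_i^2=-1$, to Ricci contractions $\sum_l\mathrm{Ric}_{jl}e_je_l$.
\item[(c)] By symmetry of the Ricci tensor, the terms in (b) collapse to a scalar multiple of $\sum_j\mathrm{Ric}_{jj}=\kappa$.
\end{itemize}
The prefactor $\tfrac18$ combined with the factor $2$ from (b) gives $\frac14\kappa$, with the sign fixed by the convention for $R_{ijkl}$. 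This yields $D^2=\Delta_\tau+\frac14\kappa$.
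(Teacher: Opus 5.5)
The paper gives no proof of this statement; it quotes it as the classical Schr\"odinger--Lichnerowicz formula (cf.\ \cite[Ch.~II, \S 8]{lawsonmichelson_spingeometry}). Your argument is correct and is exactly that standard proof: a synchronous frame, splitting $D^2$ into $-\sum_i\nabla^\tau_{E_i}\nabla^\tau_{E_i}$ plus $\sum_{i<j}e_ie_jR^\tau(E_i,E_j)$, the spin lift $R^\tau(E_i,E_j)=\tfrac14\sum_{k,l}R_{ijkl}e_ke_l$, and the Bianchi/Ricci contraction.

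The one point you assert rather than check is the final sign, and it is not left free by convention once Step 3 is fixed. Your derivation there forces $R_{ijkl}(x_0)=E_i\nu_{jk}^l-E_j\nu_{ik}^l$, i.e.\ $R_{ijkl}=g(R(E_i,E_j)E_k,E_l)$ with $R(X,Y)=[\nabla_X,\nabla_Y]-\nabla_{[X,Y]}$. With this, the $k=i$ and $k=j$ terms (with $i\neq j$) sum to $-2\sum_{i,l}\bigl(\sum_j R_{ijjl}\bigr)e_ie_l=-2\sum_{i,l}\mathrm{Ric}_{il}e_ie_l=2\kappa$, so the curvature term is $+\tfrac14\kappa$. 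The round sphere is a quick check: there $R_{ijkl}=\delta_{jk}\delta_{il}-\delta_{ik}\delta_{jl}$ and $\kappa=n(n-1)$.
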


We now introduce two other spaces crucial for what follows:
	\begin{align*}
		C_c^k(M,\mathcal{E}) & := \set{F\in C^k(M,\mathcal{E}) \midbar \operatorname{supp }(F) \text{ is compact}}, \\
		C_{cc}^k(M,\mathcal{E}) & := \set{F\in C_c^k(M,\mathcal{E}) \midbar \operatorname{supp }(F)\subset\mathring{M}}.
	\end{align*}
	In other words, a general $G\in C_c^k(M,\mathcal{E})$ might satisfy $\operatorname{supp }(F)\cap \partial M \ne \emptyset$. On the contrary, $F\in C_{cc}^k(M,\mathcal{E})$ always satisfies $\operatorname{supp }(F)\cap\partial M = \emptyset$.
	
\begin{theorem}[\cite{lawsonmichelson_spingeometry}, Chapter II, see proof of Prop. 8.1] \label{Th:Green_like_formula}
    The operator
    $$-\Delta_\tau: C_c^\infty(M, \mathcal{E}) \longrightarrow C_c^\infty(M, \mathcal{E})$$ satisfies the Green's-like formula:
    \begin{equation} \label{Eq:Green's}
        \int_{M} \Sc{-\Delta_\tau F, G} \: \dvol = \int_{M} \Sc{\nabla^\tau F, \nabla^\tau G} \: \dvol - \int_{\partial M} \Sc{\nabla_N^\tau F, G} \: \widetilde{\dvol},
    \end{equation}
    where $F, G \in C_c^\infty(M, \mathcal{E})$, $N$ is the unit normal vector field to the boundary, $$\dvol := \sqrt{\det g} \, \dif x_1 \wedge \cdots \wedge \dif x_n,$$ and $\widetilde{\dvol}$ is the restriction of $\dvol$ to the boundary.
\end{theorem}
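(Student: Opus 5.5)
We need to prove the Green's-like formula for $-\Delta_\tau = -(\nabla^\tau)^*\nabla^\tau$. The statement writes $-\Delta_\tau$ for what is presumably the rough Laplacian $\sum_i \nabla^\tau_{E_i}\nabla^\tau_{E_i} - \nabla^\tau_{\nabla_{E_i}E_i}$, i.e. minus the positive operator. The plan is to reduce the identity to the divergence theorem applied to a suitable vector field built from $F$ and $G$.

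\textbf{Step 1: define the vector field.} For $F,G\in C_c^\infty(M,\mathcal{E})$, define $X$ by duality through $g(X,Y)=\Sc{\nabla^\tau_Y F, G}$ for every $Y$. In a local orthonormal frame this reads
$$X=\sum_{i=1}^n \Sc{\nabla^\tau_{E_i}F,G}\,E_i.$$
This is a globally defined, compactly supported smooth vector field, because it is defined invariantly by tensoriality in $Y$.

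\textbf{Step 2: compute its divergence.} Work at a point $x$ and choose the orthonormal frame to be synchronous (geodesic) at $x$, so that $(\nabla_{E_i}E_j)(x)=0$; for a general frame one simply carries the Christoffel terms $\nu_{ii}^k$ along. Then $\operatorname{div}X=\sum_i E_i\,\Sc{\nabla^\tau_{E_i}F,G}$ at $x$. Metric compatibility of $\nabla^\tau$ with the pointwise product \eqref{eq:scalar_preproduct}, stated in the paper, gives
$$\operatorname{div}X=\sum_i \Sc{\nabla^\tau_{E_i}\nabla^\tau_{E_i}F,G}+\sum_i\Sc{\nabla^\tau_{E_i}F,\nabla^\tau_{E_i}G}.$$
At $x$ the first sum is $\Sc{-\Delta_\tau F,G}$, since the correction $\nabla^\tau_{\nabla_{E_i}E_i}$ vanishes there. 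The second sum is $\Sc{\nabla^\tau F,\nabla^\tau G}$, with the pointwise inner product on $\mathcal{E}\otimes T^*M$ induced from the metric on $\mathcal{E}$ and $g$. Both sides are frame-independent, so the identity holds everywhere.

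\textbf{Step 3: integrate.} Apply the divergence theorem on the compact manifold with boundary:
$$\int_M \operatorname{div}X\,\dvol=\int_{\partial M} g(X,\nu)\,\widetilde{\dvol}=\int_{\partial M}\Sc{\nabla^\tau_\nu F,G}\,\widetilde{\dvol},$$
where $\nu$ is the outward unit normal. Rearranging gives \eqref{Eq:Green's} with the sign shown, provided $N$ is taken as the \emph{outward} normal.

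\textbf{Main obstacle: signs and conventions.} The computation itself is routine; the risk lies entirely in bookkeeping.
\begin{itemize}
\item The sign convention for $\Delta_\tau$ must be checked. With $\Delta_\tau=(\nabla^\tau)^*\nabla^\tau$ one has $\Delta_\tau=-\sum_i(\nabla_{E_i}\nabla_{E_i}-\nabla_{\nabla_{E_i}E_i})$, so on the left-hand side $\Sc{-\Delta_\tau F,G}$ should correspond to $\Sc{(\nabla^\tau)^*\nabla^\tau F,G}$. Matching \eqref{Eq:Green's} means the operator written there is the nonnegative one, and the boundary term then carries the minus sign for the outward normal.
\item I would verify the signs on the flat case $M=[0,1]$, where the identity must reduce to $\int -F''G = \int F'G' - [F'G]_0^1$.
\item Compatibility of $\nabla^\tau$ with the metric is asserted earlier in the paper, and I would simply invoke it.
\end{itemize}
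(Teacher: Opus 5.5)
Your route (the vector field $X$ with $g(X,Y)=\Sc{\nabla^\tau_Y F,G}$, its divergence, then the divergence theorem) is the argument of Lawson--Michelsohn's Prop.~II.8.1, which the paper cites, with the boundary term kept. The method is right, but your sign bookkeeping, as written, does not produce the stated formula.

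In your opening sentence and in Step~2 you identify $\sum_i\Sc{\nabla^\tau_{E_i}\nabla^\tau_{E_i}F,G}$ (at the center of a synchronous frame) with $\Sc{-\Delta_\tau F,G}$. That is, you take $-\Delta_\tau$ to be the rough Laplacian $\sum_i(\nabla^\tau_{E_i}\nabla^\tau_{E_i}-\nabla^\tau_{\nabla_{E_i}E_i})=-(\nabla^\tau)^*\nabla^\tau$. Then $\operatorname{div}X=\Sc{-\Delta_\tau F,G}+\Sc{\nabla^\tau F,\nabla^\tau G}$, and Step~3 gives
\[
\int_M\Sc{-\Delta_\tau F,G}\dvol=-\int_M\Sc{\nabla^\tau F,\nabla^\tau G}\dvol+\int_{\partial M}\Sc{\nabla^\tau_\nu F,G}\,\widetilde{\dvol}.
\]
Its right-hand side is the negative of the one claimed. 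No rearrangement or choice of normal repairs this: switching to the inward normal flips only the boundary term, never the sign of the Dirichlet term. Your proposed one-dimensional check exposes it at once. For $\int_0^1(-F'')G=\int_0^1F'G'-[F'G]_0^1$ to be the formula, $-\Delta_\tau$ must be $-d^2/dx^2$. So $\sum_i\nabla_{E_i}\nabla_{E_i}$ corresponds to $+\Delta_\tau$, not $-\Delta_\tau$.

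The repair is to fix one convention and carry it through. The formula holds exactly when $-\Delta_\tau=(\nabla^\tau)^*\nabla^\tau$, i.e.\ when $\Delta_\tau=\sum_i(\nabla^\tau_{E_i}\nabla^\tau_{E_i}-\nabla^\tau_{\nabla_{E_i}E_i})$ is the non-positive rough Laplacian. This is the convention of the introduction ($D^2=-\Delta_\tau+\frac14\kappa$). It is not the literal definition $\Delta_\tau:=(\nabla^\tau)^*\nabla^\tau$ given with the Schr\"odinger--Lichnerowicz formula, under which the displayed identity is false by an overall sign on the right.

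With this convention Step~2 reads $\operatorname{div}X=\Sc{\Delta_\tau F,G}+\Sc{\nabla^\tau F,\nabla^\tau G}$. You can derive this in any orthonormal frame from $\operatorname{div}X=\sum_i g(\nabla_{E_i}X,E_i)$, which produces the correction term $-\sum_i\Sc{\nabla^\tau_{\nabla_{E_i}E_i}F,G}$ automatically, so synchronous frames at boundary points are unnecessary. The divergence theorem with the outward unit normal $N$ then gives precisely the Green's-like formula.

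Your bullet ``with $\Delta_\tau=(\nabla^\tau)^*\nabla^\tau$ \dots\ $\Sc{-\Delta_\tau F,G}$ should correspond to $\Sc{(\nabla^\tau)^*\nabla^\tau F,G}$'' is self-contradictory. It reaches the right conclusion only by contradicting Step~2, and should be replaced by this single convention.
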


Given $F\in C^\infty(M,\mathcal{E})$, we have that $\nabla^\tau F\in C^\infty(M,T^*M\otimes \mathcal{E})$. The bundle $T^* M$ is a Riemannian bundle with a metric given by the inverse of the metric $g$, as well as $\mathcal{E}$, for which we have defined the metric in~\eqref{eq:scalar_preproduct}. Therefore, $T^*M\otimes \mathcal{E}$ is also a Riemannian bundle. We need this fact to define the scalar product $\langle \nabla^\tau F,\nabla^\tau G\rangle_2$. In terms of a local orthonormal frame of vector fields $(E_1,\ldots,E_n)$, we have
\begin{equation}\label{NN2}
 {\rm Sc}(\nabla^\tau F,\nabla^\tau G)(x)
:=\sum_{i=1}^n  {\rm Sc}(\nabla_{E_i}^\tau F,\nabla_{E_i}^\tau G)(x).
\end{equation}
In addition to~\eqref{eq:L2_norm_sections} we set
$$
\inner{\nabla^\tau F,\nabla^\tau G}_2:
=  \int_{M}  {\rm Sc}(\nabla^\tau F,\nabla^\tau G)(x) \dvol
$$
and we define the scalar product
$$
\inner{F,G}_{H^1}:= \inner{ F,G}_2+ \inner{\nabla^\tau F,\nabla^\tau G}_2.
$$
The norm associated with $\inner{F,G}_{H^1}$ is defined by
	\begin{equation}\label{normHone}
		\norm{F}_{H^1}^2 := \inner{ F,F}_2+ \inner{\nabla^\tau F,\nabla^\tau F}_2= \|F\|_2^2+
\|\nabla^\tau F\|_2^2.
	\end{equation}

We define $H^1(M,\mathcal{E})$ as the completion of $C^\infty(M,\mathcal{E})$ with respect to this norm.
	Additionally, we can define the space $H_0^1(M,\mathcal{E})$ as the closure of $C_{cc}^\infty(M,\mathcal{E})$ in the norm $\norm{\cdot}_{H^1}$.

\section{The $S$-resolvent estimates with homogeneous Dirichlet boundary condition}\label{RESOLV-hom-dir}

	In this section, we obtain a similar results as in \cite{beschastnyi2025s}, but for a compact spin manifold $M$ with boundary $\partial M$ and any Riemannian metric $g$ that we denote by $(M,g)$. We study the spectral properties on the $S$-spectrum for the spinor Dirac operator $D$, acting on $C^\infty(M,\mathcal E)$.
	The spectral problem we consider refers  to the $S$-spectrum
so it involves also the square of the Dirac operator $D$:
	\begin{equation} \label{EQ:Op_Qs}
		Q_s(D) = D^2 - 2s_0 D + \modulo{s}^2.
	\end{equation}

	We will study the elliptic boundary value problem with homogeneous Dirichlet boundary condition
	\begin{equation} \label{EQ:BVP_Dirichlet}
		\begin{cases}
			Q_s(D)F = f, \quad \text{in } M \\
			F\vert_{\partial M} = 0,
		\end{cases}
	\end{equation}
	and the invertibility of the operator $Q_s(D)$ in (\ref{EQ:Op_Qs}) in the weak sense. That is, we ask wether the problem (\ref{EQ:BVP_Dirichlet}) admits a unique weak solution for every $f\in L^2(M,\mathcal E)$. Since the operator $Q_s(D)$ is elliptic, by standard properties of elliptic equations, the weak solution is also a strong solution.

	In order to derive the weak formulation, we have to integrate by parts the second order part of $Q_s(D)$. By~\eqref{eq:laplace_connection} we have
    $$
    Q_s(D)F = \Delta_\tau F + \frac{\kappa(x)}{4}F - 2s_0DF + |s|^2 F.
    $$
    Next we apply the Green's-like formula for $-\Delta_\tau$ of Theorem \ref{Th:Green_like_formula}. Therefore, for $F,G\in H_0^1(M,\mathcal E)$ and for every $s\in\BR^{n+1}$, we consider the following bilinear form associated with the boundary value problem (\ref{EQ:BVP_Dirichlet})
	\begin{align} \label{EQ:Form_general}
		q_s(F,G) & = \int_M \Sc{\overline{\nabla^\tau F} \nabla^\tau G} \: \dif\text{vol}
+ \frac{1}{4}\int_{M} \kappa(x)\Sc{\overline{F}G} \: \dif\text{vol} \nonumber \\ & \hspace*{4cm} - 2s_0\int_{M} \Sc{\overline{DF}G} \: \dif\text{vol} + \modulo{s}^2\int_{M} \Sc{\overline{F}G} \: \dif\text{vol}
	\end{align}
	obtained by applying Theorem \ref{Th:Green_like_formula} to the second order part of $Q_s(D)$ in (\ref{EQ:BVP_Dirichlet}), i.e. to $\Delta_\tau$. Note that we are considering the Dirichlet problem (\ref{EQ:BVP_Dirichlet}), so the boundary term of (\ref{Eq:Green's}) is zero.
	
	\begin{problem}\label{PROB_DIR_FULL_NORM_DIR}
		Let $D$ be the Dirac operator in $\eqref{Eq:General_Dirac}$ and $\kappa$ the scalar curvature. Consider the inner product $\inner{F,G}_2$ in $L^2$. We write the bilinear form $q_s(F,G)$ defined in (\ref{EQ:Form_general}) as
		\begin{equation} \label{EQ:Form_gen_inner}
			q_s(F,G) = \inner{\nabla^\tau F,\nabla^\tau G}_2 + \frac{1}{4}\inner{\kappa F,G}_2 - 2s_0\inner{DF,G}_2 + \modulo{s}^2\inner{F,G}_2,
		\end{equation}
		with $\dom (q_s) := H_0^1(M,\mathcal E)\times H_0^1(M,\mathcal E)$.
Show that for some values of the spectral parameter $s\in\BR^{n+1}$, for every $f\in L^2(M,\mathcal E)$ there exists a unique solution $F_f\in H_0^1(M,\mathcal E)$ such that
		\begin{equation}\label{WEAK_EQUATION_DIRIC}
			q_s(F_f,G) = \inner{f,G}_2, \quad \text{for all } G\in H_0^1(M,\mathcal E).
		\end{equation}
		Furthermore, determine $L^2$- and $\nabla^\tau$-estimates of $F_f$, depending on the parameter $s\in\BR^{n+1}$.
	\end{problem}
\begin{remark}
For Problem \ref{PROB_DIR_FULL_NORM_DIR} we endow $H_0^1(M,\mathcal E)$ with the norm
given by  (\ref{normHone}), i.e.,
\begin{equation}\label{NOTES_NORM_H_ONE}
\|F\|^2_{H_0^1(M,\mathcal E)}:=\inner{F,F}_2+\inner{\nabla^\tau F,\nabla^\tau F}_2.
\end{equation}
The advantage of using this norm, instead of the norm given by $\inner{\nabla^\tau F,\nabla^\tau F}_2$, is that we do not use the Poincaré constant,  and the estimates of the region of the resolvent set are obtained in term of explicit constants.
\end{remark}
\begin{remark}\label{LINFUNCCONST} The linear functional $\inner{f,G}_2$ is defined via a given $L^2$-function with values in the Clifford-Hilbert module, precisely it is
$$
\inner{f,G}_2:=\int_{M} \Sc{\overline{f}G} \: \dif\text{vol}.
$$
By Lemma \ref{lem_Properties} iv) and the definition of the scalar product we have
$$
|\inner{f,G}_2|=|{\rm Sc}\langle f,G\rangle|\leq \|f\|_2\|G\|_2
\leq  \|f\|_2\|G\|_{H_0^1}.
$$
\end{remark}

\begin{lemma}\label{LemmaDF64}
Let $D$ be the Dirac operator given locally by (\ref{Eq:General_Dirac}). Then, for all $F\in H^1(M,\mathcal E)$ and $G\in L^2(M,\mathcal E)$
\begin{equation}\label{ineqLemma}
|\langle DF,G \rangle_2| \leq C_n \|\nabla^{\tau} F\|_2 \, \| G\|_2, \ \ \ \ {\rm where} \ \ \ C_n:=\sqrt{n}.
\end{equation}
\end{lemma}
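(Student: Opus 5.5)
The plan is to work pointwise and then integrate, first proving the estimate for smooth sections $F\in C^\infty(M,\mathcal E)$ and then extending to $H^1(M,\mathcal E)$ by density, since $H^1$ is by definition the completion of smooth sections in the norm \eqref{normHone}. Fix a point $x\in M$ and a local orthonormal frame $E_1,\dots,E_n$ near $x$. By \eqref{Eq:General_Dirac} we have pointwise $DF(x)=\sum_{i=1}^n e_i\nabla^\tau_{E_i}F(x)$, and by \eqref{NN2} the pointwise squared norm of $\nabla^\tau F$ is $\sum_{i=1}^n |\nabla^\tau_{E_i}F(x)|^2$, where $|\cdot|$ denotes the fiber norm coming from \eqref{eq:scalar_preproduct}.

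The key pointwise step uses that each $e_i$ is a unit vector in $\mathbb{R}^n\subset\mathbb{R}^{n+1}$. By Lemma \ref{lem_Properties} ii) (applied fiberwise via the equivariant representative, since left multiplication by $e_i$ is an isometry of $\mathbb{R}_n$ with the norm $\mathrm{Sc}(\bar u u)^{1/2}$), we get $|e_i\nabla^\tau_{E_i}F(x)|=|\nabla^\tau_{E_i}F(x)|$. The triangle inequality followed by Cauchy--Schwarz in $\mathbb{R}^n$ then gives
\begin{equation*}
|DF(x)|\le \sum_{i=1}^n |\nabla^\tau_{E_i}F(x)| \le \sqrt{n}\Big(\sum_{i=1}^n|\nabla^\tau_{E_i}F(x)|^2\Big)^{1/2}.
\end{equation*}
The right-hand side is frame independent, so this holds globally. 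Squaring and integrating against $\dvol$ yields $\|DF\|_2\le \sqrt n\,\|\nabla^\tau F\|_2$.

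We then conclude with Cauchy--Schwarz for the real inner product: $\langle DF,G\rangle_2=\int_M \mathrm{Sc}(DF,G)\,\dvol$, and by Lemma \ref{lem_Properties} iv) pointwise, followed by Cauchy--Schwarz in $L^2$, we obtain $|\langle DF,G\rangle_2|\le\|DF\|_2\|G\|_2\le C_n\|\nabla^\tau F\|_2\|G\|_2$. For the extension, $D$ maps smooth sections to $L^2$, and the inequality shows that $D$ is bounded from $(C^\infty,\|\cdot\|_{H^1})$ to $L^2$. Hence $D$ extends continuously to $H^1$, and both sides of \eqref{ineqLemma} are continuous in $F$ in the $H^1$ norm.

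The main obstacle is the pointwise isometry claim $|e_i u|=|u|$ in the bundle $\mathcal E$: one must check that Clifford multiplication by the section $e_i$, represented equivariantly, is norm-preserving for the metric \eqref{eq:scalar_preproduct}. Locally, in the trivialization given by a section of $P_{Spin}$ over the frame, $e_i$ is represented by the constant generator $e_i$ and $u$ by an $\mathbb{R}_n$-valued function. We then have $\mathrm{Sc}(\overline{e_iu}\,e_iu)=\mathrm{Sc}(\bar u\,\bar e_i e_i u)=\mathrm{Sc}(\bar u u)$, since $\bar e_i e_i=1$, which settles the claim.
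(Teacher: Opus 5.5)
Your proposal is correct and follows the paper's proof. Cauchy--Schwarz reduces the claim to $\|DF\|_2\le\sqrt n\,\|\nabla^\tau F\|_2$, which both you and the paper get from three ingredients: the triangle inequality, the fact that Clifford multiplication by each unit section $e_i$ is a fibrewise isometry (checked in a local spin trivialization), and the discrete inequality $(\sum_{i=1}^n a_i)^2\le n\sum_{i=1}^n a_i^2$. The remaining differences are cosmetic: you globalize through the frame-independence of the pointwise inequality instead of a partition of unity, you choose a trivialization adapted to the frame instead of conjugating by $s_\alpha$, and you spell out the density extension from smooth sections to $H^1(M,\mathcal E)$, which the paper leaves implicit.
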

\begin{proof}
    By the Cauchy-Schwarz inequality we have
    $$
    |\langle DF,G \rangle_2| \leq \|DF\|_2 \|G\|_2,
    $$
    so it only remains to prove that
    $$
    \|DF\|_2 \leq \sqrt n \|\nabla^\tau F \|_2.
    $$
    Take a partition of unity $\varphi_\alpha$ subjected to an atlas $U_\alpha$. On each chart choose an orthonormal frame $E_1^\alpha,\dots E_n^\alpha$ and $e_1^\alpha,\dots,e_n^\alpha$ the corresponding sections of the Clifford bundle $Cl(TM)$
    \begin{align*}
    \|DF\|_2^2 &= \int_M \|DF(x)\|^2 \dif\text{vol} = \sum_{\alpha} \int_{U_\alpha} \varphi_\alpha(x) \|DF(x)\|^2 \dif\text{vol}
    \\
    &
    = \sum_{\alpha}\int_{U_\alpha } \varphi_\alpha(x) \|\sum_{i=1}^n e_i^\alpha(x) \nabla^\tau_{E_i^\alpha} F(x)\|^2 \dif\text{vol}.
    \end{align*}
    Let us consider a single term in the integrand. By triangle inequality we have
    $$
    \|\sum_{i=1}^n e_i^\alpha(x) \nabla^\tau_{E_i^\alpha} F(x)\| \leq \sum_{i=1}^n \| e_i^\alpha(x) \nabla^\tau_{E_i^\alpha} F(x)\|   \leq \sum_{i=1}^n\|e_i^\alpha(x)\| \|\nabla^\tau_{E_i^\alpha} F(x)\|,
    $$
    where in the last inequality $\|e_i^\alpha(x)\|$ stands for the norm of the multiplication operator by $e_i^\alpha(x)$. Let us compute this norm and prove that it equals $1$.\\ On a chart $U_\alpha$ we can consider a trivialization of the spin bundle $P_{spin}$ and of the Clifford bundle $Cl(TM)$. Therefore, we have constant sections $e_1,\dots,e_n$ of $Cl(TM)$. Since both a $e_i^\alpha(x)$ and $e_i$ are two orthonormal bases of $\BR^n \subset \BR_n$, there must exist a section $s_\alpha\in C^\infty(U_\alpha, P_{Spin}(TU_\alpha))$ such that
    $$
    e_i^\alpha(x) = \sigma(s_\alpha(x)^{-1})e_i=s_\alpha(x)^{-1}e_i s_\alpha (x), \qquad i=1,\dots,n.
    $$
    The section $s_\alpha$ is essentially a spin-valued function after we have chosen a trivialization, and we view the spin group as a subset of the Clifford algebra.   Recalling that $\overline{ab}=\bar b \bar a$ and $s_\alpha \overline{s_\alpha} =1$, i.e. $s_\alpha^{-1}=\overline{s_\alpha }$, for any $v\in \BR_n$ we have
    \begin{align*}
    \|e_i^\alpha(x) v\|^2 & = Sc(\overline{e_i^\alpha(x) v} \cdot e_i^\alpha(x) v)= Sc(\,\bar v\,\overline{e_i^\alpha(x)}\, e_i^\alpha(x)\, v)\\
    & = Sc(\bar v\overline {\sigma(s_\alpha(x)^{-1})e_i }\, \sigma(s_\alpha(x)^{-1})e_i\,  v)
    \\
    &
    =Sc (\overline v s_\alpha(x)^{-1} \bar e_i s_\alpha(x) s_\alpha(x)^{-1} e_i s_\alpha(x) v)
    \\
    &
    = Sc (\overline v v)
    \\
    & = \|v\|^2.
    \end{align*}
    Therefore, $\|e_i^\alpha(x)\| = 1$. We now use the discrete version of the Cauchy-Schwarz inequality $(\sum_{i=1}^n a_i)^2\leq n\sum_{i=1}^n a_i^2$, to compute

\begin{align*}
        \|DF\|_2^2 &\leq \sum_{\alpha}\int_{U_\alpha } \varphi_\alpha(x)  \left(\sum_{i=1}^n \| \nabla^\tau_{E_i^\alpha} F(x)\|\right)^2 \dif\text{vol}
        \\
        &\leq  \sum_{\alpha}\int_{U_\alpha } \varphi_\alpha(x)\,  n\sum_{i=1}^n \| \nabla^\tau_{E_i^\alpha} F(x)\|^2 \dif\text{vol}  \\
        &=  n\sum_{\alpha}\int_{U_\alpha }\varphi_\alpha(x)  \| \nabla^\tau F(x)\|^2 \dif\text{vol} \\
        &= n\int_M (\sum_{\alpha} \varphi_\alpha(x)) \| \nabla^\tau F(x)\|^2 \dif\text{vol} \\
        &= n\int_M \| \nabla^\tau F(x)\|^2 \dif\text{vol} = n\| \nabla^\tau F\|^2_2,
\end{align*}
where we have used $\sum_{i=1}^n \|\nabla^\tau_{E_i^\alpha} F(x)\|^2=\|\nabla^\tau F(x)\|^2$ and the fact that $\{\varphi_\alpha\}$ are a partition of unity. Taking the square root of the obtained inequality gives the assertion.
\end{proof}

We are now ready to state and prove the following main result:

\begin{theorem} \label{Th:Main_general}
Let $(M,g)$ be a compact spin manifold $M$ with boundary $\partial M$ and a Riemannian metric $g$ with scalar curvature $\kappa(x)$ and let
\begin{equation}\label{kappabar}
k_{min}:= \frac{1}{4}\min_{x\in M}\kappa(x).
\end{equation}
Let $s\in\BR^{n+1}$, and assume that
\begin{align}\label{eq:STIME_SU_ESSE_DIR}
|s|^2+k_{min}- |s_0|^2C_n^2>0.
\end{align} where $C_n$ is given by (\ref{ineqLemma}).
Then,
 for every $f\in L^2(M,\mathcal{E})$ there exists a unique $F_f\in H_0^1(M,\mathcal{E})$ such that
		\begin{equation*}
			q_s(F_f,G) = \inner{f,G}_{2}, \qquad \text{for all } G\in H_0^1(M,\mathcal{E}).
		\end{equation*}
Moreover, for $s\in \mathbb{R}^{n+1}$ that satisfies estimate (\ref{eq:STIME_SU_ESSE_DIR}), we have
	\begin{equation}\label{quatSresorig_DIR_EST_new}
\|S_R^{-1}(s,D)f\|_2\leq  \frac{1}{\sqrt{|s|^2+k_{\rm min}}-|s_0|C_n}\left(C_n+\frac{|s|}{\sqrt{|s|^2+k_{\rm min}}}\right)
\norm{f}_2.
		\end{equation}

	\end{theorem}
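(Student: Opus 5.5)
The plan is Lax--Milgram on $H_0^1(M,\mathcal{E})$ (a real Hilbert space with the scalar product $\inner{\cdot,\cdot}_{H^1}$), followed by a priori estimates read off from the weak equation with $G=F_f$.

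\emph{Continuity of $q_s$.} Each term of \eqref{EQ:Form_gen_inner} is bounded by a constant times $\norm{F}_{H^1}\norm{G}_{H^1}$. This follows from Cauchy--Schwarz, the boundedness of $\kappa$ on the compact $M$, and Lemma \ref{LemmaDF64} for the term $2s_0\inner{DF,G}_2$. The right-hand side $G\mapsto\inner{f,G}_2$ is continuous by Remark \ref{LINFUNCCONST}.

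\emph{Coercivity.} For $F\in H_0^1$, write $a=\norm{\nabla^\tau F}_2$ and $b=\norm{F}_2$. Since $\kappa/4\ge k_{min}$, Lemma \ref{LemmaDF64} gives
\[
q_s(F,F)\ge a^2-2|s_0|C_n ab+(|s|^2+k_{min})b^2 .
\]
Set $\lambda=\sqrt{|s|^2+k_{min}}$. Hypothesis \eqref{eq:STIME_SU_ESSE_DIR}, i.e.\ $|s_0|C_n<\lambda$, means the discriminant $|s_0|^2C_n^2-\lambda^2$ of this quadratic form in $(a,b)$ is negative, so the form is positive definite. Hence $q_s(F,F)\ge c(a^2+b^2)=c\norm{F}_{H^1}^2$, where $c>0$ is the smallest eigenvalue of $\begin{pmatrix}1&-|s_0|C_n\\-|s_0|C_n&\lambda^2\end{pmatrix}$. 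Lax--Milgram then gives existence and uniqueness of $F_f$. The form is not symmetric because of the $D$ term, but Lax--Milgram does not need symmetry.

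\emph{Quantitative bounds.} To get constants that match \eqref{quatSresorig_DIR_EST_new}, I would rewrite the lower bound as $(a-\lambda b)^2+2(\lambda-|s_0|C_n)ab$, which shows
\[
q_s(F,F)\ge 2(\lambda-|s_0|C_n)ab .
\]
Testing the weak equation with $G=F_f$ gives $q_s(F_f,F_f)=\inner{f,F_f}_2\le\norm{f}_2 b$. This yields
\[
a\le \frac{\norm{f}_2}{2(\lambda-|s_0|C_n)} .
\]
Alternatively, write the lower bound as $(\lambda-|s_0|C_n)(a^2/\lambda+\lambda b^2)$, using $a^2+\lambda^2b^2\ge 2\lambda ab$ together with $\lambda\ge|s_0|C_n$. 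This gives both
\[
\lambda b\le \frac{\norm{f}_2}{\lambda-|s_0|C_n}
\qquad\text{and}\qquad
a\le \frac{C\norm{f}_2}{\lambda-|s_0|C_n}
\]
with an explicit constant $C$. I would choose whichever split reproduces the stated constants: the target is $\norm{\nabla^\tau F_f}_2\le \norm{f}_2/(\lambda-|s_0|C_n)$ and $\norm{F_f}_2\le \norm{f}_2/(\lambda(\lambda-|s_0|C_n))$.

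\emph{The resolvent estimate.} Identify $Q_s(D)^{-1}f=F_f$ in the weak sense. Then
\[
S_R^{-1}(s,D)f=(\overline{s}\mathcal{I}-D)F_f=F_f\overline{s}-DF_f ,
\]
where the right multiplication by $\overline s$ should be read in the module sense (the key point is only that it commutes with $D$ and $\norm{F\overline s}=|s|\norm{F}$). By Lemma \ref{lem_Properties} ii) and Lemma \ref{LemmaDF64},
\[
\norm{S_R^{-1}(s,D)f}_2\le |s|\,b+C_n a .
\]
Inserting the two bounds gives exactly $\frac{1}{\lambda-|s_0|C_n}\bigl(C_n+\frac{|s|}{\lambda}\bigr)\norm{f}_2$.

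The main obstacle is getting the sharp constants in the $a$ and $b$ bounds simultaneously from a single scalar inequality. I would first try to prove $\lambda b\le\norm f_2/(\lambda-|s_0|C_n)$ from $a^2-2|s_0|C_nab+\lambda^2b^2\le\norm f_2 b$ by minimizing the left side in $a$, and bound $a$ similarly. If the sharp constant for $a$ fails, I would settle for the natural constant arising from the quadratic inequality.

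A secondary issue is justifying that $S_R^{-1}(s,D)$ acts on $L^2$ through the weak solution, i.e.\ that $F_f\in H^1$ suffices to make sense of $DF_f$. This holds by Lemma \ref{LemmaDF64}, and elliptic regularity gives $F_f\in\dom(D^2)$.
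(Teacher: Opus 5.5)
Your argument is correct, and the obstacle you flag at the end is not there. Write $\mu:=|s_0|C_n$. Your second identity,
\[
a^2-2\mu ab+\lambda^2b^2=(\lambda-\mu)\Big(\frac{a^2}{\lambda}+\lambda b^2\Big)+\frac{\mu}{\lambda}(a-\lambda b)^2 ,
\]
is exactly the paper's Young inequality evaluated at its choice $\delta^*=\lambda/|s_0|$. There $A(\delta^*)=(\lambda-\mu)/\lambda$ and $B(\delta^*)=\lambda(\lambda-\mu)$. The paper gets the stated constants just as you would. Dropping the $a$-term gives $\|F_f\|_2\le\|f\|_2/(\lambda(\lambda-\mu))$. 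Dropping the $b$-term and inserting that bound gives $\|\nabla^\tau F_f\|_2\le\|f\|_2/(\lambda-\mu)$, so your $C$ equals $1$.

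More importantly, you do not need to extract both bounds from one split. Each is a separate consequence of $a^2-2\mu ab+\lambda^2b^2\le\|f\|_2\,b$. Your first identity already gives $a\le\|f\|_2/(2(\lambda-\mu))$, which is below the target. Completing the square in $a$ gives $(\lambda^2-\mu^2)b^2\le\|f\|_2\,b$. Altogether
\[
\|S_R^{-1}(s,D)f\|_2\le\frac{1}{\lambda-\mu}\Big(\frac{C_n}{2}+\frac{|s|}{\lambda+\mu}\Big)\|f\|_2 ,
\]
which is strictly stronger than the stated bound and so implies it.

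Compared with the paper: it keeps a free Young parameter $\delta$ and derives $\|F_f\|_2\le\|f\|_2/B(\delta)$ and $\|\nabla^\tau F_f\|_2\le\|f\|_2/\sqrt{A(\delta)B(\delta)}$. It then minimizes a single function $G(\delta)$ and evaluates it at $\delta^*$. That route loses a factor $2$ in the gradient bound, because it discards $B(\delta)b^2$ and bounds $b$ separately, whereas $\max_b(\|f\|_2b-Bb^2)=\|f\|_2^2/(4B)$. It also ties both bounds to one $\delta$. Finally, $\delta^*=\lambda/|s_0|$ is undefined when $s_0=0$. In that case the interval $I$ is unbounded and $G$ does not blow up at its right end, so a limit $\delta\to\infty$ is needed.

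Your parameter-free identities are shorter, treat $s_0=0$ uniformly, and give better constants. The paper's formulation only buys a one-parameter family of bounds that it does not exploit.

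Two small corrections:
\begin{itemize}
\item $q_s$ is in fact symmetric on $H_0^1$, since $D$ is formally self-adjoint and the boundary terms vanish. This is harmless, as Lax--Milgram does not need symmetry.
\item In the last step only $\|F_f\overline{s}\|_2=|s|\,\|F_f\|_2$ is used; commutation with $D$ plays no role. Reading $\overline{s}$ as right multiplication is nonetheless the natural choice, since that action is well defined on $\mathcal{E}$.
\end{itemize}
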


	\begin{proof}
		We will verify that the bilinear form $q_s$ satisfies the assumptions of Lax-Milgram lemma.
	First of all, the continuity of the bilinear form $q_s$ defined in (\ref{EQ:Form_gen_inner}) follows from the estimates
\begin{equation} \label{EQ:Form_gen_inner2}
			|q_s(F,G)|\leq  |\inner{\nabla^\tau F,\nabla^\tau G}_2| + \frac{1}{4}|\inner{\kappa F,G}_2| + 2|s_0| |\inner{DF,G}_2| + \modulo{s}^2|\inner{F,G}_2|.
		\end{equation}
		Using Lemma \ref{LemmaDF64}, and the fact that $\kappa$ is a smooth real valued function on a compact manifold,
and therefore attains minimum $\kappa_{\rm min}$ and maximum $\kappa_{\rm max}$, we have that
$$
\frac 14|\kappa(x)|\leq k_m:=\max(\frac 14|\kappa_{\rm min}|, \frac 14|\kappa_{\rm max}|),
$$
 we have
		\begin{align*}
			\modulo{q_s(F,G)}  &\leq \|\nabla^{\tau} F\|_2 \, \| \nabla^\tau G\|_2 +
k_m\norm{F}_2\norm{G}_2
\\
&+ 2\modulo{s_0}C_n\, \norm{\nabla^\tau F}_2\norm{G}_2 + \modulo{s}^2\norm{F}_2\norm{G}_2.
\end{align*}
Therefore,  the form
$$
q_s(\cdot,\cdot):H_0^1(M,\mathcal{E})\times H_0^1(M,\mathcal{E}) \to \mathbb{R}
$$
is continuous since
there exists a positive constant $C(\modulo{s},\modulo{s_0},\kappa, M, n,\mu)$ such that
\begin{align*}
|q_s(F,G)|  \leq  C(\modulo{s},\modulo{s_0}, k_m,n)\norm{F}_{H_0^1}\norm{G}_{H_0^1},
\ \ {\rm for\ all}\ \  F,G\in H_0^1(M,\mathcal{E}).	
	\end{align*}
Let us consider  the coercivity of $q_s$. Setting $F=G$ in the form $q_s(F,G)$ and using
Lemma \ref{LemmaDF64}, we have
\begin{align} \label{Eq:Coer_General_nonneg}
			q_s(F,F) & = \inner{\nabla^\tau F,\nabla^\tau F}_2  + \frac{1}{4} \inner{\kappa  F,F}_2 - 2s_0\inner{DF ,F}_2
  + |s|^2\inner{F ,F}_2  \nonumber \\ & \geq \norm{\nabla^\tau F}_2^2 + k_{\rm min}\norm{F}_2^2 - 2|s_0|C_n\, \norm{\nabla^\tau F}_2\norm{F}_2 + |s|^2\norm{F}_2^2,
		\end{align}
where we have used that
$$
\int_M \Sc{\overline{\kappa F}F} dvol \geq \min_{x\in M}{\kappa(x)} \int_M \Sc{\overline{F}F} dvol
$$
 and $k_{\rm min}=\frac 14 \min_{x\in M} \kappa(x)$. Using Young's inequality, for some $\delta >0$, we further obtain
		\begin{equation*}
			\norm{\nabla^\tau F}_2 (|s_0|\norm{F}_2) \leq \frac{1}{2\delta}\norm{\nabla^\tau F}_2^2 + \frac{\delta}{2}|s_0|^2\norm{F}_2^2,
		\end{equation*}
so that we can estimate the lower bound for \eqref{Eq:Coer_General_nonneg} by
		\begin{align} \label{Eq:lower_bound_general_nonneg}
			q_s(F,F) & \geq \norm{\nabla^\tau F}_2^2 + k_{min}\norm{F}_2^2
- 2C_n\p{\frac{1}{2\delta}\norm{\nabla^\tau F}_2^2
+ \frac{\delta}{2}|s_0|^2\norm{F}_2^2} + |s|^2\norm{F}_2^2 \nonumber
\\ &
 \geq \Big(1-\frac{C_n}{\delta}\Big)
 \norm{\nabla^\tau F}_2^2 + \p{|s|^2 +k_{min}  - |s_0|^2
 \delta C_n}\norm{F}_2^2.
		\end{align}
The coercivity requires that two coefficients into parenthesis in (\ref{Eq:lower_bound_general_nonneg}) must be positive and so
\begin{equation}\label{AandB_DELTA}
A(\delta):=1-\frac{C_n}{\delta}>0, \qquad B(\delta):=|s|^2+k_{min}  - |s_0|^2\delta C_n >0.
\end{equation}
Let us check when such a $\delta$ exists. If $s_0 = 0$, then the only constraint on $\delta$ is given by $A(\delta)>0$, which is satisfied for every $\delta> C_n$. In addition, we must have $|s|^2 + k_{\min}>0$, which coincides with condition~\eqref{eq:STIME_SU_ESSE_DIR} in this case.

Next assume that $s_0 \neq 0$. Then the two inequalities \eqref{AandB_DELTA} yield
\begin{equation}
    \label{eq:range_for_delta_dirichlet}
    C_n <\delta < \frac{|s|^2 + k_{\min}}{ C_n|s_0|^2}.
\end{equation}
Such a $\delta$ will exist if and only if
$$
C_n < \frac{|s|^2 + k_{\min}}{ C_n|s_0|^2} \iff |s|^2+k_{\rm min}  - |s_0|^2 C_n^2 >0
$$
which gives exactly~\eqref{eq:STIME_SU_ESSE_DIR}.

 Hence, we have proven that $q_s$ is coercive in $H_0^1(M,\mathcal{E})$ when $s\in \mathbb{R}^{n+1}$ satisfies the inequality (\ref{eq:STIME_SU_ESSE_DIR}).
 Fixing now any function $f\in L^2(M,\mathcal{E})$, we can consider the corresponding functional
		\begin{equation*}
			\varphi_f(G) := \inner{f,G}_2, \quad G\in H_0^1(M,\mathcal{E})
		\end{equation*}
as in Problem \ref{PROB_DIR_FULL_NORM_DIR}.
		Then by Remark \ref{LINFUNCCONST}, this functional is bounded in $H_0^1(M,\mathcal{E})$, in fact
		\begin{equation*}
			\modulo{\varphi_f(G)} = \modulo{\inner{f,G}_{2}} \leq \norm{f}_2\norm{G}_2 \leq \norm{f}_2\norm{G}_{H_0^1}, \quad G\in H_0^1(M,\mathcal{E}).
		\end{equation*}
		Hence, the assumptions of Lax Milgram lemma are satisfied and so there exists a unique weak solution $F_f\in H_0^1(M,\mathcal{E})$ such that
		\begin{equation} \label{Eq:test_funct_general_nonneg}
			q_s(F_f,G) = \varphi_f(G) = \inner{f,G}_2, \quad \text{for all } G\in H_0^1(M,\mathcal{E}).
		\end{equation}
Our next task is to prove \eqref{quatSresorig_DIR_EST_new}.
To this end we proceed in various steps. \\
Step 1. First of all we take a fixed $\delta$ in the range \eqref{eq:range_for_delta_dirichlet}, then we test (\ref{Eq:test_funct_general_nonneg}) with $G = F_f$.
Still considering  Problem \ref{PROB_DIR_FULL_NORM_DIR} and using also Lemma \ref{lem_Properties} iv) we have the estimate
		\begin{equation} \label{Eq:norm_sc_bound_general_nonneg}
			|q_s(F_f,F_f)| = |\inner{f,F_f}_2| \leq \norm{f}_2\norm{F_f}_2,
		\end{equation}
and moreover, using \eqref{Eq:lower_bound_general_nonneg} and \eqref{AandB_DELTA} we get
        \begin{equation}
            \label{eq:coercivity_bounded_dirichlet}
            A(\delta) \norm{\nabla^\tau F_f}_2^2 + B(\delta) \norm{F_f}_2^2 \leq q_s(F_f,F_f) \leq \norm{f}_2\norm{F_f}_2.
        \end{equation}
        Thus we deduce
\begin{equation}
    \label{eq:norm_estimate_dirichlet}
    \norm{F_f}_2 \leq \frac{\norm{f}_2}{B(\delta)},
\end{equation}
        and from the same inequality we also have
        $$
        A(\delta) \norm{\nabla^\tau F_f}_2^2 \leq \norm{f}_2\norm{F_f}_2.
        $$
Combining this with~\eqref{eq:norm_estimate_dirichlet} we get
\begin{equation}
    \label{eq:grad_norm_estimate_dirichlet}
    \norm{\nabla^\tau F_f}_2 \leq \frac{\norm{f}_2}{\sqrt{A(\delta)B(\delta)}}.
\end{equation}
Step 2.
Note that the inequality~\eqref{eq:coercivity_bounded_dirichlet} is actually a family of inequalities depending on $\delta$ in the range~\eqref{eq:range_for_delta_dirichlet} which we can denote as
\[
  I \;:=\; \left(C_n,\;\frac{|s|^2+k_{\min}}{|s_0|^2 C_n}\right).
\]
 Therefore, to optimize the quantities in~\eqref{eq:norm_estimate_dirichlet} and \eqref{eq:grad_norm_estimate_dirichlet}, we fix the parameters $C_n>0$, $s_0\in\mathbb{R}$, $s\in\mathbb{R}^{n+1}$,
and $k_{\min}\in\mathbb{R}$ with
\[
  |s|^2+k_{\min}-|s_0|^2 C_n^{\,2}>0.
\]

Since from the solution of the Dirichlet boundary conditions problem we
have
$
F_f = Q_s(D)^{-1}f,
$
then the equations above give us
$$
\norm{ Q_s(D)^{-1}f}_2\leq\frac{\norm{f}_2}{B(\delta)}, \ \ \  \ \ \
\norm{\nabla^\tau Q_s(D)^{-1}f}_2 \leq  \frac{\norm{f}_2}{\sqrt{A(\delta)B(\delta)}},
$$
where the Dirac operator $D=\sum_{i=1}^ne_i\nabla^\tau_{E_i}$ is given by
 (\ref{Eq:General_Dirac}) so the $S$-resolvent is
\begin{equation}\label{quatSresorig_DIR}
S_R^{-1}(s,D):=-(D-\overline{s}\mathcal{I})Q_s(D)^{-1}
=-(\sum_{i=1}^ne_i\nabla^\tau_{E_i}-\overline{s}\mathcal{I})Q_s(D)^{-1}
\end{equation}
and taking the norm, and using Lemma \ref{LemmaDF64}, we obtain
\begin{align*}
\|S_R^{-1}(s,D)f\|_2&\leq
\|(\sum_{i=1}^ne_i\nabla^\tau_{E_i})Q_s(D)^{-1}f\|_2+\|\overline{s}Q_s(D)^{-1}f\|_2
\\
&
\leq\Big(
 \frac{ C_n}{\sqrt{A(\delta)B(\delta)}}
 +\frac{  |s|}{B(\delta)}\Big)
\norm{f}_2.
\end{align*}

Step 3.
Let us set
$$
G(\delta):=\frac{ C_n}{\sqrt{A(\delta)B(\delta)}}
 +\frac{  |s|}{B(\delta)}
$$
and let us find suitable bounds for it.
We note that for $\delta \to C_n^+$ and $\delta \to \left(\frac{|s|^2+k_{\min}}{|s_0|^2 C_n}\right)^-$ the function $G$ tends to $+\infty$.
Since $G$ is continuous on the open interval $I$ and tends to $+\infty$ at
both endpoints, it admits a global minimum on $I$, attained at an
interior point $\tilde{\delta}\in I$.  The point $\tilde\delta$ can be, in principle, computed by studying $G'(\delta)$. Standard but cumbersome computations show that  $G'(\delta)=0$ gives an algebraic equation of degree $5$ whose roots are not immediate to compute so we proceed to provide bounds for $\tilde\delta$.
\\
We observe that, since $G(\delta)$ is sum of two positive quantities, we have that
$$
\min_{\delta\in I}G(\delta)\geq \inf_{\delta\in I}\frac{ C_n}{\sqrt{A(\delta)B(\delta)}} + \inf_{\delta\in I} \frac{  |s|}{B(\delta)}= \frac{ C_n}{\alpha_2}+ \frac{  |s|}{\alpha_1},
$$
where $\alpha_1=\sup_{\delta\in I} B(\delta)$ and  $\alpha_2=\sup_{\delta\in I} \sqrt{A(\delta)B(\delta)}$.
 Since $B(\delta)$ is linear and decreasing in $\delta$, we get the supremum for $\delta$ tending to the left endpoint, namely:
$$
\alpha_1 = \modulo{s}^2  + k_{min} - \modulo{s_0}^2 C_n^2 .
$$
To determine $\alpha_2$ we note that $A(\delta)$ vanishes on the left limit of $\delta$ in~\eqref{eq:range_for_delta_dirichlet} and $B(\delta)$ on the right one. In addition, both functions are positive and smooth in the interval $I$. Therefore, there must be at least one maximum inside $I$. It is straightforward to check that maximum is attained at
$$
\delta^* = \frac{\sqrt{|s|^2+k_{\rm min}}}{|s_0|}.
$$
Plugging this into $A(\delta)B(\delta)$ gives
$$
\alpha_2 = \sqrt{|s|^2+k_{\rm min}}-|s_0|C_n.
$$

Thus we have the lower bound
$$
\min_{\delta\in I}G(\delta)> \frac{ C_n}{\sqrt{|s|^2+k_{\rm min}}-|s_0|C_n}+ \frac{  |s|}{\modulo{s}^2  + k_{\rm min} - \modulo{s_0}^2 C_n^2}.
$$

Moreover, since the point $\tilde\delta$ gives the minimum of $G(\delta)$, it is immediate that
$$
G(\tilde\delta)\leq G(\delta^*)=G(\frac{\sqrt{|s|^2+k_{\rm min}}}{|s_0|})
=\frac{C_n}{\sqrt{|s|^2+k_{\rm min}}-|s_0|C_n}+\frac{|s|}{|s|^2+k_{\rm min}- |s_0|C_n\sqrt{|s|^2+k_{\rm min}}}
$$
$$
=\frac{1}{\sqrt{|s|^2+k_{\rm min}}-|s_0|C_n}\left(C_n+\frac{|s|}{\sqrt{|s|^2+k_{\rm min}}}\right).
$$
To summarize the minimum $G(\tilde\delta)$ satisfies the estimates
$$
\frac{ C_n}{\sqrt{|s|^2+k_{\rm min}}-|s_0|C_n}+ \frac{  |s|}{\modulo{s}^2  + k_{\rm min} - \modulo{s_0}^2 C_n^2}
< G(\tilde\delta)
$$
$$
\leq\frac{1}{\sqrt{|s|^2+k_{\rm min}}-|s_0|C_n}\left(C_n+\frac{|s|}{\sqrt{|s|^2+k_{\rm min}}}\right).
$$
The right-hand side of these inequalities gives the bound in the assertion.
\end{proof}

\begin{remark}
The $S$-resolvent depends significantly on the sign of $k_{min}$. We have~\eqref{eq:STIME_SU_ESSE_DIR} equivalent to
$$
|s|^2+k_{min}- |s_0|^2C_n^2>0.
$$
Letting $s = s_0 + s_1J$, the condition $|s|^2 = s_0^2+s_1^2$ leads to
\begin{equation}
    \label{eq:resolvent_sets_condition}
    s_1^2 - (C_n^2-1)s_0^2 > -k_{min}.
\end{equation}
For the geometric picture below since $C_{n}=\sqrt{n}>1$ so that $C_{n}^{2}-1>0$ and the level sets of~\eqref{eq:resolvent_sets_condition} are hyperbolae.
In particular, there is a spectral gap when $k_{\min}>0$.  Further improvement on the resolvent set can be achieved by using Poincar\'e inequality.
\end{remark}

\medskip
\noindent
{\bf Discussion of the various cases.}

{\em Case 1.}
The assumption that the minimum scalar curvature is strictly positive, i.e., $k_{min}>0$, imposes a geometric restriction on the spectral parameter $s$ for the $S$-resolvent operator. This restriction is defined by the inequality:
\begin{equation}\label{CONWITHK}
(C_n^2-1)s_0^2-s_1^2<k_{min}.
\end{equation}

This inequality defines the region of existence in the complex plane $\mathbb{C}_J$ (where $s=s_0 + s_1 J$ for $J \in \mathbb{S}$).  Specifically, it represents the connected region, containing the origin bounded by the hyperbola of equation
$$
(C_n^2-1)s_0^2-s_1^2=k_{min},
$$
opening along the $s_{0}$-axis.
Since $C_n^2-1 > 0$ (recall the standing assumption $C_{n}>1$), the hyperbola intersects the $s_0$-axis of the complex plane $\mathbb{C}_J$ at the points:
$$
s_0=\pm\sqrt{\frac{k_{min}}{C_n^2-1}}.
$$
The origin lies in the resolvent set is visible directly: at $s=0$, the left-hand side of~\eqref{CONWITHK} equals $0<k_{\min}$.
This means that the spectral gap, which is the region where the $S$-resolvent is defined, satisfies the following condition on the real part of the spectral parameter:
$$
|\mathrm{Re}(s)| \,<\, \sqrt{\frac{k_{min}}{C_n^2-1}}.
$$

This inequality thus provides a quantitative bound for the spectral gap when the scalar curvature is strictly positive.

{\em Case 2.} When $k_{\min}:= \frac{1}{4}\min_{x\in M}\kappa(x) = 0$, we have a simplified condition for the existence of the $S$-resolvent, given by
\begin{equation}\label{STIME_SU_ESSE_DIR_NEW}
|s|^2 - |s_0|^2 C_n^2 > 0,
\end{equation}
which imposes a geometric restriction on the spectral parameter $s$.
Letting $s = s_0 + s_1 J$, the condition $|s|^2 = s_0^2 + s_1^2$ leads to:
\[
s_0^2 + s_1^2 - s_0^2 C_n^2 > 0.
\]
Since $C_n^2 - 1 > 0$, this inequality defines the region of existence in the complex plane $\mathbb{C}_J$ by:
\[
s_1^2 > (C_n^2 - 1)s_0^2.
\]
The boundary of this region is defined by the equation:
\[
s_1^2 - (C_n^2 - 1)s_0^2 = 0,
\]
which represents a pair of lines passing through the origin with slopes $\pm\sqrt{C_{n}^{2}-1}$. These lines partition the plane into four open wedges; the resolvent set consists of the two wedges
\[
\bigl\{|s_{1}|>\sqrt{C_{n}^{2}-1}\,|s_{0}|\bigr\}
\]
containing the $s_{1}$-axis (above and below). Strictly speaking, the resolvent set is therefore disconnected; the origin lies on its boundary and does not belong to it.
In particular, in this case there is no spectral gap.

{\em Case 3.}
The case where the minimum scalar curvature is negative, i.e., $k_{min} < 0$, requires a different restriction on the spectral parameter $s$. By setting $a^2 := -k_{min}$, the condition for the $S$-resolvent to exist is defined by the inequality:
\begin{equation}\label{CONWITHKNEG}
s_1^2-(C_n^2-1)s_0^2 > a^2.
\end{equation}
Since $C_n^2-1 > 0$, the hyperbola
$$
s_1^2-(C_n^2-1)s_0^2=a^2
$$
intersects the imaginary axis ($s_0=0$) at the points $s_1=\pm a$.
The hyperbola opens along the $s_{1}$-axis and consists of two branches: an upper one, $\{s_{1}\geq+\sqrt{a^{2}+(C_{n}^{2}-1)s_{0}^{2}}\}$, and a lower one, $\{s_{1}\leq -\sqrt{a^{2}+(C_{n}^{2}-1)s_{0}^{2}}\}$.  The resolvent set $\rho_{S}=\{(s_{0},s_{1})\colon s_{1}^{2}-(C_{n}^{2}-1)s_{0}^{2}>a^{2}\}$ is the union of the two open regions above the upper branch and below the lower branch; it is therefore disconnected and does not contain the origin (which lies in the $S$-spectrum, since the inequality fails at $(0,0)$).
Also in this case there is no spectral gap.

\begin{figure}[htbp]
\centering
\begin{subfigure}[b]{0.32\textwidth}
\centering
\begin{tikzpicture}[scale=0.765]
\fill[brown!25]
   (-3.1, 3.1) -- (3.1, 3.1) -- (3.1, 0)
   -- plot[domain=3.1:1, samples=80] (\x, {sqrt(\x*\x - 1)})
   -- (-1, 0)
   -- plot[domain=-1:-3.1, samples=80] (\x, {sqrt(\x*\x - 1)})
   -- (-3.1, 0) -- cycle;
\fill[brown!25]
   (-3.1, -3.1) -- (3.1, -3.1) -- (3.1, 0)
   -- plot[domain=3.1:1, samples=80] (\x, {-sqrt(\x*\x - 1)})
   -- (-1, 0)
   -- plot[domain=-1:-3.1, samples=80] (\x, {-sqrt(\x*\x - 1)})
   -- (-3.1, 0) -- cycle;
\draw[->, gray] (-3.2,0) -- (3.2,0) node[right] {\footnotesize $s_0$};
\draw[->, gray] (0,-3.2) -- (0,3.2) node[above] {\footnotesize $s_1$};
\draw[thick, blue, domain=-2.2:2.2, samples=80] plot ({sqrt(1+\x*\x)}, \x);
\draw[thick, blue, domain=-2.2:2.2, samples=80] plot ({-sqrt(1+\x*\x)}, \x);
\filldraw[red] (1,0) circle (1.5pt);
\filldraw[red] (-1,0) circle (1.5pt);
\node[below, font=\tiny] at (1,-0.1) {};
\node[below, font=\tiny] at (-1,-0.1) {};
\filldraw[black] (0,0) circle (1pt);
\node[below right, font=\scriptsize] at (0,0.05) {$0$};
\node[font=\scriptsize, blue!70!black] at (2.4,0) {$\sigma_S$};
\node[font=\scriptsize, blue!70!black] at (-2.4,0) {$\sigma_S$};
\node[font=\scriptsize, brown!70!black] at (0,2.7) {$\rho_S$};
\node[font=\scriptsize, brown!70!black] at (0,-2.7) {$\rho_S$};
\end{tikzpicture}
\caption{$k_{\min}>0$: spectral gap}
\label{fig:first}
\end{subfigure}
\hfill
\begin{subfigure}[b]{0.32\textwidth}
\centering
\begin{tikzpicture}[scale=0.765]
\fill[brown!25] (0,0) -- (-3.1,3.1) -- (3.1,3.1) -- cycle;
\fill[brown!25] (0,0) -- (-3.1,-3.1) -- (3.1,-3.1) -- cycle;
\draw[->, gray] (-3.2,0) -- (3.2,0) node[right] {\footnotesize $s_0$};
\draw[->, gray] (0,-3.2) -- (0,3.2) node[above] {\footnotesize $s_1$};
\draw[thick, blue] (-2.8,-2.8) -- (2.8,2.8);
\draw[thick, blue] (-2.8,2.8) -- (2.8,-2.8);
\filldraw[black] (0,0) circle (1.2pt);
\node[below right, font=\scriptsize] at (0,0.05) {$0$};
\node[font=\scriptsize, brown!70!black] at (0, 2.7) {$\rho_S$};
\node[font=\scriptsize, brown!70!black] at (0, -2.7) {$\rho_S$};
\node[font=\scriptsize, blue!70!black] at (2.4, 0.4) {$\sigma_S$};
\node[font=\scriptsize, blue!70!black] at (-2.4, 0.4) {$\sigma_S$};
\end{tikzpicture}
\caption{$k_{\min}=0$: no spectral gap}
\label{fig:second}
\end{subfigure}
\hfill
\begin{subfigure}[b]{0.32\textwidth}
\centering
\begin{tikzpicture}[scale=0.765]
\fill[brown!25]
  (-3.2, 3.2) -- (3.2, 3.2) --
  plot[domain=2.2:-2.2, samples=80] (\x, {sqrt(1+\x*\x)}) -- cycle;
\fill[brown!25]
  (-3.2, -3.2) -- (3.2, -3.2) --
  plot[domain=2.2:-2.2, samples=80] (\x, {-sqrt(1+\x*\x)}) -- cycle;
\draw[->, gray] (-3.2,0) -- (3.2,0) node[right] {\footnotesize $s_0$};
\draw[->, gray] (0,-3.2) -- (0,3.2) node[above] {\footnotesize $s_1$};
\draw[thick, blue, domain=-2.2:2.2, samples=80] plot (\x, {sqrt(1+\x*\x)});
\draw[thick, blue, domain=-2.2:2.2, samples=80] plot (\x, {-sqrt(1+\x*\x)});
\filldraw[red] (0,1) circle (1.5pt);
\filldraw[red] (0,-1) circle (1.5pt);
\node[right, font=\tiny] at (0.05,1) {$+a$};
\node[right, font=\tiny] at (0.05,-1) {$-a$};
\filldraw[black] (0,0) circle (1.2pt);
\node[below right, font=\scriptsize] at (0,0.05) {$0$};
\node[font=\scriptsize, blue!70!black] at (1.7, 0.3) {$\sigma_S$};
\node[font=\scriptsize, brown!70!black] at (0, 2.7) {$\rho_S$};
\node[font=\scriptsize, brown!70!black] at (0, -2.7) {$\rho_S$};
\end{tikzpicture}
\caption{$k_{\min}<0$: no spectral gap}
\label{fig:third}
\end{subfigure}
\caption{Part of the $S$-resolvent set $\rho_{S}$ (shaded brown) and of the $S$-spectrum $\sigma_{S}$ (unshaded), determined by $s_{1}^{2}-(C_{n}^{2}-1)s_{0}^{2}>-k_{\min}$ with $C_{n}>1$, hyperbolae and asymptotic lines are in blue, vertices (when present) in red.
A spectral gap is an open interval of the real axis $\{s_{1}=0\}$ contained entirely in $\rho_{S}$, separating the $S$-spectrum on that axis into two disjoint unbounded pieces.
Such a gap exists only when $k_{\min}>0$, figure~(A), the spectrum on the real axis is then $|s_{0}|\geq \sqrt{k_{\min}/(C_{n}^{2}-1)}$, the two rays being separated by $(-\sqrt{k_{\min}/(C_{n}^{2}-1)}, \sqrt{k_{\min}/(C_{n}^{2}-1)})$, of length $2\sqrt{k_{\min}/(C_{n}^{2}-1)}$. When $k_{\min}=0$, see figure (B), the gap collapses to the origin, when $k_{\min}<0$, see (C), the whole real axis is spectrum, so no gap exists. In Figure (A) the brown region is split only for visual analogy with (B) and (C), there $\rho_{S}$ is actually connected, the two pieces joined through the gap along the $s_{0}$-axis.
}
\label{fig:resolvent_sets}
\end{figure}

\section{The $S$-resolvent estimates using Poincar\'e Inequality}\label{POINCARESECT}

In this section we establish the $S$-resolvent
estimates under
homogeneous Dirichlet boundary conditions using $\norm{\nabla^\tau F}_2$ as a norm in $H^1_0$.
In this case we obtain the $S$-resolvent estimates that are subjected to bounds that involve the Poincar\'e constant.

\begin{theorem}[Poincar\'e]
Let $(M,g)$ be a compact connected spin manifold  with $\partial M\neq\emptyset$.
Then, there exists a positive constant $C_P(M)$ such that
\begin{equation}\label{PoincareDD}
\norm{F}_2 \leq C_P(M)\norm{\nabla^\tau F}_2,\ \ \ \ {\rm for\ every} \ \ F\in H^1_0(M,\mathcal{E}).
\end{equation}
\end{theorem}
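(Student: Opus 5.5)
Since $H_0^1(M,\mathcal{E})$ is the closure of $C_{cc}^\infty(M,\mathcal{E})$ in the $H^1$ norm and both sides of \eqref{PoincareDD} are continuous in that norm, it suffices to prove the inequality for $F\in C_{cc}^\infty(M,\mathcal{E})$ and pass to the limit. I will reduce the spinor inequality to the classical scalar Poincar\'e inequality on a compact Riemannian manifold with nonempty boundary, using Kato's inequality. For a smooth section $F$, metric compatibility of $\nabla^\tau$ with the pointwise scalar product \eqref{eq:scalar_preproduct} gives, wherever $F(x)\neq 0$,
$$
X\,\|F(x)\| = \frac{\Sc{\nabla^\tau_X F, F}(x)}{\|F(x)\|},
$$
so by Lemma \ref{lem_Properties} iv) we get $|X\|F\||\leq \|\nabla^\tau_X F\|$. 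Summing over a local orthonormal frame, as in \eqref{NN2}, this yields
$$
|d\|F\||\leq \|\nabla^\tau F\| \quad \text{almost everywhere.}
$$
To avoid the singular set $\{F=0\}$ I will use the regularization $u_\varepsilon:=\sqrt{\|F\|^2+\varepsilon^2}-\varepsilon$. This is a Lipschitz function, it vanishes near $\partial M$, it satisfies $|du_\varepsilon|\le \|\nabla^\tau F\|$ pointwise everywhere, and $u_\varepsilon\uparrow \|F\|$ as $\varepsilon\to 0$.

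Next I apply the scalar Poincar\'e inequality. On a compact connected Riemannian manifold with $\partial M\neq\emptyset$ there is $C_P(M)>0$ with $\|u\|_{L^2}\le C_P(M)\|du\|_{L^2}$ for all real $u\in H_0^1(M)$. This is a classical fact, and I would recall its standard proof. Suppose it fails. Then there are $u_k$ with $\|u_k\|_2=1$ and $\|du_k\|_2\to0$. By Rellich compactness of $H_0^1(M)\hookrightarrow L^2(M)$ on the compact manifold, a subsequence converges in $L^2$ to some $u$ with $du=0$ and $\|u\|_2=1$. So $u$ is constant by connectedness, and $u\in H_0^1$ forces that constant to be $0$, a contradiction. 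Applying this to $u_\varepsilon$ gives
$$
\|u_\varepsilon\|_2\le C_P(M)\,\|\nabla^\tau F\|_2,
$$
and letting $\varepsilon\to0$ by monotone convergence gives \eqref{PoincareDD} for smooth compactly supported $F$. Density then gives it for all $F\in H_0^1(M,\mathcal{E})$.

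\textbf{Main obstacle.} The delicate step is the Kato inequality, since it needs compatibility of $\nabla^\tau$ with the real fiber metric. The paper states this compatibility, but it must hold for the scalar-part pairing $\Sc{\cdot,\cdot}$. Granting it, the computation is pointwise and the regularization by $\varepsilon$ handles the zero set.

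An alternative that avoids Kato entirely is to run the compactness argument directly on sections. Rellich's theorem holds for $H_0^1(M,\mathcal{E})$ by local trivialization and a partition of unity. A limit with $\nabla^\tau F=0$ is a parallel section, so $\|F\|$ is constant, and vanishing at the boundary forces $F=0$. The final step uses $\partial M\neq\emptyset$ and connectedness of $M$.
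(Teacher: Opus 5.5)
Your argument is correct. The paper gives no proof of this statement: it quotes it as the classical Poincar\'e inequality and then uses it. There is therefore no competing argument to compare against, and your proposal supplies the justification the paper omits.

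Of your two routes, the Kato reduction is the more informative one. Since $|du_\varepsilon|\le\norm{\nabla^\tau F}$ pointwise, the spinor inequality inherits the scalar Dirichlet constant. So one may take $C_P(M)=\lambda_1(M)^{-1/2}$, where $\lambda_1(M)$ is the first Dirichlet eigenvalue of the scalar Laplace--Beltrami operator of $(M,g)$. This choice is independent of the spinor bundle and of the spin connection. It also partly answers the paper's later remark that $C_P(M)$ is ``in general not explicit'', because $\lambda_1(M)$ is accessible to classical lower bounds. The direct compactness argument on sections is purely existential and gives no such information.

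As for the obstacle you flag, compatibility of $\nabla^\tau$ with the pairing $\Sc{\overline{F}G}$ does hold. In a local trivialization $\nabla^\tau_{E_i}=E_i+\omega_i$, with $\omega_i$ left multiplication by a bivector, so $\overline{\omega_i}=-\omega_i$. Hence $\Sc{\overline{F}\omega_iF}=0$ and $E_i\bigl(\norm{F}^2\bigr)=2\,\Sc{\overline{F}\,\nabla^\tau_{E_i}F}$, which is exactly what your Kato step needs.
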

Using Poincar\'e Inequality, for the manifolds for which~\eqref{PoincareDD} holds on $H^1_0$, we consider the norm
$$
\|F\|_{H^1_0}:=\norm{\nabla^\tau F}_2.
$$
\begin{theorem}[Nonnegative curvature using Poincar\'e Inequality] \label{Th:Main_general_nonneg_POIN}
Let $(M,g)$ be a compact spin manifold $M$ with boundary $\partial M$ and $g$ be a Riemannian metric. Let $\kappa$ be the scalar curvature and let us assume that
\begin{equation}\label{kappabar2}
k_{min}:= \frac{1}{4}\min_{x\in M}\kappa(x) \geq 0 .
\end{equation}
Let $s\in\BR^{n+1}$, and assume that
\begin{align}\label{STIME_SU_ESSE_DIR_POIN}
1-2\modulo{s_0}C_n C_P(M)>0,
\end{align}
where $C_n$ is given by (\ref{ineqLemma}) and $C_P(M)$ is the Poincar\'e constant.
Then, for every $f\in L^2(M,\mathcal{E})$ there exists a unique $F_f\in H_0^1(M,\mathcal{E})$ such that
\begin{equation*}
q_s(F_f,G) = \inner{f,G}_{2}, \qquad \text{for all } G\in H_0^1(M,\mathcal{E}).
\end{equation*}
Moreover, for $s\in \mathbb{R}^{n+1}$ that satisfies estimate (\ref{STIME_SU_ESSE_DIR_POIN}), we have
\begin{equation}\label{quatSresorig_DIR_EST_POIN}
\|S_R^{-1}(s,D)f\|_2 \leq  C_P(M)\frac{C_n+C_P(M)|s| }{1-2\modulo{s_0}C_n C_P(M)}\norm{f}_2.
\end{equation}
\end{theorem}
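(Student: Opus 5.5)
We argue exactly as in the proof of Theorem \ref{Th:Main_general}, applying the Lax--Milgram lemma to $q_s$ on $H_0^1(M,\mathcal E)$. The difference is that $H_0^1$ now carries the norm $\|F\|_{H_0^1}=\|\nabla^\tau F\|_2$, and every $L^2$-norm is converted into a gradient norm through the Poincar\'e inequality \eqref{PoincareDD}.

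\emph{Continuity and boundedness of the functional.} Continuity of $q_s$ comes from the same termwise bound as before. Using $\frac14|\kappa|\le k_m$, Lemma \ref{LemmaDF64} and $\|F\|_2\le C_P(M)\|\nabla^\tau F\|_2$, we get
\[
|q_s(F,G)|\le \bigl(1+k_mC_P^2+2|s_0|C_nC_P+|s|^2C_P^2\bigr)\|\nabla^\tau F\|_2\|\nabla^\tau G\|_2 .
\]
For fixed $f\in L^2$, the functional $\varphi_f(G)=\inner{f,G}_2$ is bounded by Remark \ref{LINFUNCCONST} together with Poincar\'e:
\[
|\varphi_f(G)|\le \|f\|_2\,C_P(M)\,\|\nabla^\tau G\|_2 .
\]

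\emph{Coercivity.} Since $k_{min}\ge0$, we may simply drop the curvature term and also the nonnegative term $|s|^2\|F\|_2^2$. Lemma \ref{LemmaDF64} with $G=F$ followed by Poincar\'e on $\|F\|_2$ gives
\[
q_s(F,F)\ge \|\nabla^\tau F\|_2^2-2|s_0|C_n\|\nabla^\tau F\|_2\|F\|_2\ge \bigl(1-2|s_0|C_nC_P(M)\bigr)\|\nabla^\tau F\|_2^2 .
\]
Under \eqref{STIME_SU_ESSE_DIR_POIN} this is coercive. No Young inequality or optimisation in $\delta$ is needed, and this is why the constant is so clean. Lax--Milgram then gives existence and uniqueness of $F_f=Q_s(D)^{-1}f$.

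\emph{Estimates.} Testing with $G=F_f$ and setting $\beta:=1-2|s_0|C_nC_P(M)$, we obtain
\[
\beta\|\nabla^\tau F_f\|_2^2\le q_s(F_f,F_f)=\inner{f,F_f}_2\le \|f\|_2\|F_f\|_2\le C_P\|f\|_2\|\nabla^\tau F_f\|_2 .
\]
This yields $\|\nabla^\tau F_f\|_2\le C_P\|f\|_2/\beta$, and by Poincar\'e once more $\|F_f\|_2\le C_P^2\|f\|_2/\beta$. Finally we write $S_R^{-1}(s,D)f=-(D-\overline{s})Q_s(D)^{-1}f$ as in \eqref{quatSresorig_DIR}. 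We use $\|DF_f\|_2\le C_n\|\nabla^\tau F_f\|_2$ from the proof of Lemma \ref{LemmaDF64}, and $\|\overline{s}F_f\|_2=|s|\|F_f\|_2$ from Lemma \ref{lem_Properties} ii) applied pointwise. Together these give
\[
\|S_R^{-1}(s,D)f\|_2\le \Bigl(\frac{C_nC_P}{\beta}+\frac{|s|C_P^2}{\beta}\Bigr)\|f\|_2=C_P\frac{C_n+C_P|s|}{\beta}\|f\|_2 ,
\]
which is \eqref{quatSresorig_DIR_EST_POIN}.

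\emph{Where the care lies.} There is no real obstacle. The only subtle point is dropping the terms $k_{min}\|F\|_2^2$ and $|s|^2\|F\|_2^2$: this requires $k_{min}\ge0$, which is exactly the hypothesis \eqref{kappabar2}. One should also note that the Poincar\'e inequality needs $\partial M\ne\emptyset$ and, componentwise, connectedness, which is what is assumed for \eqref{PoincareDD}.
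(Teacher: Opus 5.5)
Your proposal is correct and follows the paper's proof essentially step by step. It obtains coercivity in the norm $\|\nabla^\tau\cdot\|_2$ by discarding $(k_{\min}+|s|^2)\|F\|_2^2\geq 0$ and applying Poincar\'e to the cross term, then uses Lax--Milgram, tests with $G=F_f$, and converts the gradient bound into the $L^2$ and $S$-resolvent bounds via Poincar\'e and Lemma~\ref{LemmaDF64}. The only difference is that you write out explicitly the continuity constant and the boundedness of $\varphi_f$ in the gradient norm, which the paper dismisses as standard.
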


\begin{proof}
We just consider the coercivity condition; the continuity condition is standard.

We observe that (assuming $F=G$ in the inner product of $q_s(F,G)$), and recalling from the previous proof that $q_{s}(F,F)\in\mathbb{R}$ since $\kappa$ is real-valued and the Dirac operator $D$ is symmetric:
\begin{align} \label{Eq:Coer_General_POINCARE}
    q_s(F,F) &= \inner{\nabla^\tau F,\nabla^\tau F}_2 + \frac{1}{4} \inner{\kappa F,F}_2 - 2s_0\inner{DF ,F}_2 + |s|^2\norm{F}_2^2 \nonumber \\
    &\geq \norm{\nabla^\tau F}_2^2 +k_{\min}\norm{F}_2^2 - 2|s_0|C_n\, \norm{\nabla^\tau F}_2\norm{F}_2 + |s|^2\norm{F}_2^2.
\end{align}
We apply the Poincar\'e inequality~\eqref{PoincareDD} to the term $2|s_0|C_n\, \norm{\nabla^\tau F}_2\norm{F}_2$:
\[
   2|s_{0}|\,C_{n}\,\|\nabla^{\tau}F\|_{2}\,\|F\|_{2} \;\leq\; 2|s_{0}|\,C_{n}\,\|\nabla^{\tau}F\|_{2}\cdot C_{P}(M)\,\|\nabla^{\tau}F\|_{2} \;=\; 2|s_{0}|\,C_{n}\,C_{P}(M)\,\|\nabla^{\tau}F\|_{2}^{2}.
\]
Inserting this bound into~\eqref{Eq:Coer_General_POINCARE},
\begin{align*}
    q_s(F,F) &\geq \norm{\nabla^\tau F}_2^2 \p{1 - 2|s_0|C_n C_P(M)} + \norm{F}_2^2 \p{k_{\min} + |s|^2} \\
    &\geq \norm{\nabla^\tau F}_2^2 \p{1 - 2|s_0|C_n C_P(M)},
\end{align*}
where the last inequality drops the non-negative term $(k_{\min}+|s|^{2})\|F\|_{2}^{2}\geq 0$ (using $k_{\min}\geq 0$ from~\eqref{kappabar2}). This makes the resulting bound non-sharp, as noted in Remark~\ref{rem:non_optimal} below.

We require the coefficient of $\norm{\nabla^\tau F}_2^2$ to be strictly positive:
\[
1-2|s_0|C_n C_P(M)>0.
\]
Under this condition, $q_s$ is coercive on $H_{0}^{1}(M,\mathcal{E})$ endowed with the gradient norm $\|F\|_{H_{0}^{1}}=\|\nabla^{\tau}F\|_{2}$ (which is equivalent to the full $H^{1}$ norm by~\eqref{PoincareDD}):
\begin{equation}\label{POINCAREGGG}
    q_s(F,F) \geq \Big(1- 2|s_0|C_n C_P(M) \Big)\norm{\nabla^\tau F}_2^2.
\end{equation}
So we can derive estimates for the solution $F_f$.
The Lax--Milgram lemma implies the existence of a unique solution $F_f$. Testing the defining equation $q_s(F_f,G) = \inner{f,G}_2$ with $G=F_f$:
\begin{equation}\label{ESTIM_PSEUDO_RESOL_DR_POIN}
    \p{1 - 2|s_0|C_n C_P(M)}\norm{\nabla^\tau F_f}_2^2 \leq q_s(F_f,F_f) = \inner{f,F_f}_{2} \leq |\inner{f,F_f}_{2}|.
\end{equation}
Here the equality is the defining relation~\eqref{Eq:test_funct_general_nonneg} of~$F_{f}$ tested with $G=F_{f}$; since $q_{s}(F_{f},F_{f})$ is real and non-negative (by~\eqref{POINCAREGGG} under~\eqref{STIME_SU_ESSE_DIR_POIN}), the equation $q_{s}(F_{f},F_{f})=\inner{f,F_{f}}_{2}$ forces $\inner{f,F_{f}}_{2}$ to be real and non-negative as well, hence $\inner{f,F_{f}}_{2}=|\inner{f,F_{f}}_{2}|. $

Applying the Cauchy--Schwarz inequality, $|\inner{f,F_{f}}_{2}|\leq\|f\|_{2}\|F_{f}\|_{2}$, and then the Poincar\'e inequality
\[
\norm{F_f}_2 \leq C_P(M)\norm{\nabla^\tau F_f}_2,
\]
we have
\begin{equation}\label{ESTIM_PSEUDO_RESOL_DR_POIN_BIS}
    \p{1 - 2|s_0|C_n C_P(M)}\norm{\nabla^\tau F_f}_2^2 \leq \norm{f}_2\norm{F_f}_2 \leq \norm{f}_2 C_P(M)\norm{\nabla^\tau F_f}_2.
\end{equation}
Dividing by $\norm{\nabla^\tau F_f}_2$ (assuming $F_f \neq 0$, since the case $F_f=0$ is trivial) gives the gradient estimate:
\begin{equation}
    \norm{\nabla^\tau F_f}_2 \leq \frac{C_P(M)}{1 - 2|s_0|C_n C_P(M)}\norm{f}_2.
\end{equation}
Using the Poincar\'e inequality again, we get the $L^2$-estimate:
\begin{equation}\label{Poincare_BIS}
    \norm{F_f}_2 \leq C_P(M)\norm{\nabla^\tau F_f}_2 \leq \frac{C_P(M)^2}{1 - 2|s_0|C_n C_P(M)}\norm{f}_2.
\end{equation}

Finally we get the $S$-resolvent estimate.
Recalling that $F_f = Q_s(D)^{-1}f$ and the $S$-resolvent is defined as $S_R^{-1}(s,D)f = -(D-\overline{s}\mathcal{I})F_f$, we take the norm:
\begin{align*}
\|S_R^{-1}(s,D)f\|_2 &\leq \|DF_f\|_2 + |\overline{s}|\norm{F_f}_2 \\
&\leq C_n \norm{\nabla^\tau F_f}_2 + |s|\norm{F_f}_2 \quad \text{(using } \norm{DF}_2 \le C_n \norm{\nabla^\tau F}_2).
\end{align*}
Substituting the derived bounds for $\norm{\nabla^\tau F_f}_2$ and $\norm{F_f}_2$:
\begin{align*}
\|S_R^{-1}(s,D)f\|_2 &\leq C_n \p{\frac{C_P(M)}{1 - 2|s_0|C_n C_P(M)}\norm{f}_2} + |s|\p{\frac{C_P(M)^2}{1 - 2|s_0|C_n C_P(M)}\norm{f}_2} \\
&= \frac{C_P(M)(C_n + C_P(M)|s|)}{1 - 2|s_0|C_n C_P(M)}\norm{f}_2.
\end{align*}
\end{proof}

\begin{remark}\label{rem:non_optimal}
Observe that the spectral estimate from~\eqref{STIME_SU_ESSE_DIR_POIN}
\begin{align}
|s_0| < \frac{1}{2C_n C_P(M)}
\end{align}
depends on the Poincar\'e constant $C_P(M)$ that in general is not explicit, moreover the estimates of the $S$-resolvent operator are not optimal as well  the non-optimality originates from the step in the proof of
Theorem~\ref{Th:Main_general_nonneg_POIN} where the term $(k_{\min}+|s|^{2})\|F\|_{2}^{2}\geq 0$ was discarded, so the resulting bound makes no use of the contributions of $k_{\min}$ or $|s|^{2}$.
\end{remark}

\begin{theorem}[Negative scalar curvature using Poincar\'e inequality] \label{Th:Main_general_neg_POIN}
Let $(M,g)$ be a compact spin manifold $M$ with boundary $\partial M$ and a Riemannian metric $g$ with scalar curvature $\kappa$ and let us assume that
\begin{equation}\label{kappabarNEGAT}
k_{min}:= \frac{1}{4}\min_{x\in M}\kappa(x) <0 .
\end{equation}
Let $s\in\BR^{n+1}$, and assume that
\begin{align}\label{STIME_SU_ESSE_DIR_POINDDD}
1+k_{min}C_{P}^{2}(M) -2\modulo{s_0}C_n C_P(M)>0
\end{align}
where $C_n$ is given by (\ref{ineqLemma}) and $C_P(M)$ is the Poincar\'e constant.
Then, for every $f\in L^2(M,\mathcal{E})$ there exists a unique $F_f\in H_0^1(M,\mathcal{E})$ such that
\begin{equation*}
   q_s(F_f,G) = \inner{f,G}_{2}, \qquad \text{for all } G\in H_0^1(M,\mathcal{E}).
\end{equation*}
Moreover, for $s\in \mathbb{R}^{n+1}$ that satisfies estimate we have
\begin{equation}\label{quatSresorig_DIR_EST_POIN2}
\|S_R^{-1}(s,D)f\|_2\leq C_{P}(M)\frac{C_n+C_{P}(M)|s| }{1+k_{min}C_P^2(M)-2\modulo{s_0}C_n C_P(M)}\norm{f}_2.
\end{equation}
\end{theorem}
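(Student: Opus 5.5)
The argument follows the proof of Theorem \ref{Th:Main_general_nonneg_POIN}. The only change is that the curvature term can no longer be discarded, since $k_{\min}<0$. Instead it is absorbed into the gradient term through the Poincar\'e inequality \eqref{PoincareDD}.

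\emph{Continuity and coercivity.} On $H_0^1(M,\mathcal{E})$ we use the gradient norm $\norm{\nabla^\tau F}_2$. Continuity of $q_s$ follows exactly as in the proof of Theorem \ref{Th:Main_general}, now also using $\norm{F}_2\le C_P(M)\norm{\nabla^\tau F}_2$. For coercivity, start from \eqref{Eq:Coer_General_POINCARE}:
\[
q_s(F,F)\ge \norm{\nabla^\tau F}_2^2 + (k_{\min}+|s|^2)\norm{F}_2^2 - 2|s_0|C_n\norm{\nabla^\tau F}_2\norm{F}_2 .
\]
Drop $|s|^2\norm{F}_2^2\ge 0$. Since $k_{\min}<0$, Poincar\'e gives $k_{\min}\norm{F}_2^2\ge k_{\min}C_P(M)^2\norm{\nabla^\tau F}_2^2$. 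For the cross term, use $\norm{F}_2\le C_P(M)\norm{\nabla^\tau F}_2$ as before. Together these give
\[
q_s(F,F)\ge \bigl(1+k_{\min}C_P(M)^2-2|s_0|C_nC_P(M)\bigr)\norm{\nabla^\tau F}_2^2 =: c(s)\,\norm{\nabla^\tau F}_2^2 .
\]
Hypothesis \eqref{STIME_SU_ESSE_DIR_POINDDD} says exactly that $c(s)>0$. The right-hand side $G\mapsto\inner{f,G}_2$ is bounded by $\norm{f}_2 C_P(M)\norm{\nabla^\tau G}_2$. The Lax--Milgram lemma then gives a unique $F_f\in H_0^1(M,\mathcal{E})$.

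\emph{Estimates.} Test with $G=F_f$. As in \eqref{ESTIM_PSEUDO_RESOL_DR_POIN}, $q_s(F_f,F_f)=\inner{f,F_f}_2$ is real and nonnegative. Cauchy--Schwarz and Poincar\'e give
\[
c(s)\norm{\nabla^\tau F_f}_2^2\le \norm{f}_2\norm{F_f}_2\le C_P(M)\norm{f}_2\norm{\nabla^\tau F_f}_2 .
\]
Hence $\norm{\nabla^\tau F_f}_2\le C_P(M)\norm{f}_2/c(s)$, and one more application of Poincar\'e gives $\norm{F_f}_2\le C_P(M)^2\norm{f}_2/c(s)$. Now write $S_R^{-1}(s,D)f=-(D-\overline{s}\mathcal{I})F_f$ and bound $\norm{DF_f}_2\le C_n\norm{\nabla^\tau F_f}_2$ using Lemma \ref{LemmaDF64}. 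Combining,
\[
\|S_R^{-1}(s,D)f\|_2\le C_n\norm{\nabla^\tau F_f}_2+|s|\norm{F_f}_2\le \frac{C_P(M)\bigl(C_n+C_P(M)|s|\bigr)}{c(s)}\norm{f}_2 ,
\]
which is \eqref{quatSresorig_DIR_EST_POIN2}.

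\emph{Main obstacle.} The only real point is the sign handling in the curvature term. The pointwise bound $\kappa\ge 4k_{\min}$ gives $\tfrac14\inner{\kappa F,F}_2\ge k_{\min}\norm{F}_2^2$. Because $k_{\min}<0$, this inequality reverses direction under multiplication by a negative number, so the Poincar\'e inequality must be applied in the form $-\norm{F}_2^2\ge -C_P(M)^2\norm{\nabla^\tau F}_2^2$. One must also check that discarding $|s|^2\norm{F}_2^2$ is harmless; it is, since that term is nonnegative.
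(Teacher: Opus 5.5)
Your proposal is correct and matches the paper's proof essentially step for step. The paper likewise drops $|s|^2\norm{F}_2^2$ and applies Poincar\'e to the cross term and, with the sign reversed since $k_{\min}<0$, to $k_{\min}\norm{F}_2^2$, obtaining coercivity with constant $1+k_{\min}C_P^2(M)-2|s_0|C_nC_P(M)$; it then uses Lax--Milgram, tests with $G=F_f$, and combines the gradient and $L^2$ bounds via Lemma~\ref{LemmaDF64}, while you additionally spell out continuity and the boundedness of $G\mapsto\inner{f,G}_2$ in the gradient norm, which the paper leaves as standard.
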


\begin{proof}
We just consider the coercivity condition; the continuity condition is standard.
We observe that (assuming $F=G$ in the inner product of $q_s(F,G)$), and recalling that $q_{s}(F,F)\in\mathbb{R}$ since $\kappa$ is real-valued and the Dirac operator $D$ is symmetric:
\begin{align} \label{Eq:Coer_General_POINCARE2}
    q_s(F,F) &= \inner{\nabla^\tau F,\nabla^\tau F}_2 + \tfrac{1}{4}\inner{\kappa F,F}_2 - 2s_0\inner{DF ,F}_2 + |s|^2\norm{F}_2^2 \nonumber \\
    &\geq \norm{\nabla^\tau F}_2^2  + k_{\min} \norm{F}_2^2 - 2|s_0|C_n\, \norm{\nabla^\tau F}_2\norm{F}_2 + |s|^2\norm{F}_2^2.
\end{align}
We apply the Poincar\'e inequality to the term $2|s_0|C_n\, \norm{\nabla^\tau F}_2\norm{F}_2$ and use the fact that $|s|^2\norm{F}_2^2\geq 0$ in addition, since $k_{\min}<0$ by hypothesis~\eqref{kappabarNEGAT}, the term $k_{\min}\|F\|_{2}^{2}$ is itself non-positive, and we bound it from below by the Poincar\'e inequality $\|F\|_{2}^{2}\leq C_{P}^{2}(M)\|\nabla^{\tau}F\|_{2}^{2}$ with the inequality reversed due to the negative sign:
\[
   k_{\min}\,\|F\|_{2}^{2} \;\geq\; k_{\min}\,C_{P}^{2}(M)\,\|\nabla^{\tau}F\|_{2}^{2}\qquad(\text{since }k_{\min}<0).
\]
Combining these three steps,
\begin{align*}
    q_s(F,F) &\geq \norm{\nabla^\tau F}_2^2 \p{1 + k_{\min}C_P^2(M) - 2|s_0|C_n C_P(M)}.
\end{align*}
We require the coefficient of $\norm{\nabla^\tau F}_2^2$ to be strictly positive:
\[
1+k_{\min}C_P^2(M)-2|s_0|C_n C_P(M)>0.
\]
Under this condition, $q_s$ is coercive on $H_{0}^{1}(M,\mathcal{E})$ endowed with the gradient norm $\|\cdot\|_{H_{0}^{1}}=\|\nabla^{\tau}\,\cdot\,\|_{2}$, which is equivalent to the full $H^{1}$-norm by Poincar\'e:
\begin{equation}\label{POINCAREDDDDC2}
    q_s(F,F) \geq \Big(1+k_{\min}C_P^2(M) -2|s_0|C_n C_P(M) \Big)\norm{\nabla^\tau F}_2^2.
\end{equation}
Reasoning as before we get estimate (\ref{quatSresorig_DIR_EST_POIN2}).
Explicitly: testing the defining equation $q_{s}(F_{f},G)=\inner{f,G}_{2}$ with $G=F_{f}$ and using~\eqref{POINCAREDDDDC2} together with $q_{s}(F_{f},F_{f})\geq 0$ being real,
\begin{equation}\label{Eq:Chain_Inequality_Poincare}
\begin{split}
\bigl(1+k_{\min}C_{P}^{2}(M)-2|s_{0}|C_{n}C_{P}(M)\bigr)\,\|\nabla^{\tau}F_{f}\|_{2}^{2} &\leq q_{s}(F_{f},F_{f}) \\
&\leq \|f\|_{2}\,\|F_{f}\|_{2} \\
&\leq C_{P}(M)\,\|f\|_{2}\,\|\nabla^{\tau}F_{f}\|_{2},
\end{split}
\end{equation}
where the last step uses Poincar\'e. Dividing by $\|\nabla^{\tau}F_{f}\|_{2}$ (the case $F_{f}=0$ being trivial) gives
\[
   \|\nabla^{\tau}F_{f}\|_{2} \;\leq\; \frac{C_{P}(M)\,\|f\|_{2}}{1+k_{\min}C_{P}^{2}(M)-2|s_{0}|C_{n}C_{P}(M)},
\]
and Poincar\'e once more gives
\[
   \|F_{f}\|_{2} \;\leq\; C_{P}(M)\,\|\nabla^{\tau}F_{f}\|_{2} \;\leq\; \frac{C_{P}^{2}(M)\,\|f\|_{2}}{1+k_{\min}C_{P}^{2}(M)-2|s_{0}|C_{n}C_{P}(M)}.
\]
The $S$-resolvent estimate then follows from:
\begin{align*}
   \|S_{R}^{-1}(s,D)f\|_{2} &\leq \|DF_{f}\|_{2}+|s|\,\|F_{f}\|_{2} \;\leq\; C_{n}\,\|\nabla^{\tau}F_{f}\|_{2}+|s|\,\|F_{f}\|_{2} \\
   &\leq\frac{C_{P}(M)(C_{n}+C_{P}(M)|s|)}{1+k_{\min}C_{P}^{2}(M)-2|s_{0}|C_{n}C_{P}(M)}\,\|f\|_{2},
\end{align*}
which is precisely~\eqref{quatSresorig_DIR_EST_POIN2}.
\end{proof}

\begin{remark}
It should be noted that points $s$ that satisfy the condition~\eqref{STIME_SU_ESSE_DIR_POINDDD} can form an empty set. Indeed, it will not be empty if
\[
1+k_{\min}C_P^2(M)>0 \iff \min_{x\in M} \kappa(x) > -4/C_P^2(M).
\]

If this condition holds, then we have that the resolvent set contains the vertical strip of points $s$ such that
\[
|s_0| < \frac{1+k_{\min}C_P^2(M)}{2C_n C_P(M)},
\]
which depends only on the real part $s_{0}$ and is independent of $s_{1}$, giving rise to a spectral gap.
\end{remark}


\section{The $S$-resolvent estimates with Robin-like boundary conditions}\label{ROBINSECTION}

We will study the elliptic boundary value problem with homogeneous Robin-like boundary condition
\begin{equation} \label{EQ:BVP_ROBIN}
		\begin{cases}
			Q_s(D)F = f, \quad \text{in } M, \\
		\nabla_N^\tau F\vert_{\partial M}+bF\vert_{\partial M} = 0,
		\end{cases}
\end{equation}
where $Q_s(D) = D^2 - 2s_0 D + \modulo{s}^2$, while $N$ is the unit normal vector field to the boundary and $b$ is a given bounded real valued function. Our aim is to study the invertibility of the operator $Q_s(D)$ in (\ref{EQ:BVP_ROBIN}) in the weak sense.  That is, we ask whether the problem (\ref{EQ:BVP_ROBIN}) admits a unique weak solution for every $f\in L^2(M,\mathcal{E})$.

\medskip
In order to derive the weak formulation, we have to integrate by parts the second order part of $Q_s(D)$, which gives us a Green's-like formula for $-\Delta_\tau$. For $F,G\in H^1(M,\mathcal{E})$ and for every $s\in\BR^{n+1}$, we consider the following bilinear form associated with the boundary value problem (\ref{EQ:BVP_ROBIN})
\begin{align} \label{EQ:Form_general_ROBIN}
   q_s^R(F,G) & = \int_M \Sc{\overline{\nabla^\tau F} \nabla^\tau G} \: \dif\text{vol}
- \int_{\partial M} \Sc{\overline{\nabla_N^\tau F} G} \: \widetilde{\dvol}
+ \frac{1}{4}\int_{M} \kappa(x)\Sc{\overline{F}G} \: \dif\text{vol} \nonumber \\
& \hspace*{4cm} - 2s_0\int_{M} \Sc{\overline{DF}G} \: \dif\text{vol} + \modulo{s}^2\int_{M} \Sc{\overline{F}G} \: \dif\text{vol}
\end{align}
obtained by applying Theorem \ref{Th:Green_like_formula} to the second order part of $Q_s(D)$ in \eqref{EQ:BVP_ROBIN}, i.e.\ to $\Delta_\tau$.
Keeping in mind Problem \ref{PROB_DIR_FULL_NORM_DIR} the form $q_s^R(F,G)$ in (\ref{EQ:Form_general_ROBIN}) can be written as
\begin{equation}\label{qROBINONE}
q_s^R(F,G)=q_s(F,G)- \int_{\partial M} \Sc{\overline{\nabla_N^\tau F} G} \: \widetilde{\dvol},
\end{equation}
where $q_s(F,G)$ is given by (\ref{EQ:Form_gen_inner}). Now we take into account the Robin-like boundary conditions and for simplicity we set
$$
\inner{b{\rm Tr} (F),{\rm Tr}(G)}_{2, \partial M}:=\int_{\partial M} \Sc{\overline{b F} G} \: \widetilde{\dvol}
$$
where ${\rm Tr}\,F$ stands for the boundary trace of the function $F$ and
$$
-\int_{\partial M} \Sc{\overline{b F} G} \: \widetilde{\dvol}=\int_{\partial M} \Sc{\overline{\nabla_N^\tau F} G},
$$
so we get
\begin{equation}\label{qROBIN}
q_s^R(F,G)=q_s(F,G)-\inner{{\rm Tr} (bF),{\rm Tr}(G)}_{2, \partial M}.
\end{equation}

\begin{problem}\label{PROB_DIR_FULL_NORM_ROBIN}
Let $D$ be the Dirac operator in $\eqref{Eq:General_Dirac}$ and $\kappa$ be the scalar curvature. Let $q_s^R(F,G)$ be the form defined in (\ref{qROBIN}) and assume that
$$
\dom (q_s^R) := H^1(M,\mathcal{E})\times H^1(M,\mathcal{E}).
$$
Show that for some values of the spectral parameter $s\in\BR^{n+1}$, for every $f\in L^2(M,\mathcal{E})$ there exists a unique solution $F_f\in H^1(M,\mathcal{E})$ such that
\begin{equation}\label{WEAK_EQUATION_DIRIC}
   q_s^R(F_f,G) = \inner{f,G}_2, \quad \text{for all } G\in H^1(M,\mathcal{E}).
\end{equation}
Furthermore, determine $L^2$- and $\nabla^\tau$-estimates of $F_f$, depending on the parameter $s\in\BR^{n+1}$.
\end{problem}

\begin{remark}
In the following result we consider the positive part of $b$, that is $\max\{0,b\}$, because the destabilizing contribution of the Robin term $-\inner{b\,\mathrm{Tr}F,\mathrm{Tr}F}_{2,\partial M}$ to the coercivity arises from the positive part of $b$, since:
\begin{equation}
\int_{\partial M}b|\mathrm{Tr}F|^{2} \leq \int_{\partial M}b_{+}|\mathrm{Tr}F|^{2} \leq \|b_{+}\|_{L^{\infty}}\|\mathrm{Tr}F\|_{2}^{2}.
\end{equation}
\end{remark}

\begin{theorem}\label{Th:Main_general_ROBIN}
Let $(M,g)$ be a compact spin manifold $M$ with boundary $\partial M$ and a Riemannian metric $g$ with scalar curvature $\kappa(x)$ and define:
\begin{equation}\label{kappabar_robin}
k_{\min}:= \frac{1}{4}\min_{x\in M}\kappa(x), \qquad \text{and} \qquad b_{+}(x) = \max\{0,b(x)\}\ \ \text{for}\ \ b \in L^\infty(\partial M).
\end{equation}
Assume that, for $s\in\BR^{n+1}$, we have:
\begin{equation}\label{STIME_SU_ESSE_ROBIN_B_POSIT}
1- \sigma\|b_{+}\|_{L^\infty(\partial M)}>0 , \qquad \text{and} \qquad
(|s|^2 +k_{\min}- \sigma\|b_{+}\|_{L^\infty(\partial M)}) - \frac{|s_0|^2 C_n^2}{1- \sigma\|b_{+}\|_{L^\infty(\partial M)}} >0,
\end{equation}
where $\sigma$ is the constant in (\ref{ESTIM_bound_TERMTAU}). Setting, for brevity,
\begin{equation}\label{alefbet}
   \alpha\;:=\; 1-\sigma\|b_{+}\|_{L^{\infty}(\partial M)}, \qquad \beta(s) \;:=\; |s|^{2}+k_{\min}-\sigma\|b_{+}\|_{L^{\infty}(\partial M)},
\end{equation}
then, for every $f\in L^2(M,\mathcal{E})$ there exists a unique $F_f\in H^1(M,\mathcal{E})$ that satisfies equation (\ref{WEAK_EQUATION_DIRIC}) and we have:
\begin{equation}\label{quatSresorig_ROBIN_A_POSIT}
   \|S_R^{-1}(s,D)f\|_2 \;\leq\;
   \frac{C_{n}\,\sqrt{\beta(s)}\;+\;|s|\,\sqrt{\alpha}}
        {\beta(s)\,\sqrt{\alpha}\;-\;C_{n}|s_{0}|\,\sqrt{\beta(s)}}
   \;\|f\|_2 .
\end{equation}
\end{theorem}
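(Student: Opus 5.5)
The plan is to apply the Lax--Milgram lemma to $q_s^R$ on $H^1(M,\mathcal{E})$ with the full norm \eqref{normHone}, following the proof of Theorem \ref{Th:Main_general}. I take the constant $\sigma$ in (\ref{ESTIM_bound_TERMTAU}) to be the trace constant in
\begin{equation*}
\norm{{\rm Tr}\,F}_{2,\partial M}^2\leq \sigma\bigl(\norm{\nabla^\tau F}_2^2+\norm{F}_2^2\bigr),\qquad F\in H^1(M,\mathcal{E}).
\end{equation*}
This reading is an assumption, but it is the one that makes $\alpha$ and $\beta(s)$ in \eqref{alefbet} appear naturally.

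\textbf{Continuity.} By \eqref{qROBIN}, $q_s^R=q_s-\inner{{\rm Tr}(bF),{\rm Tr}(G)}_{2,\partial M}$. The bulk part $q_s$ is bounded on $H^1$ exactly as in Theorem \ref{Th:Main_general}, with the same estimates but now on $H^1$ instead of $H_0^1$. The boundary term is controlled by $\norm{b}_{L^\infty}\norm{{\rm Tr}F}\norm{{\rm Tr}G}\leq \sigma\norm{b}_{L^\infty}\norm{F}_{H^1}\norm{G}_{H^1}$. The right-hand functional $G\mapsto\inner{f,G}_2$ is bounded as in Remark \ref{LINFUNCCONST}.

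\textbf{Coercivity.} Write $X=\norm{\nabla^\tau F}_2$ and $Y=\norm{F}_2$. Lemma \ref{LemmaDF64} and $\kappa\geq 4k_{\min}$ give the bulk bound. The preceding remark gives $\int_{\partial M}b|{\rm Tr}F|^2\leq \norm{b_+}_{L^\infty}\norm{{\rm Tr}F}^2$, and the trace inequality then bounds the boundary term. Together these yield
\begin{equation*}
q_s^R(F,F)\geq \alpha X^2+\beta(s)Y^2-2|s_0|C_nXY .
\end{equation*}
I split the cross term with Young's inequality $2XY\leq \varepsilon X^2+\varepsilon^{-1}Y^2$ using the specific choice $\varepsilon=\sqrt{\alpha/\beta(s)}$. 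This choice is the balancing one and plays the role of $\delta^*$ in Theorem \ref{Th:Main_general}. With $c:=|s_0|C_n$ and $\gamma:=\sqrt{\alpha\beta(s)}-c$, it gives
\begin{equation*}
q_s^R(F,F)\geq A X^2+BY^2,\qquad A=\sqrt{\alpha/\beta}\;\gamma,\quad B=\sqrt{\beta/\alpha}\;\gamma .
\end{equation*}
Hypothesis \eqref{STIME_SU_ESSE_ROBIN_B_POSIT} says $\alpha>0$ and $\alpha\beta>c^2$, hence $\beta>0$ and $\gamma>0$. So $A,B>0$, the form is coercive, and Lax--Milgram gives a unique $F_f\in H^1(M,\mathcal{E})$ solving the weak problem.

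\textbf{Estimates.} Test with $G=F_f$. Then $q_s^R(F_f,F_f)=\inner{f,F_f}_2$ is real and nonnegative, so
\begin{equation*}
AX^2+BY^2\leq \norm{f}_2\,Y .
\end{equation*}
This gives $Y\leq \norm{f}_2/B$ and then $X\leq \norm{f}_2/\sqrt{AB}=\norm{f}_2/\gamma$. Finally
\begin{equation*}
\norm{S_R^{-1}(s,D)f}_2\leq C_nX+|s|\,Y\leq \Bigl(\frac{C_n}{\gamma}+\frac{|s|\sqrt{\alpha}}{\sqrt{\beta}\,\gamma}\Bigr)\norm{f}_2 .
\end{equation*}
Since $\sqrt{\beta}\,\gamma=\beta\sqrt{\alpha}-C_n|s_0|\sqrt{\beta}$, this is exactly \eqref{quatSresorig_ROBIN_A_POSIT}.

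\textbf{Main obstacle.} The main obstacle is the boundary term: it requires the trace inequality with the precise constant $\sigma$ in the form used above, together with the sign handling through $b_+$. Once that reduction to the quadratic form in $(X,Y)$ is in place, everything else is algebraic. The only real choice is $\varepsilon$, which must be taken as above to produce the stated constant.
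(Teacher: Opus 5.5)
Your proposal is correct and follows essentially the paper's route: Lax--Milgram on $H^1(M,\mathcal{E})$ with the full norm, the $b_+$/trace bound $\|\mathrm{Tr}F\|^2\le\sigma\|F\|_{H^1}^2$, Young's inequality, and testing with $F_f$. Your Young parameter $\varepsilon=\sqrt{\alpha/\beta(s)}$ is exactly the paper's optimal choice $\delta^{*}=\sqrt{\beta(s)}/(|s_0|\sqrt{\alpha})$ under the correspondence $\varepsilon=1/(|s_0|\delta)$, so you get the same $A(\delta^{*})$, $B(\delta^{*})$ and the same final constant. Fixing $\varepsilon$ directly instead of optimizing over a family in $\delta$ is slightly cleaner, since it also covers $s_0=0$, where the paper's $\delta^{*}$ is undefined because of the division by $|s_0|$.
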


\begin{proof}
We will verify that the bilinear form $q_s^R$ satisfies the assumptions of Lax--Milgram lemma.
We have that
\[
\inner{{\rm Tr} (bF),{\rm Tr}(G)}_{2, \partial M}:=\int_{\partial M} \Sc{\overline{b F} G} \: \widetilde{\dvol}
\]
satisfies
\begin{align}\label{ESTIM_bound_TERMTAU}
\inner{{\rm Tr}(bF),{\rm Tr}(G)}_{2, \partial M} &\leq \Vert b\Vert_{L^\infty(\partial M)}\Vert {\rm Tr}F \Vert_{L^2(\partial M)}\Vert{\rm Tr}G\Vert_{L^2(\partial M)} \\
& \leq \sigma\Vert b\Vert_{L^\infty(\partial M)}\Vert F\Vert_{H^1(M)}\Vert G\Vert_{H^1(M)}.\nonumber
\end{align}
The first inequality follows from H\"older's inequality applied to the bounded multiplier $b\in L^{\infty}(\partial M)$ and the $L^{2}$ pairing of the boundary traces; the second inequality uses the continuity of the trace operator $\mathrm{Tr}\colon H^{1}(M)\to L^{2}(\partial M)$, whose operator norm is denoted $\sigma$, applied twice.

\medskip
The continuity of the bilinear form $q_s^R$ follows from the continuity of $q_s(F,G)$ as in (\ref{EQ:Form_gen_inner}) and from estimate (\ref{ESTIM_bound_TERMTAU}), so we have:
\begin{align*}
|q_s^R(F,G)| \leq C(\modulo{s},\modulo{s_0}, k_{\max}, \sigma, \Vert b\Vert_{L^\infty(\partial M)})\norm{F}_{H^1}\norm{G}_{H^1}, \quad \text{for all } F,G\in H^1(M,\mathcal{E}),
\end{align*}
where $k_{\max}:=\frac{1}{4}\max_{x\in M}\kappa(x)$.
Let us consider the coercivity of $q_s^R$. Setting $F=G$ in the form $q_s^R(F,G)$ and using Lemma \ref{LemmaDF64}, we have:
\begin{align}\label{Eq3:Coer_General_nonneg}
   q_s^R(F,F) & = \inner{\nabla^\tau F,\nabla^\tau F}_2 + \frac{1}{4} \inner{\kappa F,F}_2 - 2s_0\inner{DF ,F}_2 + |s|^2\inner{F ,F}_2 -\inner{b{\rm Tr} (F),{\rm Tr}(F)}_{2, \partial M}\nonumber \\
   & \geq \norm{\nabla^\tau F}_2^2 + k_{\min}\norm{F}_2^2 - 2|s_0|C_n\, \norm{\nabla^\tau F}_2\norm{F}_2 + |s|^2\norm{F}_2^2-\inner{b{\rm Tr} (F),{\rm Tr}(F)}_{2, \partial M}.
\end{align}
Since $b = b_+ + b_{-}$ (with $b_{+}=\max\{0,b\}\geq 0$ the positive part and $b_{-}=b-b_{+}\leq 0$ the negative part), we have that the last term satisfies:
\[
\inner{b{\rm Tr} (F),{\rm Tr}(F)}_{2, \partial M} \leq \inner{b_+{\rm Tr} (F),{\rm Tr}(F)}_{2, \partial M},
\]
since $b\leq b_{+}$ pointwise (as $b_{-}\leq 0$), and $|\mathrm{Tr}F|^{2}\geq 0$. This inequality is relevant because we need an upper bound for $\inner{b\mathrm{Tr}F,\mathrm{Tr}F}_{2,\partial M}$ in order to obtain a lower bound for the term $-\inner{b\mathrm{Tr}F,\mathrm{Tr}F}_{2,\partial M}$ in $q_{s}^{R}(F,F)$. Indeed,
\begin{equation}\label{Eq:Robin_Boundary_Inequality}
\begin{split}
-\inner{b\,\mathrm{Tr}F,\mathrm{Tr}F}_{2,\partial M} &\geq -\inner{b_{+}\mathrm{Tr}F,\mathrm{Tr}F}_{2,\partial M} \\
&\geq -\|b_{+}\|_{L^{\infty}(\partial M)}\,\|\mathrm{Tr}F\|_{L^{2}(\partial M)}^{2} \\
&\geq -\sigma\|b_{+}\|_{L^{\infty}(\partial M)}\,\|F\|_{H^{1}(M)}^{2},
\end{split}
\end{equation}
where the second step is H\"older with $b_{+}\geq 0$ and the third uses the trace inequality.
Then we get using Young's inequality and the trace inequality $\|\mathrm{Tr}F\|_{L^{2}(\partial M)}^{2}\leq\sigma\|F\|_{H^{1}(M)}^{2}$:
\begin{align}\label{Eq2:lower_bound_general_nonneg}
q_s^R(F,F) &\geq \|\nabla^\tau F\|_2^2 + k_{\min}\|F\|_2^2 - 2C_n \left(\frac{1}{2\delta}\|\nabla^\tau F\|_2^2 + \frac{\delta}{2}|s_0|^2\|F\|_2^2 \right) + |s|^2\|F\|_2^2 \nonumber \\
&\hspace{1.5cm}- \sigma\|b_+\|_{L^\infty(\partial M)} \|F\|_{H^1(M)}^2 \nonumber \\
& \geq \left(1-\frac{C_n}{\delta}\right) \|\nabla^\tau F\|_2^2 + \left(|s|^2 + k_{\min} - |s_0|^2 \delta C_n \right) \|F\|_2^2 \nonumber \\
& \quad - \sigma\|b_+\|_{L^\infty(\partial M)} \|F\|^2_2- \sigma\|b_+\|_{L^\infty(\partial M)}\|\nabla^\tau F\|^2_2.
\end{align}
Here Young's inequality is applied in the form $ab\leq a^{2}/(2\delta)+\delta b^{2}/2$, with $a=\|\nabla^{\tau}F\|_{2}$ and $b=|s_{0}|\,\|F\|_{2}$, which yields $$2|s_{0}|C_{n}\|\nabla^{\tau}F\|_{2}\|F\|_{2}\leq (C_{n}/\delta)\|\nabla^{\tau}F\|_{2}^{2}+\delta|s_{0}|^{2}C_{n}\|F\|_{2}^{2}.$$
On the boundary side, we split
\[
   \sigma\|b_{+}\|_{L^{\infty}(\partial M)}\,\|F\|_{H^{1}(M)}^{2}\;=\;\sigma\|b_{+}\|_{L^{\infty}(\partial M)}\,\|F\|_{2}^{2}\;+\;\sigma\|b_{+}\|_{L^{\infty}(\partial M)}\,\|\nabla^{\tau}F\|_{2}^{2},
\]
so that each summand can be absorbed into the corresponding $\|F\|_{2}^{2}$ or $\|\nabla^{\tau}F\|_{2}^{2}$ coefficient.
We then deduce
\begin{align}\label{Eq:lower_bound_general_nonneg11}
q_s^R(F,F) & \geq \left(1-\frac{C_n}{\delta}- \sigma\|b_+\|_{L^\infty(\partial M)}\right) \|\nabla^\tau F\|_2^2 + \left(|s|^2 + k_{\min}- \sigma\|b_+\|_{L^\infty(\partial M)} - |s_0|^2 \delta C_n \right) \|F\|_2^2.
\end{align}
The two coefficients into parenthesis in (\ref{Eq:lower_bound_general_nonneg11}) must be positive and so
\begin{equation}\label{xAandB_DELTA}
A(\delta):=1-\frac{C_n}{\delta}- \sigma\|b_+\|_{L^\infty(\partial M)}>0, \qquad
B(\delta):=|s|^2+k_{\min}- \sigma\|b_+\|_{L^\infty(\partial M)}  - |s_0|^2\delta C_n >0.
\end{equation}
Since $\delta$ and $C_n$ are positive constants, $A(\delta)$ cannot be positive if $1 - \sigma \|b_+\|_{L^\infty(\partial M)}$ is negative. Therefore, the following condition must hold:
\[
1 - \sigma \|b_+\|_{L^\infty(\partial M)}> 0.
\]
Observe that in compact notation
$$
A(\delta)=\alpha-C_{n}/\delta\ \ \ \text{and} \ \ \  B(\delta)=\beta(s)-|s_{0}|^{2}C_{n}\delta,
$$
where $\alpha$ and $\beta$ are defined in (\ref{alefbet}).
 Both are continuous, monotone functions of $\delta$ on $(0,+\infty)$: $A$ is strictly increasing (with $A'(\delta)=C_{n}/\delta^{2}>0$), while $B$ is strictly decreasing (with $B'(\delta)=-|s_{0}|^{2}C_{n}<0$ when $s_{0}\neq 0$). Moreover $A(\delta)\to-\infty$ as $\delta\to 0^{+}$ and $A(\delta)\to\alpha$ as $\delta\to+\infty$, while $B(\delta)\to\beta(s)$ as $\delta\to 0^{+}$ and $B(\delta)\to-\infty$ as $\delta\to+\infty$.
Similarly to the proof of Theorem~\ref{Th:Main_general}, by isolating $\delta$, when $s_0 \neq 0$ we find that
\begin{equation}\label{eq:range_delta_robin}
   \frac{C_n}{(1 - \sigma \|b_+\|_{L^\infty(\partial M)})} <\delta < \frac{|s|^2+k_{\min}- \sigma\|b_+\|_{L^\infty(\partial M)}}{C_n|s_0|^2}.
\end{equation}
In order for such a $\delta>0$ to exist, the upper bound must be bigger than the lower bound, which leads to
\[
(|s|^2+k_{\min}- \sigma\|b_+\|_{L^\infty(\partial M)})>\frac{C_n^2 s_0^2}{(1 - \sigma \|b_+\|_{L^\infty(\partial M)})}.
\]
The same inequality must hold when $s_0 = 0$.
Hence, we have proven that $q_s^{R}$ is coercive in $H^1(M,\mathcal{E})$ when conditions (\ref{STIME_SU_ESSE_ROBIN_B_POSIT}) are satisfied.
Fixing now any function $f\in L^2(M,\mathcal{E})$, we can consider the corresponding functional
\begin{equation*}
   \varphi_f(G) := \inner{f,G}_2, \quad G\in H^1(M,\mathcal{E})
\end{equation*}
as in Problem \ref{PROB_DIR_FULL_NORM_DIR}.
Then by Remark \ref{LINFUNCCONST}, this functional is bounded in $H^1(M,\mathcal{E})$, i.e.,
\begin{equation*}
   \modulo{\varphi_f(G)} = \modulo{\inner{f,G}_{L^2}} \leq \norm{f}_2\norm{G}_2 \leq \norm{f}_2\norm{G}_{H^1}, \quad G\in H^1(M,\mathcal{E}).
\end{equation*}
Hence, the assumptions of Lax--Milgram lemma are satisfied and so there exists a unique weak solution $F_f\in H^1(M,\mathcal{E})$ such that
\begin{equation} \label{Eqq:test_funct_general_nonneg}
   q_s^{R}(F_f,G) = \varphi_f(G) = \inner{f,G}_2, \quad \text{for all } G\in H^1(M,\mathcal{E}).
\end{equation}
For the $L^2$-estimate in (\ref{quatSresorig_ROBIN_A_POSIT}), we test (\ref{Eqq:test_funct_general_nonneg}) with $G = F_f$.
Still considering Problem \ref{PROB_DIR_FULL_NORM_ROBIN} we have the estimate
\begin{equation}\label{Eq:norm_sc_bound_general_nonneg_R}
   |q_s^{R}(F_f,F_f)| = |\inner{f,F_f}_2| \leq \norm{f}_2\norm{F_f}_2.
\end{equation}
Combining this inequality with the coercivity estimate we get
\begin{equation}\label{ESTIM_PSEUDO_RESOL_HHH}
   A(\delta)\norm{\nabla^\tau F_{f}}_2^2 +B(\delta)\norm{F_{f}}_2^2 \leq q_s^{R}(F_f,F_f) \leq \norm{f}_2\norm{F_f}_2.
\end{equation}
The leftmost inequality combines the coercivity estimate~\eqref{Eq:lower_bound_general_nonneg11} with the fact that $q_{s}^{R}(F_{f},F_{f})$ is real (since each of its summands is real, by symmetry of $D$ and the realness of $\kappa$ and of $b$) and, by the coercivity itself, non-negative on the range~\eqref{eq:range_delta_robin}, so that $|q_{s}^{R}(F_{f},F_{f})|=q_{s}^{R}(F_{f},F_{f})$.
We finally discuss the optimisation in $\delta$.
As in the proof of Theorem~\ref{Th:Main_general}, from~\eqref{ESTIM_PSEUDO_RESOL_HHH} we obtain the two pointwise estimates (in $\delta$ ranging in the interval~\eqref{eq:range_delta_robin}):
\[
   \norm{F_{f}}_{2} \;\leq\; \frac{\norm{f}_{2}}{B(\delta)}, \qquad \norm{\nabla^{\tau}F_{f}}_{2} \;\leq\; \frac{\norm{f}_{2}}{\sqrt{A(\delta)B(\delta)}}.
\]
The first follows by dropping the non-negative summand $A(\delta)\|\nabla^{\tau}F_{f}\|_{2}^{2}$ in~\eqref{ESTIM_PSEUDO_RESOL_HHH} and dividing by $\|F_{f}\|_{2}$; the second by dropping the non-negative summand $B(\delta)\|F_{f}\|_{2}^{2}$, then bounding $\|F_{f}\|_{2}$ by means of the first estimate, and finally taking the square root.
Recalling that $$S_{R}^{-1}(s,D)f = -(D-\overline{s}\mathcal{I})F_{f}$$ and using Lemma~\ref{LemmaDF64} we have
\[
   \|S_{R}^{-1}(s,D)f\|_{2} \;\leq\; C_{n}\,\|\nabla^{\tau}F_{f}\|_{2} + |s|\,\|F_{f}\|_{2} \;\leq\; \Bigl(\frac{C_{n}}{\sqrt{A(\delta)B(\delta)}} + \frac{|s|}{B(\delta)}\Bigr)\,\|f\|_{2}.
\]
We now have to choose the value of $\delta$ optimally. Since the bound holds for every admissible $\delta$, we minimise
\[
   G(\delta):=\frac{C_{n}}{\sqrt{A(\delta)B(\delta)}}+\frac{|s|}{B(\delta)}
\]
over the open interval~\eqref{eq:range_delta_robin}. Setting $G'(\delta)=0$ leads, after clearing denominators and squaring, to a quintic polynomial equation in $\delta$, exactly as observed in the proof of Theorem~\ref{Th:Main_general}, its roots are not available in closed form. We therefore proceed as in that proof and evaluate $G$ at the point $\delta^{*}$ that maximises $A(\delta)B(\delta)$, which by elementary calculus is
\[
   \delta^{*} \;=\; \sqrt{\frac{|s|^{2}+k_{\min}-\sigma\|b_{+}\|_{L^{\infty}(\partial M)}}{|s_{0}|^{2}\bigl(1-\sigma\|b_{+}\|_{L^{\infty}(\partial M)}\bigr)}} \;=\; \frac{\sqrt{\beta(s)}}{|s_{0}|\sqrt{\alpha}},
\]
with $\alpha$ and $\beta(s)$ as in~\eqref{alefbet}.
To verify now that, with $\gamma:=|s_{0}|^{2}C_{n}$ so that $B(\delta)=\beta-\gamma\delta$, we have
\[
   \frac{d}{d\delta}\bigl[A(\delta)B(\delta)\bigr]\;=\;A'(\delta)B(\delta)+A(\delta)B'(\delta)\;=\;\frac{C_{n}\beta}{\delta^{2}}\;-\;\alpha\gamma\;=\;\frac{C_{n}\beta(s)}{\delta^{2}}\;-\;\alpha\,|s_{0}|^{2}\,C_{n},
\]
which vanishes precisely at
$$
\delta=\delta^{*}=\sqrt{\beta(s)/(\alpha|s_{0}|^{2})}
$$
 the second derivative
 $$\frac{d^{2}}{d\delta^{2}}[AB]=-2C_{n}\beta(s)/\delta^{3}<0$$
 confirms a maximum.
 Both endpoints of~\eqref{eq:range_delta_robin} make $A(\delta)B(\delta)$ vanish
 (since $A$ vanishes at the left endpoint and $B$ at the right), so $\delta^{*}$ is the global maximiser. Furthermore $\delta^{*}$ lies inside the open interval~\eqref{eq:range_delta_robin}: the standing hypothesis $\alpha\beta(s)>C_{n}^{2}|s_{0}|^{2}$, that is $\sqrt{\alpha\beta}>C_{n}|s_{0}|$, gives both
\[
   \delta^{*}\;=\;\frac{\sqrt{\beta}}{|s_{0}|\sqrt{\alpha}}\;>\;\frac{C_{n}}{\alpha}\;=\;\frac{C_{n}}{1-\sigma\|b_{+}\|_{L^{\infty}(\partial M)}}\quad\text{(left endpoint)},
\]
and
\[
   \delta^{*}\;<\;\frac{\beta}{|s_{0}|^{2}C_{n}}\;=\;\frac{|s|^{2}+k_{\min}-\sigma\|b_{+}\|_{L^{\infty}(\partial M)}}{C_{n}|s_{0}|^{2}}\quad\text{(right endpoint)}.
\]
A direct computation gives
\[
   A(\delta^{*})\,B(\delta^{*}) \;=\; \bigl(\sqrt{\alpha\,\beta(s)} - C_{n}|s_{0}|\bigr)^{2}.
\]
Indeed, plugging $\delta^{*}=\sqrt{\beta}/(|s_{0}|\sqrt{\alpha})$ into $A$ and $B$:
\begin{align*}
   A(\delta^{*}) &\;=\; \alpha-\frac{C_{n}}{\delta^{*}}\;=\;\alpha-\frac{C_{n}\,|s_{0}|\sqrt{\alpha}}{\sqrt{\beta}}\;=\;\frac{\sqrt{\alpha}\,\bigl(\sqrt{\alpha\beta}-C_{n}|s_{0}|\bigr)}{\sqrt{\beta}},\\
   B(\delta^{*}) &\;=\; \beta-|s_{0}|^{2}C_{n}\,\delta^{*}\;=\;\beta-\frac{|s_{0}|C_{n}\sqrt{\beta}}{\sqrt{\alpha}}\;=\;\frac{\sqrt{\beta}\,\bigl(\sqrt{\alpha\beta}-C_{n}|s_{0}|\bigr)}{\sqrt{\alpha}},
\end{align*}
and the product simplifies to $(\sqrt{\alpha\beta}-C_{n}|s_{0}|)^{2}$. Under the standing hypothesis $\sqrt{\alpha\beta(s)} > C_{n}|s_{0}|$ (which is precisely the second condition in~\eqref{STIME_SU_ESSE_ROBIN_B_POSIT}, $\alpha\beta>C_{n}^{2}|s_{0}|^{2}$), taking positive square roots,
\[
   \sqrt{A(\delta^{*})\,B(\delta^{*})} \;=\; \sqrt{\alpha\,\beta(s)} - C_{n}|s_{0}|
\]
and similarly we get
\[
   B(\delta^{*}) \;=\; \sqrt{\frac{\beta(s)}{\alpha}}\,\bigl(\sqrt{\alpha\,\beta(s)} - C_{n}|s_{0}|\bigr).
\]
Substituting these values,
\begin{equation}\label{eq:proof_alphabeta_form}
   \|S_{R}^{-1}(s,D)f\|_{2} \;\leq\; G(\delta^{*})\,\|f\|_{2} \;=\; \frac{1}{\sqrt{\alpha\beta}-C_{n}|s_{0}|}\Bigl(C_{n}+|s|\sqrt{\frac{\alpha}{\beta}}\Bigr)\,\|f\|_{2}.
\end{equation}
Finally, multiplying numerator and denominator of the right-hand side of~\eqref{eq:proof_alphabeta_form} by $\sqrt{\beta(s)}$ and using $\sqrt{\alpha\beta}\,\sqrt{\beta}=\beta\sqrt{\alpha}$, we obtain the single fraction
\[
   \|S_{R}^{-1}(s,D)f\|_{2} \;\leq\;
   \frac{C_{n}\,\sqrt{\beta(s)}+|s|\,\sqrt{\alpha}}{\beta(s)\,\sqrt{\alpha}-C_{n}|s_{0}|\,\sqrt{\beta(s)}}\,\|f\|_{2},
\]
which is precisely \eqref{quatSresorig_ROBIN_A_POSIT}.
\end{proof}

\begin{remark}[Recovering estimates for homogeneous Neumann problem]
Substituting the explicit expressions \eqref{alefbet} for $\alpha$ and $\beta(s)$ into the estimate \eqref{quatSresorig_ROBIN_A_POSIT}, and writing for brevity
\[
   \beta_{\sigma} \;:=\; \sigma\|b_{+}\|_{L^{\infty}(\partial M)},
\]
so that $\alpha= 1-\beta_{\sigma}$ and $\beta(s) = |s|^{2}+k_{\min}-\beta_{\sigma}$, the bound \eqref{quatSresorig_ROBIN_A_POSIT} reads explicitly
\begin{equation}\label{quatSresorig_ROBIN_subst_single}
   \|S_R^{-1}(s,D)f\|_2 \;\leq\;
   \frac{C_{n}\,\sqrt{|s|^{2}+k_{\min}-\beta_{\sigma}}\;+\;|s|\,\sqrt{1-\beta_{\sigma}}}
        {\sqrt{1-\beta_{\sigma}}\,\bigl(|s|^{2}+k_{\min}-\beta_{\sigma}\bigr)\;-\;C_{n}|s_{0}|\,\sqrt{|s|^{2}+k_{\min}-\beta_{\sigma}}}
   \;\|f\|_2 .
\end{equation}
In particular, in the homogeneous Neumann problem $\nabla_N^\tau F\vert_{\partial M}=0$, for the case $b\equiv 0$ (whence $\beta_{\sigma}=0$, $\alpha=1$ and $\beta(s)=|s|^{2}+k_{\min}$), in \eqref{quatSresorig_ROBIN_subst_single} reduces to
\[
   \|S_R^{-1}(s,D)f\|_2 \;\leq\;
   \frac{C_{n}\,\sqrt{|s|^{2}+k_{\min}}+|s|}{\,|s|^{2}+k_{\min}-C_{n}|s_{0}|\,\sqrt{|s|^{2}+k_{\min}}\,}\;\|f\|_2,
\]
which can equivalently be written as
\[
   \|S_{R}^{-1}(s,D)f\|_{2} \leq \frac{1}{\sqrt{|s|^{2}+k_{\min}}-C_{n}|s_{0}|}\Bigl(C_{n}+\frac{|s|}{\sqrt{|s|^{2}
   +k_{\min}}}\Bigr)\|f\|_{2}.
\]
This bound in $H^1(M,\mathcal{E})$ is precisely the estimate  \eqref{quatSresorig_ROBIN_A_POSIT} of Theorem~\ref{Th:Main_general}.
\end{remark}

\begin{remark}
The existence of the $S$-resolvent operator under Robin boundary conditions depends on the parameter
\[
\Lambda := k_{\min} - \sigma\|b_+\|_{L^\infty(\partial M)},
\]
that we call the effective curvature bound. If in addition
the estimate holds
$$1 - \sigma\|b_+\|_{L^\infty(\partial M)}>0$$
the $S$-resolvent set contains all points $s = s_0 + s_1 J$ such that
\[
(|s|^2 +k_{\min}- \sigma\|b_{+}\|_{L^\infty(\partial M)}) - \frac{|s_0|^2 C_n^2}{1- \sigma\|b_{+}\|_{L^\infty(\partial M)}} >0.
\]
Similarly to the Dirichlet boundary conditions, we have three qualitatively different behaviours depending on the sign of the effective curvature bound $\Lambda$. The qualitative picture of the resolvent set is analogous to Figure~\ref{fig:resolvent_sets} taking $\Lambda$ into account.
\end{remark}

\section{Concluding remarks}

The estimates established in this work show the bi-sectoriality of the spinor Dirac operator within the spectral theory on the $S$-spectrum.
These estimates are crucial in the sense that they are the bounds required by the $S$-resolvent operators to define the $H^{\infty}$-functional calculus in this setting.
We recall that the $H^{\infty}$-functional calculus for complex operators on Banach spaces, introduced by A.~McIntosh \cite{McI1} and developed in \cite{MC06, MC10, MC97, MC98}, is based on resolvent estimates of similar type. This calculus is important across many fields, and for comprehensive treatments we refer to the monographs \cite{Haase, HYTONBOOK1, HYTONBOOK2}.

\medskip
The spectral theory on the $S$-spectrum is based on slice hyperholomorphic functions and has its own $H^{\infty}$-functional calculus \cite{ACS2016, AlpayColSab2020, CGK, CGdiffusion2018, ColSab2006, ColomboSabadiniStruppa2011, FJBOOK, JONADIRECT, JONAMEM}. Parallel to it, in the Clifford setting, there is also a spectral theory based on the monogenic spectrum \cite{JM}, inspired by monogenic functions \cite{DSS}, that also possesses a corresponding $H^{\infty}$-functional calculus, see the monographs \cite{JBOOK, TAOBOOK}.

\medskip
{\em From the gradient with non-constant coefficients to the Dirac operator.}
A central motivation for proving the bi-sectoriality estimates is to extend, to the spinor Dirac operator, investigations previously carried out for the gradient operator with non-constant coefficients.
Precisely, let $e_1,\dots,e_n$ be the imaginary units of the Clifford algebra $\mathbb{R}_n$ and assume that the function $a_1,\dots,a_n:\Omega\to\mathbb{R}$ satisfy suitable regularity and growth conditions on a smooth domain $\Omega\subseteq\mathbb{R}^n$.
In \cite{Gradient}  was proved the bi-sectoriality estimates for the operator
\begin{equation}\label{Eq_T}
   \nabla_a = \sum_{i=1}^{n} e_i\, a_i(x)\, \frac{\partial}{\partial x_i}, \qquad x\in\Omega,
\end{equation}
 under various boundary conditions and thanks to the theory
developed in \cite{Quadratic, mantovani, bisectorial}) it was possible to define the $H^\infty$-functional calculus that include the fractional powers of operators.  The gradient $\nabla_a$ with non-constant coefficients represents the Fourier (or Fick) law for diffusion in non-homogeneous materials, and the spectral theory on the $S$-spectrum allows one to define the corresponding fractional diffusion law through the fractional powers of $\nabla_a$.
For higher-order operators some results are contained in the paper \cite{BARACCO}.

\medskip
We point out a fundamental difference between complex and Clifford setting regarding fractional powers. Also in this noncommutative setting, fractional powers generate non-local operators, but their definition requires overcoming obstructions that do not appear in the classical complex spectral theory and that are due to the non-commutative setting. The obstruction consists on the fact that the function $s\mapsto s^{\alpha}$ is not slice hyperholomorphic on the negative real axis, and in the hypercomplex setting of $\mathbb{R}_n$ this axis of non-holomorphicity cannot be rotated away.
 For the fractional powers of Clifford operators, the obstruction has been overcome in \cite{fracPQ} by means of the pair of functions
\begin{equation}\label{Eq_palpha_qalpha}
   p_\alpha:=\begin{cases} s^\alpha, & {\rm Sc}(s)>0, \\ -(-s)^\alpha, & {\rm Sc}(s)<0, \end{cases}\qquad
   q_\alpha:=\begin{cases} s^\alpha, & {\rm Sc}(s)>0, \\ (-s)^\alpha, & {\rm Sc}(s)<0, \end{cases}
\end{equation}
where $s^\alpha:=e^{\alpha\ln(s)}$ for ${\rm Sc}(s)>0$, with $\ln(s):=\ln|s|+J\arg(s)$, $J\in\mathbb{S}$ such that $s\in\mathbb{C}_J$ and $\arg(s)\in(-\frac{\pi}{2},\frac{\pi}{2})$. Since $q_\alpha=(s^2)^{\frac{\alpha}{2}}$, it merely rewrites the classical fractional power of $s^2$, whereas $p_\alpha$ is the more natural notion, scaling the angle to the positive (resp.\ negative) real axis by $\alpha$ when ${\rm Sc}(s)>0$ (resp.\ ${\rm Sc}(s)<0$). The fractional powers $p_\alpha(\mathcal{T})$ and $q_\alpha(\mathcal{T})$ of a bisectorial Clifford operator $\mathcal{T}$ are then defined precisely through the $H^{\infty}$-functional calculus.
We remark that the definition of the fractional powers $p_\alpha(\mathcal{T})$ is not artificial  because for $\alpha=1$ if we apply the $H^\infty$-functional calculus to the classical gradient operator $\nabla$ we get $p_1(\nabla)=\nabla$, taking also care of the domain of the gradient operator since it is unbounded. This definition is clearly a valid extension of the notion of fractional powers for unbounded Clifford operators.

\medskip
The bi-sectoriality estimates obtained in the present paper are crucial in exactly the same sense.
Using the Clifford $H^{\infty}$-functional calculus for the Dirac operators
we can define the fractional powers $p_\alpha$ and $q_\alpha$
 for the spinor Dirac operator. In this way, the entire construction developed in \cite{Gradient} for $\nabla_a$ bi-sectoriality, $H^{\infty}$-functional calculus, and fractional powers generalizes from the gradient to the Dirac operator.

\medskip
{\em Further applications of the $S$-spectrum theory.}
We close by recalling other directions in which this spectral theory has proved central.

\smallskip
\noindent\emph{Quaternionic quantum mechanics.} The $S$-spectrum originated in the search for an appropriate notion of spectrum for the quaternionic operators of quantum mechanics, in the formulation of G.~Birkhoff and J.~von Neumann \cite{BF};\ see \cite{adler, JONAQSTUD}. Its identification was achieved using techniques from hypercomplex analysis, see the introduction in \cite{CGK} for the explanations.

\smallskip
\noindent\emph{Vector analysis.} Operators of vector analysis such as the gradient $\nabla_a$ in \eqref{Eq_T} fit naturally within this theory, as discussed above.

\smallskip
\noindent\emph{Fine structures on the $S$-spectrum.} A more recent branch rests on the factorization of the second operator in the Fueter-Sce extension theorem \cite{ColSabStrupSce, Fueter, Sce, TaoQian1}. These \emph{fine structures} concern classes of functions defined by specific integral representations and their associated functional calculi;\ they were initiated in \cite{BANJOUR, CDPS1, Fivedim, Polyf1, Polyf2}, with recent $H^{\infty}$-calculus advances in \cite{MILANJPETER, MPS23, Quadratic}. The Fueter-Sce theorem gives $D\,\Delta_{n+1}^{(n-1)/2}f(x)=0$ for all slice hyperholomorphic functions on an open set $U\subseteq\mathbb{R}^{n+1}$, where $D$ is the Euclidean Dirac operator. For odd $n$, the operator $\Delta_{n+1}^{(n-1)/2}$ factorizes through operators of the form
\begin{equation}\label{T1}
   D^{\beta}\,\Delta_{n+1}^{m}, \qquad\text{and}\qquad \overline{D}^{\beta}\,\Delta_{n+1}^{m}, \qquad \beta, m\in\mathbb{N},
\end{equation}
with $\overline{D}$ the conjugate Dirac operator. The Dirac fine structures on the $S$-spectrum are, roughly, suitable classes of functions in the kernels of the operators \eqref{T1} factorizing $\Delta_{n+1}^{(n-1)/2}$, together with their functional calculi;\ the operators \eqref{T1} are themselves typical examples to which the spectral theory on the $S$-spectrum applies.

\section*{Declarations and statements}

{\bf Data availability}. The research in this paper does not imply use of data.

{\bf Conflict of interest}. The authors declare that there is no conflict of interest.


\begin{thebibliography}{99}

\bibitem{adler} S. Adler: \textit{Quaternionic Quantum Mechanics and Quaternionic Quantum Fields}. Volume 88 of \textit{International Series of Monographs on Physics}. Oxford University Press, New York (1995).

\bibitem{ACK} D. Alpay, F. Colombo, D.P. Kimsey: \textit{The spectral theorem for quaternionic unbounded normal operators based on the $S$-spectrum}. J. Math. Phys. \textbf{57}(2), 023503, 27 p. (2016).

\bibitem{ACS2016} D. Alpay, F. Colombo, I. Sabadini: \textit{Slice hyperholomorphic Schur analysis}. Volume 256 of Operator Theory: Advances and Applications, Basel, Birkhäuser/Springer, xii, 362 p. (2016).


\bibitem{AlpayColSab2020} D. Alpay, F. Colombo, I. Sabadini: \textit{Quaternionic de Branges spaces and characteristic operator function}. SpringerBriefs in Mathematics, Springer, Cham x, 116 p. (2020).

\bibitem{AlpayColSab2024} D. Alpay, F. Colombo, I. Sabadini: \textit{Quaternionic Hilbert spaces and slice hyperholomorphic functions}. Operator Theory: Advances and Applications 304. Cham: Birkhäuser, xi, 348 p. (2024).


\bibitem{MC10} P. Auscher, A. Axelsson, A. McIntosh: \textit{On a quadratic estimate related to the Kato conjecture and boundary value problems}. Harmonic analysis and partial differential equations, Contemp. Math., 505, Amer. Math. Soc., Providence, RI 105-129 (2010).

\bibitem{MC97} P. Auscher, A. McIntosh, A. Nahmod: \textit{Holomorphic functional calculi of operators, quadratic estimates and interpolation}. Indiana Univ. Math. J. \textbf{46} 375--403 (1997).

\bibitem{MC06} A. Axelsson, S. Keith, A. McIntosh: \textit{Quadratic estimates and functional calculi of perturbed Dirac operators}. Invent. Math. \textbf{163} 455--497 (2006).




\bibitem{Bar91}
C.~Bär: \textit{Das Spektrum von Dirac-Operatoren}. Dissertation, Rheinische Friedrich-Wilhelms-Universität Bonn, Bonn, 1990; Bonner Mathematische Schriften, \textbf{217}, Universität Bonn, Mathematisches Institut, Bonn, 124 pp. (1991).


\bibitem{BARACCO} L. Baracco, F. Colombo, M.M. Peloso, S. Pinton: \textit{Fractional powers of higher-order vector operators on bounded and unbounded domains}. Proc. Edinb. Math. Soc. \textbf{4}(2) 65, 912-937 (2022).

\bibitem{Behzadan2018} A. Behzadan and M. Holst: \textit{Sobolev-slobodeckij spaces on compact manifolds, revisited}, 2018.

\bibitem{beschastnyi2025s} I. Beschastnyi, F. Colombo, S. A. Lucas, I. Sabadini: \textit{The $S$-resolvent estimates for the dirac operator on hyperbolic and spherical spaces}. J. Geom. Anal. 36 (2026), no. 5, Paper No. 177.

\bibitem{BF} G. Birkhoff, J. von Neumann: \textit{The logic of quantum mechanics}. Ann. of Math. (2) \textbf{37}(4) 823--843 (1936).

\bibitem{BREZIS} H. Brezis: \textit{Functional analysis, Sobolev spaces and partial differential equations}. Universitext. Springer, New York, xiv+599 p. (2011).




\bibitem{Bbw93}
B.~Booß-Bavnbek and K.~P. Wojciechowski: \textit{Elliptic boundary problems for Dirac operators}. Mathematics: Theory \& Applications, Birkhäuser Boston, Inc., Boston, MA, 307 pp. (1993).




\bibitem{BANJOUR} F. Colombo, A. De Martino, S. Pinton: \textit{Harmonic and polyanalytic functional calculi on the $S$-spectrum for unbounded operators}. Banach J. Math. Anal. \textbf{17}, no. 4, Paper No. 84 41 p. (2023).

\bibitem{CDPS1} F. Colombo, A. De Martino, S. Pinton, I. Sabadini: \textit{Axially harmonic functions and the harmonic functional calculus on the $S$-spectrum}. J. Geom. Anal. \textbf{33}(2) 54 p. (2023).

\bibitem{Fivedim} F. Colombo, A. De Martino, S. Pinton, I. Sabadini: \textit{The fine structure of the spectral theory on the $S$-spectrum in dimension five}. J. Geom. Anal. \textbf{33}, no. 9, Paper No. 300 73 p. (2023).


\bibitem{CGdiffusion2018} F. Colombo. J. Gantner: \textit{An application of the $S$-functional calculus to fractional diffusion processes}. Milan J. Math. \textbf{86}(2), 225--303 (2018).

\bibitem{FJBOOK} F. Colombo, J. Gantner: \textit{Quaternionic closed operators, fractional powers and fractional diffusion processes}. Operator Theory: Advances and Applications \textbf{274}, Birkhäuser/Springer, Cham viii+322 (2019).

\bibitem{CGK} F. Colombo, J. Gantner, D.P. Kimsey: \textit{Spectral theory on the $S$-spectrum for quaternionic operators}. Volume 270 of Operator Theory: Advances and Applications, Birkhäuser/Springer, Cham ix+356 (2018).

\bibitem{ADVCGKS} F. Colombo, J. Gantner, D.P. Kimsey, I. Sabadini: \textit{Universality property of the $S$-functional calculus, noncommuting matrix variables and Clifford operators}. Adv. Math. \textbf{410}, Paper No. 108719, 39 p. (2022).

\bibitem{ColKim} F. Colombo, D.P. Kimsey: \textit{The spectral theorem for normal operators on a Clifford module}. Anal. Math. Phys., \textbf{12}(1), Paper No. 25, 92 (2022).

\bibitem{Gradient} F. Colombo, F. Mantovani, P. Schlosser: \textit{Spectral properties of the gradient operator with nonconstant coefficients.} Anal. Math. Phys. 14, no. 5, Paper No. 108, 31 p. (2024).

    \bibitem{Quadratic}
     F. Colombo, F. Mantovani, P. Schlosser: \textit{Quadratic estimates for the $H^\infty$-functional calculus of bisectorial Clifford operators}. J. Geom. Anal. 36 (2026), no. 2, Paper No. 46, 31 pp.

       \bibitem{fracPQ}
F. Colombo, F. Mantovani, P. Schlosser: \textit{
Nonlocal Fourier Laws for Heat Propagation via Fractional powers of Vector Operators.}
Preprint arXiv:2604.13214.


\bibitem{MILANJPETER} F. Colombo, S. Pinton, P. Schlosser: \textit{The $H^\infty$-functional calculi for the quaternionic fine structures of Dirac type}. Milan J. Math., \textbf{92}, no. 1, 73-122 (2024).

\bibitem{ColSabStrupSce} F. Colombo, I. Sabadini, D.C. Struppa: \textit{Michele Sce's works in hypercomplex analysis -- A translation with commentaries}. Birkh\"auser/Springer, Cham (2020).

\bibitem{ColomboSabadiniStruppa2011} F. Colombo, I. Sabadini, D.C. Struppa: \textit{Noncommutative functional calculus. Theory and applications of slice hyperholomorphic functions}. Volume 289 of Progress in Mathematics, Birkhäuser/Springer Basel AG, Basel vi+221 p. (2011).

\bibitem{ColSab2006} F. Colombo, I. Sabadini, D.C. Struppa: \textit{A new functional calculus for noncommuting operators}. J. Funct. Anal. \textbf{254}(8), 2255--2274 (2008).

\bibitem{Cno02}
J.~Cnops: \textit{An introduction to Dirac operators on manifolds}. Progress in Mathematical Physics, \textbf{24}, Birkhäuser Boston, Inc., Boston, MA, 211 pp. (2002).


\bibitem{Polyf1} A. De Martino, S. Pinton: \textit{A polyanalytic functional calculus of order 2 on the $S$-spectrum}. Proc. Amer. Math. Soc. \textbf{151}, 2471--2488 (2023).

\bibitem{Polyf2} A. De Martino, S. Pinton: \textit{Properties of a polyanalytic functional calculus on the $S$-spectrum}. Math. Nachr. \textbf{296} 5190-5226 (2023).

\bibitem{MPS23} A. De Martino, S. Pinton, P. Schlosser: \textit{The harmonic $H^\infty$-functional calculus based on the $S$-spectrum}. J. Spectr. Theory. \textbf{14}, no. 1, 121-162 (2024).

\bibitem{DSS} R. Delanghe, F. Sommen, V. Sou\v cek: \textit{Clifford algebra and spinor-valued functions.} Mathematics and its Applications \textbf{53}, Kluwer Academic Publishers Group, Dordrecht (1992).


\bibitem{Esp98}
G.~Esposito: \textit{Dirac operators and spectral geometry}. Cambridge Lecture Notes in Physics, \textbf{12}, Cambridge University Press, Cambridge, 209 pp. (1998).



\bibitem{Fri00}
T.~Friedrich: \textit{Dirac operators in Riemannian geometry}. Translated from the 1997 German original by Andreas Nestke, Graduate Studies in Mathematics, \textbf{25}, American Mathematical Society, Providence, RI, 195 pp. (2000).



\bibitem{MC98} E. Franks, A. McIntosh: \textit{Discrete quadratic estimates and holomorphic functional calculi in Banach spaces}. Bull. Austral. Math. Soc. \textbf{58}, 271--290 (1998).

\bibitem{Fueter} R. Fueter: \textit{Die Funktionentheorie der Differentialgleichungen $\Delta u=0$ und $\Delta\Delta u=0$ mit vier reellen Variablen}. Comment. Math. Helv. \textbf{7}(1), 307--330 (1934).

\bibitem{JONAMEM} J. Gantner: \textit{Operator theory on one-sided quaternion linear spaces: intrinsic S-functional calculus and spectral operators.} Mem. Amer. Math. Soc. \textbf{267}, no. 1297, iii+101 p. (2020).

\bibitem{JONADIRECT} J. Gantner: \textit{A direct approach to the $S$-functional calculus for closed operators}. J. Operator Theory. \textbf{77} no.2, 287-331 (2017).

\bibitem{JONAQSTUD} J. Gantner: \textit{On the equivalence of complex and quaternionic quantum mechanics}. Quantum Stud. Math. Found. 5, No. 2, 357-390 (2018).

\bibitem{gilbert1991clifford} J. E. Gilbert and M. Murray: \textit{Clifford algebras and Dirac operators in harmonic analysis}. Number 26. Cambridge University Press, 1991.


\bibitem{Gin09}
N.~Ginoux: \textit{The Dirac spectrum}. Lecture Notes in Mathematics, \textbf{1976}, Springer-Verlag, Berlin, 156 pp. (2009).


\bibitem{Haase} M. Haase: \textit{The functional calculus for sectorial operators}. Operator Theory: Advances and Applications \textbf{169}, Birkhäuser Verlag, Basel (2006).

\bibitem{HYTONBOOK1} T. Hytönen, J. van Neerven, M. Veraar, L. Weis: \textit{Analysis in Banach spaces. Vol. II. Probabilistic methods and operator theory}. Ergebnisse der Mathematik und ihrer Grenzgebiete. 3. Folge. A Series of Modern Surveys in Mathematics \textbf{67}, Springer, Cham xxi+616 p. (2017).

\bibitem{HYTONBOOK2} T. Hytönen, J. van Neerven, M. Veraar, and L. Weis: \textit{Analysis in Banach spaces. Vol. I. Martingales and Littlewood-Paley theory}. Ergebnisse der Mathematik und ihrer Grenzgebiete. 3. Folge. A Series of Modern Surveys in Mathematics \textbf{63}, Springer, Cham xvi+614 p. (2016).


\bibitem{Hua06}
J.-S. Huang and P.~Pandžić: \textit{Dirac operators in representation theory}. Mathematics: Theory \& Applications, Birkhäuser Boston, Inc., Boston, MA, 199 pp. (2006).

    \bibitem{mantovani}
     F. Mantovani: \textit{Slice hyperholomorphicity of the S-resolvent operators and boundary conditions}. Complex Anal. Oper. Theory 20 (2026), no. 3, Paper No. 66, 24 pp.

     \bibitem{bisectorial}
     F. Mantovani, P. Schlosser: \textit{The $H^\infty$-functional calculus for bisectorial Clifford operators}.
     J. Spectr. Theory 15 (2025), no. 2, 751–818.


\bibitem{JBOOK} B. Jefferies: \textit{Spectral properties of noncommuting operators}. Volume 1843, Lecture Notes in Mathematics. Springer-Verlag, Berlin (2004).

\bibitem{JM} B. Jefferies, A. McIntosh, J. Picton-Warlow:\textit{The monogenic functional calculus}. Studia Math. \textbf{136}, 99-119 (1999).

\bibitem{lawsonmichelson_spingeometry} H. B. Lawson and M. L. Michelsohn: \textit{Spin Geometry (PMS-38)}. Princeton University Press, 1989.

\bibitem{lee2012introduction} J. Lee: \textit{Introduction to Smooth Manifolds}. Graduate Texts in Mathematics. Springer New York, 2012.

\bibitem{TAOBOOK} P. Li, T. Qian: \textit{Singular integrals and Fourier theory on Lipschitz boundaries}. Science Press Beijing, Beijing, Springer, Singapore xv+306 p. (2019).


\bibitem{M87} J. Marschall: \textit{The trace of Sobolev-Slobodeckij spaces on Lipschitz domains}. Manuscripta Math. \textbf{58}, 47-65 (1987).

\bibitem{McI1} A. McIntosh: \textit{Operators which have an $H^\infty$ functional calculus}. Miniconference on operator theory and partial differential equations (North Ryde, 1986), 210--231, Proc. Centre Math. Anal. Austral. Nat. Univ., 14, Austral. Nat. Univ., Canberra (1986).


\bibitem{TaoQian1} T. Qian: \textit{Generalization of Fueter's result to $\textbf{R}^{n+1}$}. Atti Accad. Naz. Lincei Cl. Sci. Fis. Mat. Natur. Rend. Lincei (9) Mat. Appl. \textbf{8}(2), 111--117 (1997).


\bibitem{Rod16}
W.~A. Rodrigues, Jr. and E.~Capelas de Oliveira: \textit{The many faces of Maxwell, Dirac and Einstein equations. A Clifford bundle approach}. Second edition, Lecture Notes in Physics, \textbf{922}, Springer, [Cham], 587 pp. (2016).

\bibitem{Sce} M. Sce: \textit{Osservazioni sulle serie di potenze nei moduli quadratici}. Atti Accad. Naz. Lincei Rend. Cl. Sci. Fis. Mat. Nat. \textbf{23}(8), 220--225 (1957).





\end{thebibliography}
\end{document}